\newtheorem{theorem}{Theorem} 
\newtheorem{lemma}[theorem]{Lemma} 
\newtheorem{proposition}[theorem]{Proposition}
\newcommand{\Z}{{\mathbb Z}} 
\newcommand{\R}{{\mathbb R}} 
\newcommand{\C}{{\mathbb C}} 
\newcommand{\N}{{\mathbb N}}
\renewcommand{\L}{{\mathbb L}}
\renewcommand{\H}{{\mathbb H}}
\newcommand{\E}{{\mathbb E}}
\newcommand{\PP}{{\mathbb P}}
\newcommand{\U}{{\mathcal U}}
\newcommand{\dd}{{\rm d}}
\providecommand{\norm}[1]{\left\lVert#1\right\rVert}
\DeclareMathOperator{\sech}{sech}
\title[]{Lie--Trotter splitting for the nonlinear stochastic Manakov system}
\date{\today}
\author{Andr\'e Berg}
\address{Department of Mathematics and Mathematical Statistics,
  Ume{\aa} University, SE--901~87~Ume{\aa}, Sweden}
\email{andre.berglund@umu.se}
\author{David Cohen}
\address{University of Gothenburg, Chalmers, Sweden}
\email{david.cohen@chalmers.se}
\author{Guillaume Dujardin}
\address{Inria, Univ. Lille, CNRS, UMR 8524 - Laboratoire Paul Painlev{\'e} F-59000}
\email{guillaume.dujardin@inria.fr}
\begin{document}
	
	
	\begin{abstract}
          This article analyses the convergence of the Lie--Trotter splitting
          scheme for the 
		stochastic Manakov equation, a system arising in the study of pulse propagation 
		in randomly birefringent optical fibers. 
		First, we prove that the strong order of the numerical approximation is $1/2$ 
		if the nonlinear term in the system is globally Lipschitz. 
		Then, we show that the splitting scheme has convergence order $1/2$ 
		in probability and almost sure order $1/2^-~$ in the case of a cubic nonlinearity. 
		We provide several numerical experiments illustrating
                the aforementioned results and the efficiency
                  of the Lie--Trotter splitting scheme.
                  Finally, we numerically investigate the possible blowup of solutions for 
                  some power-law nonlinearities.
	\end{abstract}
	
	
	\maketitle
	\noindent 
		{\bf AMS Classification.} 65C30. 65C50. 65J08. 60H15. 60M15. 60-08. 35Q55
		
		\bigskip\noindent{\bf Keywords.} Stochastic partial differential equations. 
		Stochastic Manakov equation. 
		Coupled system of stochastic nonlinear Schr\"odinger equations. 
		Numerical schemes. Splitting scheme. Lie--Trotter scheme. 
		Strong convergence. Convergence in probability. Almost sure convergence. Convergence rates. Blowup.  
		
		\section{Introduction}\label{sec-intro}
		
		%
		
		The Internet and its many areas of applications and dependencies create 
		a huge demand for faster optical communication systems. 
		One of the current limiting factors of high bit rate transmissions   
		is dispersive effects which accumulate over long distances \cite{MR3166967}.
		One of these limiting factors is due to polarization mode dispersion (PMD) which 
		follows from birefringence in the optical fibers. 
		This effect in turn may vary due to {\it e.\,g.} 
		core geometry, non-uniform anisotropy, 
		or mechanical distortions from point-like pressure or twisting. 
		These restrictive factors can together be modeled as random influences 
		leading to the Manakov PMD equation and its limiting 
		equation, the stochastic Manakov equation,   
		see for instance \cite{WaiMenyak,MR3024974} for details. 
		A precise definition of the stochastic Manakov equation is given below. 
		This stochastic partial differential equation (SPDE) thus serves as a model to 
		study long distance light propagation in random optical fibers.
		
		
		Let us now discuss recent literature on the numerical analysis of the stochastic Manakov equation. 
		The work \cite{Gazeau:13} (see also \cite{GazeauPhd})
		numerically studies the impact of noise on Manakov solitons and soliton 
		wave-train propagation by the following time integrators: 
		the nonlinearly implicit Crank--Nicolson scheme, 
		the linearly implicit relaxation scheme, 
		and an explicit split-step scheme (the Lie--Trotter scheme). 
		The paper \cite{MR3166967} (see also \cite{GazeauPhd}) proves that the order of convergence in probability of 
		the Crank--Nicolson scheme is $1/2$. In addition, it is shown that this numerical integrator 
		preserves the $\L^2$-norm as does the exact solution 
        to the stochastic Manakov equation (see below for details). 
		Furthermore, it is numerically observed in the reference \cite{MR3166967} that 
		the almost-sure order of convergence of the relaxation scheme and the split-step 
		scheme is $1/2^-$. To the best of our knowledge, no proofs for these orders 
		of convergence exist. 
		Finally, the recent reference \cite{bcd20} proves, 
		among other things, that the order of convergence in probability 
		of an exponential integrator is $1/2$. 
		
		The main goal of this article is to analyse a
		linearly implicit version of the Lie--Trotter integrator for an efficient  
		time integration of the stochastic Manakov system. This numerical integrator is an application of the 
		classical deterministic Lie--Trotter splitting from \cite{MR0108732} 
		to the present stochastic setting. The outline of the paper is as follows. 
		The numerical integrator is described in Section~\ref{sec-lt}.
                In Section~\ref{sec-conv}, we theoretically confirm that this time integrator applied
                to the Manakov system with a truncated Lipschitz nonlinear term 
		has the same order of convergence as that of the nonlinearly implicit Crank--Nicolson 
		scheme from \cite{MR3166967} and that of the exponential integrator from \cite{bcd20}.
                 This is achieved in Theorem~\ref{thm:LTSplConvStrong}, which
                  is the main theoretical result of this paper.
                 As a consequence, we prove in Section~\ref{sec:convprob}
                  that the order of convergence of the scheme applied to the Manakov system
                  with untruncated nonlinearity is $1/2^-$ in probability
                  (see Proposition \ref{propProba}) and almost surely (see Proposition~\ref{prop-as}).
		    Finally, Section~\ref{sec-numexp} is devoted to numerical experiments.
              In particular, we illustrate numerically the strong order of convergence of the scheme
              applied to the Manakov system,
              its order of convergence in probability and its order of almost-sure convergence.
              In addition to comparing its qualitative properties ($\L^2$-norm preservation)
              and its computational cost with other numerical methods from the literature
              applied to the stochastic Manakov equation,
              we use this new scheme to investigate the existence of a critical power-law exponent
              for the stochastic Manakov system, which is a theoretical open problem \cite{MR3166967} at the
              time of writing. 
              
            \section{A Lie--Trotter scheme for the nonlinear stochastic Manakov system}\label{sec-lt}
            In this section, we set notation, we introduce the stochastic Manakov equation
              and the Lie--Trotter splitting scheme that we analyse and use in the
            next sections.
            
		Let $(\Omega, \mathcal{F},\mathbb{P})$ be a probability space on which a three-dimensional 
		standard Brownian motion $W(t):=(W_1(t), W_2(t), W_3(t))$ is defined. 
		We endow this probability space with the 
		complete filtration $\mathcal{F}_t$ generated by $W(t)$. 
		
		Following \cite{MR3166967}, we write the nonlinear stochastic Manakov system as
		\begin{equation}
		\label{eq:Manakov}
		i\text{d}X
		+\partial^2_x X \,\text{d}t
		+ i \sqrt{\gamma} \sum_{k=1}^{3} \sigma_k \partial_x X \circ \text{d} W_k
		+ |X|^2 X \,\text{d}t= 0,
		\end{equation}
		where $X=X(t,x)=(X_1,X_2)$ is the unknown vector-valued function with values in $\C^2$, 
		$\circ$ denotes the Stratonovich product, 
		$\gamma\geq 0$ measures the intensity of the noise, 
		$|X|^2=|X_1|^2+|X_2|^2$ is the nonlinear coupling, 
		and $\sigma_1$, $\sigma_2$ and $\sigma_3$ are the
		classical Pauli matrices defined by
		\begin{equation*}
		\sigma_1=
		\begin{pmatrix}
		0 & 1\\
		1 & 0
		\end{pmatrix},
		\qquad
		\sigma_2=
		\begin{pmatrix}
		0 & -i\\
		i & 0
		\end{pmatrix},
		\quad
		{\rm and}
		\quad
		\sigma_3=
		\begin{pmatrix}
		1 & 0\\
		0 & -1
		\end{pmatrix}.
		\end{equation*}
		The mild form of the stochastic Manakov equation \eqref{eq:Manakov} reads 
		\begin{equation}
		\label{mildManakov}
		X(t)=U(t,0)X_0+i\int_0^tU(t,s)F(X(s))\,\text{d}s, 
		\end{equation}
		where $X_0 = (X_{0,1},X_{0,2})$ denotes the initial value of the problem,
		$U(t,s)$ for $t\geq s$ with $s,t\in\R_+$ 
		is the random unitary propagator defined as the unique solution 
		to the linear part of \eqref{eq:Manakov}, and $F(X)=|X|^2X$.
		
		Let $p\geq1$. We define $\L^p:=\L^p(\R):=(L^p(\R;\C))^2$ the Lebesgue spaces of functions with values in $\C^2$. 
		We equip $\L^2$ with the real scalar product
		$\displaystyle (u,v)_2=\sum_{j=1}^2\mathrm{Re}\left(\int_\R u_j\overline{v_j}\,\text{d}x\right)$.
		Further, for $m\in\N$, we denote by $\H^m:=\H^m(\R)$ the space 
		of functions in $\L^2$ with their $m$ first derivatives in $\L^2$,
                for which we denote the corresponding 
                norm by $\norm{\cdot}_{\H^m} = \norm{\cdot}_m$.
		
		Just as for the classical cubic Schr\"odinger equation, the $\L^2$-norm of the exact solution to the stochastic Manakov system 
		\eqref{eq:Manakov} is almost surely preserved: 
		$$
		\norm{X(t)}_{\L^2}=\norm{X_0}_{\L^2}
		$$ 
		for all $t\in[0,\tau^*[$, where $\tau^*>0$ is a stopping time, see \cite{MR3024974} for details. 
		This is not the case for the evolution of the Hamiltonian, or total energy 
		$$
		H(X):=\frac12\int_\R\left|\frac{\partial X}{\partial x} \right|^2\,\text{d}x-\frac14\int_\R\left|X\right|^4\,\text{d}x, 
		$$
		where $|X|^4=(|X|^2)^2=(|X_1|^2+|X_2|^2)^2$, 
		as shown in \cite[Lemma~3.1]{MR3024974}. 
		
		For the time-integration of the system \eqref{eq:Manakov}, one has to face two issues.
		First, the linear part of this SPDE generates a stochastic group which is not easy 
		to compute. In particular, since the Pauli matrices do not commute, 
		it is not the product of the stochastic semi-groups associated to each
		Brownian motion with the group generated by $i\partial^2_x$. 
		Second, the nonlinear coupling term $|X|^2 X$ often leads to 
		implicit numerical methods that are costly to solve, see for instance the Crank--Nicolson scheme from \cite{MR3166967}. 
		
		Therefore, we numerically approximate solutions to the stochastic Manakov equation \eqref{eq:Manakov} 
		with the Lie--Trotter splitting scheme 
		\begin{equation}
		\label{ltsplit}
		X^{n+1}=U_{h,n+1}\left(X^n+i\int_{t_n}^{t_{n+1}}F(Y^n(s))\,\text{d}s\right),
		\end{equation}
		where $h>0$ denotes the stepsize, $t_n=nh$ for nonnegative integers $n$, 
		$U_{h,n+1}=\left( Id+\frac12 H_{h,n} \right)^{-1}\left( Id-\frac12 H_{h,n} \right)$, 
		with $Id$ the identity operator and $\displaystyle H_{h,n}=-ihI_2\partial_x^2+\sqrt{\gamma h}\sum_{k=1}^3\sigma_k\chi_k^n\partial_x$. Here, $I_2$ is the $2\times2$ identity matrix 
		and $\sqrt{h}\chi_k^n=W_k((n+1)h)-W_k(nh)$, for $k=1,2,3$, are i.i.d. Wiener increments. Furthermore, 
		$Y^n$ is the exact solution to the nonlinear differential equation $i\,\text{d}Y+F(Y)\,\text{d}t=0$ with initial value $X^n$ at time $t=t_n$.
		Iterating the recurrence given by \eqref{ltsplit}, one obtains the discrete mild form 
		of the Lie--Trotter splitting scheme
		\begin{equation}
		\label{eq:SplRec}
		X^{n} = \mathcal{U}_h^{n,0}X_0 + i\sum_{l=0}^{n-1} \mathcal{U}_h^{n,l} \int_{t_l}^{t_{l+1}}F(Y^l(s))\,\text{d}s, 
		\end{equation}
		where $\U_{h}^{n,l}:=U_{h,n}\cdot\ldots\cdot U_{h,l+1}$ with $\U_h^{0,0}=Id$ if needed.
		
		As the exact solution to the SPDE \eqref{eq:Manakov},
		we have that the Lie--Trotter scheme also preserves the 
		$\L^2$-norm almost surely:
		\begin{lemma}
			\label{thm:LTPres}
			The Lie--Trotter splitting scheme \eqref{ltsplit} preserves the $\L^2$-norm almost surely.
		\end{lemma}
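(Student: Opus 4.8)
The plan is to use the obvious two-substep structure of one step of \eqref{ltsplit}: first the exact flow of the nonlinear ODE $i\,\text{d}Y+F(Y)\,\text{d}t=0$ over $[t_n,t_{n+1}]$, which sends $X^n$ to $Y^n(t_{n+1})=X^n+i\int_{t_n}^{t_{n+1}}F(Y^n(s))\,\text{d}s$, and then the application of the linear operator $U_{h,n+1}$. I would show that each of these two maps preserves $\norm{\cdot}_{\L^2}$ almost surely, and conclude by induction on $n$, the base case being $X^0=\U_h^{0,0}X_0=X_0$.

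For the nonlinear substep I would argue pointwise in $x$. Since $Y^n$ solves $\partial_t Y=i|Y|^2Y$, a one-line computation gives $\partial_t|Y^n_j(t,x)|^2=2\ReT\big(i|Y^n(t,x)|^2|Y^n_j(t,x)|^2\big)=0$ for $j=1,2$, so $|Y^n(t,x)|$ is constant in $t$; in fact $Y^n(s)(x)=e^{i|X^n(x)|^2(s-t_n)}X^n(x)$, which makes the substep globally well defined and shows $\norm{Y^n(s)}_{\L^2}=\norm{X^n}_{\L^2}$ for every $s\in[t_n,t_{n+1}]$. Evaluating at $s=t_{n+1}$ gives that the nonlinear half-step is an $\L^2$-isometry, i.e. $\norm{X^n+i\int_{t_n}^{t_{n+1}}F(Y^n(s))\,\text{d}s}_{\L^2}=\norm{X^n}_{\L^2}$.

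For the linear substep I would pass to the Fourier variable $\xi$ dual to $x$, in which $H_{h,n}$ becomes multiplication by the $2\times2$ matrix $M(\xi)=i\big(h\xi^2 I_2+\sqrt{\gamma h}\,\xi\sum_{k=1}^3\chi_k^n\sigma_k\big)$. Since the Pauli matrices are Hermitian and the increments $\chi_k^n$ are real (a.s.), $M(\xi)$ is skew-Hermitian, so $I_2\pm\frac12 M(\xi)$ are invertible and commute and $\big(I_2+\frac12 M(\xi)\big)^*=I_2-\frac12 M(\xi)$; hence the symbol $\big(I_2+\frac12 M(\xi)\big)^{-1}\big(I_2-\frac12 M(\xi)\big)$ of $U_{h,n+1}$ is a unitary matrix for every $\xi$, and by Plancherel's theorem $U_{h,n+1}$ is an isometry of $\L^2$. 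Combining the two substeps yields $\norm{X^{n+1}}_{\L^2}=\norm{U_{h,n+1}Y^n(t_{n+1})}_{\L^2}=\norm{X^n}_{\L^2}$ almost surely, and the induction closes.

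The computations above are all routine; the only point that needs a little care is the functional-analytic justification of the Fourier-multiplier representation of $U_{h,n+1}$ on $\L^2$ — equivalently, that $H_{h,n}$ is (essentially) skew-adjoint on its natural domain, so that its Cayley transform is genuinely unitary. This is classical: $-ihI_2\partial_x^2$ generates the unitary Schr\"odinger group and the first-order term $\sqrt{\gamma h}\sum_{k=1}^3\sigma_k\chi_k^n\partial_x$ is a skew-adjoint, relatively bounded perturbation. I expect this to be the main (and only, and minor) obstacle; everything else is a direct verification.
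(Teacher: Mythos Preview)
Your proof is correct and follows essentially the same two-substep strategy as the paper: pointwise conservation of $|Y^n(t,x)|^2$ for the nonlinear flow, followed by the $\L^2$-isometry of $U_{h,n+1}$, then induction. The only difference is that the paper cites \cite[Appendix~5]{bcd20} for the isometry of $U_{h,n+1}$, whereas you supply the Fourier-side Cayley-transform argument yourself; your version is therefore more self-contained, but the logical structure is the same.
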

		\begin{proof}
			Choose $n\in\N$ such that the scheme is well-defined at step $n+1$.
			By definition of $Y^n$, since $Y^n_1$ and $Y_2^n$ 
			solve pointwise in $x\in\R$ the following differential equations,
			\begin{equation*}
			\frac{\text{d}}{\text{d}t} Y_1(t) =-i (|Y_1(t)|^2+|Y_2(t)|^2)Y_1(t)
			\qquad \text{and} \qquad
			\frac{\text{d}}{\text{d}t} Y_2(t) =-i (|Y_1(t)|^2+|Y_2(t)|^2)Y_2(t),
			\end{equation*}
			over $[t_n,t_{n+1}]$ with $Y^n(t_n)=X^n$, 
			we have for all $t\in[t_n,t_{n+1}]$,
			\begin{align*}
			\frac{\text{d}}{\text{d}t} \left|Y^{n}(t)\right|^2
			&= \frac{\text{d}}{\text{d}t} \left(|Y^n_1(t)|^2 + |Y^n_2(t)|^2\right)^2\\
			&  = 4 \left(|Y^n_1(t)|^2 + |Y^n_2(t)|^2\right)
			\mathrm{Re}\left(\overline{Y^n_1(t)}\frac{\text{d}}{\text{d}t}Y^n_1(t)+\overline{Y^n_2(t)}\frac{\text{d}}{\text{d}t}Y^n_2(t)\right)
			\\
			& = 0,
			\end{align*}
			pointwise in $x\in\R$. In particular, we have $|Y^n(t_n)|^2=|Y^n(t_{n+1})|^2$.
			Integrating this last identity over $\R$ yields
			$\norm{Y^n(t_n)}_{\L^2}^2=\norm{Y^n(t_{n+1})}_{\L^2}^2$.
			
			Using the above, the fact that $U_{h,n+1}$ is an isometry over $\L^2$, 
			see for instance \cite[Appendix~5]{bcd20}, and the definition of $Y^n$, one then obtains
			$$
			\norm{X^{n+1}}_{\L^2}
			= \norm{X^{n}+i\int_{t_n}^{t_{n+1}}F(Y^n(s))\,\text{d}s}_{\L^2}
			= \norm{Y^{n}(t_{n+1})}_{\L^2}
			= \norm{Y^{n}(t_{n})}_{\L^2}
			= \norm{X^n}_{\L^2}.
			$$
		\end{proof}
		
		\section{Convergence analysis of the Lie--Trotter splitting scheme}\label{sec-conv}
		In this section, we consider the convergence analysis of the Lie--Trotter splitting \eqref{ltsplit} 
		where we have a globally Lipschitz continuous and bounded nonlinearity in \eqref{mildManakov}. 
		This is the case for instance, when one introduces 
		a cut-off function for the cubic nonlinearity present in \eqref{eq:Manakov}: 
		Let $R>0$ and $\theta\in\mathcal{C}^\infty(\R_+)$, with $\theta\ge 0$, $\text{supp}(\theta)\subset[0,2]$ and $\theta\equiv1$ on $[0,1]$. 
		For $x\geq0$, we set $\theta_R(x)=\theta(\frac xR)$ and define $F_R(X)=\theta_R(\norm{X}^2_{1})|X|^2X$.

		We next present some properties of the function $F_R$ as well as of the numerical solution, 
		given by the Lie--Trotter splitting \eqref{ltsplit}, 
		of the SPDE \eqref{mildManakov} with the cut-off nonlinearity $F_R$. 
		
		\begin{lemma}
			\label{lemma:bounds}
				There exists a positive constant $C$ such that for all $R>0$ and all $X\in\H^1$,
				\begin{equation*}
				\norm{F_R(X)}_{1}\leq CR^{3/2}.
				\end{equation*}
			Furthermore, for all $R>0$, the function $F_R$ is globally Lipschitz continuous in $\H^1$, 
			with corresponding Lipschitz constant $L_R$.
			The map $F_R$ also sends bounded subsets of $\H^2$ to bounded subsets of $\H^2$, resp. $\H^6$ to $\H^6$. 
			Finally, the numerical solution of \eqref{mildManakov}, with the cut-off nonlinearity $F_R$, given by the Lie--Trotter splitting scheme \eqref{ltsplit} 
			is almost surely bounded in $\H^m$ for all $m\in\{1,2,6\}$:
                        For all $X^0\in \H^m$, for all $T>0$, there exists
                          a positive constant $C(\norm{X_0}_m,T,L_R)$ such that 
                        for all integer $N$ large enough, for all $n=1,\ldots,N$, one has
			$$
			\displaystyle\norm{X^n}_{m}+\sup_{t_n\le s\le t_{n+1}} \norm{Y^n(s)}_m  \le C(\norm{X_0}_m,T,L_R) \quad \text{a.s}. 
			$$
		\end{lemma}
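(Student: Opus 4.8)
The plan is to prove the four assertions in turn: the first three are soft consequences of one‑dimensional Sobolev calculus, while the last one is a discrete Gronwall argument whose engine is a sharpened estimate on $F_R$.

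\textbf{Bound, Lipschitz continuity and boundedness of $F_R$.} Recall that $\theta_R$ is smooth, $[0,1]$-valued, and that $X\mapsto\theta_R(\norm{X}_1^2)$ vanishes outside $\{X:\norm{X}_1^2\le 2R\}$. Since $\H^1(\R)$ is a Banach algebra and complex conjugation is an $\H^1$-isometry, $\norm{|X|^2X}_1\le C\norm{X}_1^3$; hence on the support of $\theta_R(\norm{\cdot}_1^2)$ one has $\norm{F_R(X)}_1\le\norm{|X|^2X}_1\le C(2R)^{3/2}$, while $F_R\equiv 0$ off this set, giving the first estimate. The same algebra estimate in $\H^2$ and $\H^6$ (both algebras, as $2,6>1/2$) gives $\norm{F_R(X)}_m\le\norm{|X|^2X}_m\le C_m\norm{X}_m^3$, so $F_R$ sends bounded subsets of $\H^m$ to bounded subsets of $\H^m$ for $m\in\{2,6\}$. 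For global Lipschitz continuity in $\H^1$ I would split
\[
F_R(X)-F_R(Y)=\theta_R(\norm{X}_1^2)\bigl(|X|^2X-|Y|^2Y\bigr)+\bigl(\theta_R(\norm{X}_1^2)-\theta_R(\norm{Y}_1^2)\bigr)|Y|^2Y
\]
and argue by cases: if $\norm{X}_1^2>2R$ and $\norm{Y}_1^2>2R$ both sides vanish; otherwise, say $\norm{X}_1^2\le 2R$ (else swap $X,Y$), and if moreover $\norm{Y}_1^2\le 4R$, bound the first term by the local Lipschitz estimate $\norm{|X|^2X-|Y|^2Y}_1\le C(\norm{X}_1^2+\norm{Y}_1^2)\norm{X-Y}_1$ and the second by $|\theta_R(a)-\theta_R(b)|\le R^{-1}\norm{\theta'}_\infty|a-b|$ together with $|\norm{X}_1^2-\norm{Y}_1^2|\le(\norm{X}_1+\norm{Y}_1)\norm{X-Y}_1$ and $\norm{|Y|^2Y}_1\le C\norm{Y}_1^3$; while if $\norm{Y}_1^2>4R$ then $F_R(Y)=0$ and $\norm{X-Y}_1\ge\norm{Y}_1-\norm{X}_1\ge(2-\sqrt2)\sqrt R$, so $\norm{F_R(X)-F_R(Y)}_1\le C(2R)^{3/2}\le CR\norm{X-Y}_1$. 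Collecting the cases yields an explicit Lipschitz constant $L_R$.

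\textbf{A linear-growth estimate and the isometry property of $U_{h,n+1}$.} The crucial observation for the last claim is that, once the cut-off is in force, the top-order Sobolev norm of $F_R$ grows only \emph{linearly}: for every integer $m\ge1$ and $X\in\H^m$,
\[
\norm{F_R(X)}_m\ \le\ C_m R\,\norm{X}_m .
\]
Indeed $\theta_R(\norm{X}_1^2)\ne 0$ forces $\norm{X}_{\L^\infty}\le c\norm{X}_1\le c\sqrt{2R}$ by $\H^1(\R)\hookrightarrow\L^\infty$, and the Moser (Kato--Ponce) product estimate $\norm{fg}_m\le C_m(\norm{f}_{\L^\infty}\norm{g}_m+\norm{g}_{\L^\infty}\norm{f}_m)$, applied twice to $|X|^2X=X\overline X X$ componentwise, gives $\norm{|X|^2X}_m\le C_m\norm{X}_{\L^\infty}^2\norm{X}_m$; multiplying by $|\theta_R(\norm{X}_1^2)|\le 1$ and inserting the bound on $\norm{X}_{\L^\infty}$ proves the claim (off the support both sides, or at least the left one, vanish). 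I also record that, since the $\chi_k^n$ are a.s.\ real and finite, $H_{h,n}$ is skew-adjoint on $\L^2$, so $U_{h,n+1}$ is unitary on $\L^2$ (cf.\ \cite[Appendix~5]{bcd20}); moreover $H_{h,n}$ has $x$-independent coefficients, hence commutes with $\partial_x$, so $U_{h,n+1}$ commutes with $\partial_x$ and is therefore an isometry of $\H^m$ for every $m$.

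\textbf{Almost sure bounds on the numerical solution.} Fix $m\in\{1,2,6\}$, $X_0\in\H^m$, $T>0$, and let $h=T/N$. Since $F_R$ is locally Lipschitz on $\H^m$ — a polynomial nonlinearity on the algebra $\H^m$ times the smooth bounded factor $X\mapsto\theta_R(\norm{X}_1^2)$, itself a smooth function of the continuous quadratic form $\norm{\cdot}_1^2$ — the flow $Y^n$ of $\mathrm{d}Y=iF_R(Y)\,\mathrm{d}t$ with $Y^n(t_n)=X^n$ exists locally in $\H^m$; the differential inequality $\frac{\mathrm{d}}{\mathrm{d}t}\norm{Y^n(t)}_m\le\norm{F_R(Y^n(t))}_m\le C_m R\norm{Y^n(t)}_m$ and Gronwall's lemma give $\sup_{t_n\le s\le t_{n+1}}\norm{Y^n(s)}_m\le e^{C_m R h}\norm{X^n}_m$, which rules out blowup and makes $Y^n$ global on $[t_n,t_{n+1}]$ (for $m=1$ one may instead use $\norm{F_R}_1\le CR^{3/2}$ and get $\sup_{[t_n,t_{n+1}]}\norm{Y^n(s)}_1\le\norm{X^n}_1+CR^{3/2}h$). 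Plugging this into \eqref{ltsplit} and using that $U_{h,n+1}$ is an isometry of $\H^m$,
\[
\norm{X^{n+1}}_m\le\norm{X^n}_m+\int_{t_n}^{t_{n+1}}\norm{F_R(Y^n(s))}_m\,\mathrm{d}s\le\bigl(1+C_m R h\, e^{C_m R h}\bigr)\norm{X^n}_m ,
\]
and a discrete Gronwall argument over $n=1,\dots,N$ (all with $nh\le T$, and $h\le1$, which holds once $N$ is large enough) gives $\norm{X^n}_m\le e^{C(R)T}\norm{X_0}_m$. Combining the two displays, and recalling that $L_R$ is an increasing function of $R$, produces the announced constant $C(\norm{X_0}_m,T,L_R)$. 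All inequalities above are pathwise, the only randomness entering through the operators $U_{h,n+1}$, which are $\H^m$-isometries almost surely; hence the bound holds a.s.

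\textbf{Main difficulty.} The genuinely delicate point is the linear-growth bound $\norm{F_R(X)}_m\le C_m R\norm{X}_m$: without the cut-off, $|X|^2X$ obeys only the cubic bound $\norm{|X|^2X}_m\le C_m\norm{X}_m^3$, and the ODE for $Y^n$ could then blow up within a single time step unless $h$ were chosen small in terms of $\norm{X^n}_m$ — a circular requirement in the induction on $n$. It is precisely the cut-off, through the a priori control of $\norm{X}_{\L^\infty}$ and one-dimensional Gagliardo--Nirenberg/Moser interpolation, that downgrades this growth to linear and lets the Gronwall argument close. A minor amount of bookkeeping is also needed in the global-Lipschitz step when exactly one of $X,Y$ lies outside the support of the cut-off, and in checking that $U_{h,n+1}$ is an isometry of $\H^2$ and $\H^6$ — which follows from its commutation with $\partial_x$.
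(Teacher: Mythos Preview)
Your proof is correct and, in outline, parallels the paper's: algebra properties of $\H^m$ for the first three claims, a Gronwall argument for the a.s.\ bounds. The paper's argument is however deliberately terse --- it handles global Lipschitz continuity via the mean value theorem (observing that $F_R$ is $C^\infty$ with bounded derivative on $\H^1$ and vanishes outside a ball) rather than your explicit case analysis, and for the a.s.\ bounds it says only that the argument is ``classical\dots using Picard's iterations and Gronwall's lemma.'' Your linear-growth estimate $\norm{F_R(X)}_m\le C_m R\,\norm{X}_m$, obtained from the Kato--Ponce product inequality together with the cut-off's control of $\norm{X}_{\L^\infty}$ via $\H^1\hookrightarrow\L^\infty$, is precisely the ingredient that lets the Gronwall argument close in $\H^2$ and $\H^6$ without the circularity you correctly flag (the cubic bound $\norm{X}_m^3$ alone would make the required smallness of $h$ depend on $\norm{X^n}_m$); this is a point the paper leaves entirely to the reader. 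The same goes for your remark that $U_{h,n+1}$ commutes with $\partial_x$ and is therefore an $\H^m$-isometry for every $m$, which the paper uses elsewhere but does not justify in the proof of this lemma.
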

		\begin{proof}
			We only highlight parts of the proofs. 
			
			In order to show that $F_R$ is globally Lipschitz continuous from $\H^1$ to $\H^1$, one first observes that $F_R$ is of class 
			$\mathcal C^\infty$ and vanishes outside the ball $B_{2R}^{H^1}\times B^{H^1}_{2R}\subset B_{2R\sqrt{2}}^{\H^1}$ 
			, where $H^1=H^1(\R;\C)$. Moreover, the derivative of $F_R$ is bounded on $\H^1$.
                        The mean value theorem then implies that $F_R$
                        is globally Lipschitz continuous, and we denote by $L_R$ the
                        corresponding Lipschitz constant.
			
			In order to show that $F_R$ sends bounded subsets of $\H^2$ to bounded subsets 
			of $\H^2$, one uses the definition of $\theta_R$ and the fact that $H^2$ is an algebra to get
			\begin{align*}
			\norm{F_R(X)}_2^2
			&=\norm{F_R((X_1,X_2))}_2^2
			\leq C 
			\left|
			\theta_R(\norm{X}^2_1)
			\right|^2
			\left( 
			\norm{|X_1|^2X_1+|X_2|^2X_1}_{H^2}^2
			+\norm{|X_1|^2X_{2}+|X_2|^2X_2}_{H^2}^2 
			\right)\\
			&\leq C \left(
			\norm{X_1}_{H^2}^6
			+\norm{X_2}_{H^2}^4\norm{X_1}_{H^2}^2
			+\norm{X_1}_{H^2}^4\norm{X_2}_{H^2}^2
			+\norm{X_2}_{H^2}^6
			\right),
			\end{align*}
			which is bounded if $\norm{(X_1,X_2)}_2$ is bounded. 
			
			Finally, when considering the SPDE \eqref{mildManakov} with the cut-off Lipschitz nonlinearity $F_R$, 
			it is classical to show, using for instance Picard's iterations and Gr\"onwall's lemma, 
			that the Lie--Trotter splitting scheme is almost surely bounded in $\H^m$, with $m\in\{1,2,6\}$.

		\end{proof}

		With the above preparation, we can now show strong convergence of the Lie--Trotter splitting scheme 
		when applied to the stochastic Manakov equation \eqref{mildManakov} with a cut-off nonlinearity $F_R$. 
		\begin{theorem}
			\label{thm:LTSplConvStrong}
			Let $R>0$, $T\geq0$, $N\in\N$, $h=T/N$, $p\geq1$, and $X_0\in \H^6$.
			Consider the stochastic Manakov equation \eqref{mildManakov} with the cut-off nonlinearity $F_R$. 
			Then, the Lie--Trotter splitting scheme \eqref{ltsplit} has strong order of convergence $1/2$:
			There exists $h_0>0$ such that
			$$
			\forall h\in (0,h_0),\qquad
			\E\left[ \max_{n=0,1,\ldots, N}\norm{X^n-X(t_n)}_{\H^1}^{2p} \right]\leq C h^p,
			$$
			where $C = C(\norm{X_0}_6,T,L_R,p,\gamma)$. 
		\end{theorem}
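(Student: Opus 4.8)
The strategy is to compare the one-step Duhamel forms of the exact solution and of the scheme, to iterate the resulting error recursion, and then to estimate the accumulated local errors; the decisive point is that the dominant local error is only of size $h$ per step and becomes of size $h^{1/2}$ globally after a martingale cancellation. We use throughout that the assumption $X_0\in\H^6$ propagates: by well-posedness of \eqref{mildManakov} with the globally Lipschitz nonlinearity $F_R$ and Gr\"onwall's lemma --- recall from Lemma~\ref{lemma:bounds} that $F_R$ maps bounded subsets of $\H^2$, resp.\ $\H^6$, into themselves --- one has $\sup_{0\le t\le T}\norm{X(t)}_6\le C(\norm{X_0}_6,T,L_R)$ almost surely, in addition to the a.s.\ bound on $\norm{X^n}_m+\sup_{t_n\le s\le t_{n+1}}\norm{Y^n(s)}_m$, $m\in\{1,2,6\}$, from Lemma~\ref{lemma:bounds}. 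We also use repeatedly that $U(t,s)$ and each $U_{h,n}$ are isometries on every $\H^m$ (they commute with $\partial_x$, and $H_{h,n}$ is skew-adjoint on $\L^2$), and that $\norm{F_R(\,\cdot\,)}_1\le CR^{3/2}$. Writing $e^n:=X^n-X(t_n)$ and letting $\widehat Y^n$ solve $i\,\dd Y+F_R(Y)\,\dd t=0$ with $\widehat Y^n(t_n)=X(t_n)$, subtracting the Duhamel identity for $X$ over $[t_n,t_{n+1}]$ from one step \eqref{ltsplit} of the scheme yields
\[
e^{n+1}=U_{h,n+1}e^n+\mathcal N^n+\mathcal B^n+\mathcal A^n,
\]
where $\mathcal N^n=i\int_{t_n}^{t_{n+1}}U_{h,n+1}\bigl(F_R(Y^n(s))-F_R(\widehat Y^n(s))\bigr)\,\dd s$ satisfies $\norm{\mathcal N^n}_1\le Ch\,\norm{e^n}_1$ (Lipschitz property of $F_R$ and Gr\"onwall applied to the difference of the two nonlinear flows, started from $X^n$ and $X(t_n)$); where $\mathcal B^n$ collects the errors from discretising the nonlinear Duhamel integral and from the propagator mismatch inside it, with $\norm{\mathcal B^n}_{L^{2p}(\Omega;\H^1)}\le Ch^{3/2}$ (using the bounds $\norm{(U(s,t_n)-Id)v}_{L^{2p}(\Omega;\H^1)}+\norm{(Id-U_{h,n+1})v}_{L^{2p}(\Omega;\H^1)}\le Ch^{1/2}\norm{v}_3$ for $s\in[t_n,t_{n+1}]$, which follow from the It\^o form of the linear equation and from $Id-U_{h,n+1}=(Id+\tfrac12 H_{h,n})^{-1}H_{h,n}$ respectively, together with the extra factor $h$ from the time integral and the a priori bounds); and where $\mathcal A^n:=\bigl(U_{h,n+1}-U(t_{n+1},t_n)\bigr)X(t_n)$ is the linear local error, the dominant term.

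The heart of the matter is $\mathcal A^n$. Expanding both $U_{h,n+1}=(Id+\tfrac12 H_{h,n})^{-1}(Id-\tfrac12 H_{h,n})$ and the stochastic propagator $U(t_{n+1},t_n)$ in iterated Stratonovich integrals of the operators $i\partial_x^2$ and $\sqrt{\gamma}\,\sigma_k\partial_x$, the two expansions coincide up to second order \emph{except} for the genuinely non-commutative contribution: since the Pauli matrices do not commute, the second-order part of $U(t_{n+1},t_n)$ carries $\tfrac{\gamma}{2}\sum_{j<k}[\sigma_j,\sigma_k]\,\partial_x^2\,A^n_{jk}$, built from the L\'evy areas $A^n_{jk}=\int_{t_n}^{t_{n+1}}(W_j(s)-W_j(t_n))\circ\dd W_k(s)-(j\leftrightarrow k)$ of the driving pair, whereas the corresponding part of $U_{h,n+1}$ only produces the symmetric combination $\tfrac12\,\Delta W^n_j\,\Delta W^n_k$. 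Therefore
\[
\mathcal A^n=P^n+r^n,\qquad P^n:=\tfrac{\gamma}{2}\sum_{1\le j<k\le 3}A^n_{jk}\,[\sigma_j,\sigma_k]\,\partial_x^2 X(t_n),
\]
where $r^n$ collects all the remaining terms, which are of order $h^{3/2}$ or higher (the Cayley-versus-exponential discrepancy, the dispersive cross terms, the higher iterated integrals, etc.). Using the a.s.\ $\H^6$-bound on $X(t_n)$, one gets $\E[\norm{r^n}_1^{2p}]\le Ch^{3p}$, but only $\E[\norm{P^n}_1^{2p}]\le Ch^{2p}$: the term $P^n$ is genuinely of size $h$, which is precisely why the scheme has order $1/2$ and not $1$ (in the commutative case $P^n$ would vanish). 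The decisive extra feature is that, with $\mathcal G_n:=\mathcal F_{t_n}\vee\sigma\bigl(W(t_{n+1})-W(t_n)\bigr)$, the L\'evy areas satisfy $\E[A^n_{jk}\mid\mathcal G_n]=0$ --- conditionally on its own increments the L\'evy area is centred --- so that $\E[P^n\mid\mathcal G_n]=0$. Establishing this joint expansion of the two propagators, together with the remainder estimate in $\H^1$ controlled by the $\H^6$-norm of the argument, is the main technical obstacle, and this is where the hypothesis $X_0\in\H^6$ is used.

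Iterating the recursion gives $e^n=\sum_{l=0}^{n-1}\U_h^{n,l+1}\bigl(\mathcal N^l+\mathcal B^l+r^l+P^l\bigr)$. As the $\U_h^{n,l+1}$ are isometries on $\H^1$, the $\mathcal N^l$-terms contribute at most $Ch\sum_{l<n}\norm{e^l}_1$ to $\norm{e^n}_1$, and by Minkowski's inequality the $\mathcal B^l$- and $r^l$-terms contribute at most $C\,N\,h^{3/2}=CTh^{1/2}$ in $L^{2p}(\Omega;\H^1)$. For the principal part a pathwise triangle inequality is useless ($N$ terms of size $h$ would give size $T$); instead one uses the cocycle identity $\U_h^{n,l+1}=\U_h^{n,0}(\U_h^{l+1,0})^{-1}$ to write $S_n:=\sum_{l<n}\U_h^{n,l+1}P^l=\U_h^{n,0}\,\widetilde S_n$ with $\widetilde S_n:=\sum_{l<n}(\U_h^{l+1,0})^{-1}P^l$. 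Since $(\U_h^{l+1,0})^{-1}$ is $\mathcal G_l$-measurable (it depends on $W$ only through the grid increments over $[0,t_{l+1}]$) while $\E[P^l\mid\mathcal G_l]=0$, the process $(\widetilde S_n)_n$ is an $\H^1$-valued $(\mathcal F_{t_n})$-martingale; the isometry property and the discrete Burkholder--Davis--Gundy inequality then give
\[
\E\bigl[\max_{n\le N}\norm{S_n}_1^{2p}\bigr]=\E\bigl[\max_{n\le N}\norm{\widetilde S_n}_1^{2p}\bigr]\le C\,\E\bigl[(\textstyle\sum_{l<N}\norm{P^l}_1^2)^{p}\bigr]\le C\,(Nh^2)^p=C(Th)^p.
\]
Collecting the four contributions, using Jensen's inequality on the $\mathcal N^l$-term and taking $\max_{n\le m}$, one obtains
\[
\E\bigl[\max_{n\le m}\norm{e^n}_1^{2p}\bigr]\le Ch^p+C\,hT^{2p-1}\sum_{l<m}\E\bigl[\max_{k\le l}\norm{e^k}_1^{2p}\bigr],\qquad m=1,\dots,N,
\]
so that the discrete Gr\"onwall lemma (note that $hT^{2p-1}\cdot N=CT^{2p}$) yields $\E\bigl[\max_{n\le N}\norm{X^n-X(t_n)}_1^{2p}\bigr]\le Ch^p$ with $C=C(\norm{X_0}_6,T,L_R,p,\gamma)$, which is the claimed estimate; $h_0$ is chosen small enough that all of the above is valid.
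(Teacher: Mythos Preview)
Your proof is correct in outline and takes a genuinely different route from the paper's. The paper works directly with the global discrete mild form \eqref{eq:SplRec}, subtracts it from \eqref{mildManakov}, and decomposes the difference into a linear-propagator part $I_1^n$ plus a nonlinear part $I_2^n$ split further into five terms $J_1^n,\dots,J_5^n$. Crucially, the linear local-error analysis --- the step that determines the order $1/2$ --- is \emph{not} carried out in the paper: it is imported wholesale from \cite[Proposition~2.2]{MR3166967}, which gives $\E\bigl[\max_{n,l}\norm{(\mathcal U_h^{n,l}-U(t_n,t_l))v}_1^{2p}\bigr]\le Ch^p$ directly. The argument is then closed not by Gr\"onwall but by a short-time trick: one first proves the estimate on an interval $[0,T_1]$ with $L_R T_1<1$ so that the $J_4^n$-term can be absorbed into the left-hand side, and then patches together finitely many such subintervals using the Lipschitz dependence of the exact flow on its initial value.

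By contrast, you run a one-step recursion, isolate the L\'evy-area contribution $P^n$ explicitly as the leading term in $(U_{h,n+1}-U(t_{n+1},t_n))X(t_n)$, observe that $\E[P^n\mid\mathcal G_n]=0$ (the conditionally-centred L\'evy area), and use a discrete BDG inequality on the resulting $(\mathcal F_{t_n})$-martingale $\widetilde S_n$ after removing the isometric prefactor $\mathcal U_h^{n,0}$. This is essentially a self-contained re-derivation of the cited linear result, woven into the nonlinear proof, and it makes transparent \emph{why} the order is $1/2$: it is the non-commutativity of the Pauli matrices that prevents the second-order Stratonovich expansion of $U(t_{n+1},t_n)$ from matching the symmetric product $\tfrac12\Delta W_j^n\Delta W_k^n$ produced by the Cayley transform. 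Your closure via discrete Gr\"onwall is also cleaner than the paper's interval-patching. The price you pay is that the expansion $\mathcal A^n=P^n+r^n$ with $\E[\norm{r^n}_1^{2p}]\le Ch^{3p}$ is only sketched; carrying it out rigorously (controlling the third-order iterated integrals and the Cayley remainder in $\H^1$ via the $\H^6$-bound on $X(t_n)$) is precisely the content of the proposition the paper cites, and is the main technical work either way.
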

		
		\begin{proof}
                  For ease of presentation, in the proof below, we 
                  remove the subscript $R$ in the stochastic processes $X_R(t)$ and $X_R^n$. 
			
			Let us denote the difference $X^n-X(t_n)$ by $e^n$. Using the mild equations \eqref{mildManakov} and \eqref{eq:SplRec}, 
			one gets 
			\begin{align*}
			\norm{e^n}_1
			=& \norm{\mathcal{U}_h^{n,0}X_0 
				+ i\sum_{l=0}^{n-1} \mathcal{U}_h^{n,l}\int_{t_l}^{t_{l+1}}F_R(Y^l(s))\,\text{d}s
				- U(t_n,0)X_0
				- i\int_{0}^{t_n}U(t_n,s)F_R(X(s))\,\text{d}s
			}_1\\
			\le& \norm{\left(\mathcal{U}^{n,0}_h-U(t_n,0)\right)X_0}_1
			+ \norm{\sum_{l=0}^{n-1} \int_{t_l}^{t_{l+1}} \left( \mathcal{U}_h^{n,l}F_R(Y^l(s)) - U(t_n,s)F_R(X(s)) \right) \,\text{d}s}_1\\
			=:&\, I^n_1 + I^n_2.
			\end{align*}
			We begin by estimating the term $I^n_2$ using the following decomposition
			\begin{align*}
			I^n_2 &= \bigg\lVert
			\sum_{l=0}^{n-1} \int_{t_l}^{t_{l+1}}
			\mathcal{U}_h^{n,l}F_R(Y^l(s))-U(t_n,t_l)F_R(Y^l(s))+U(t_n,t_l)F_R(Y^l(s))-U(t_n,s)F_R(Y^l(s))\\
			&\quad+U(t_n,s)F_R(Y^l(s))-U(t_n,s)F_R(X^l)+U(t_n,s)F_R(X^l)-U(t_n,s)F_R(X(t_l))\\
			&\quad+U(t_n,s)F_R(X(t_l))-U(t_n,s)F_R(X(s)) \,\text{d}s
			\bigg\rVert_1\\
			&\le \norm{\sum_{l=0}^{n-1} \int_{t_l}^{t_{l+1}} \left(\mathcal{U}_h^{n,l}-U(t_n,t_l)\right)F_R(Y^l(s)) \,\text{d}s}_1
			+\norm{\sum_{l=0}^{n-1} \int_{t_l}^{t_{l+1}} \left(U(t_n,t_l)-U(t_n,s)\right)F_R(Y^l(s)) \,\text{d}s}_1\\
			&\quad+\norm{\sum_{l=0}^{n-1} \int_{t_l}^{t_{l+1}} U(t_n,s)\left(F_R(Y^l(s))-F_R(X^l)\right) \,\text{d}s}_1
			+\norm{\sum_{l=0}^{n-1} \int_{t_l}^{t_{l+1}} U(t_n,s)\left(F_R(X^l)-F_R(X(t_l))\right) \,\text{d}s}_1\\
			&\quad+\norm{\sum_{l=0}^{n-1} \int_{t_l}^{t_{l+1}} U(t_n,s)\left(F_R(X(t_l))-F_R(X(s))\right) \,\text{d}s}_1
			=: J_1^n +J_2^n +J_3^n +J_4^n +J_5^n.
			\end{align*}
			In estimating these five terms, we repeatedly make
                        use of the facts that $F_R$ is globally Lipschitz continuous, 
			and that $\mathcal{U}^{n,l}_h$ and $U(t,s)$ are isometries on $\H^1$,
                        see \cite{MR3166967,bcd20}. 
			
			In order to bound the first term, $J_1^n$, we first define 
			$$
			Y^*(t) = \sum_{l = 0}^{N-1} \mathds{1}_{[t_l,t_{l+1})}(t) Y^l(t).
			$$
			We then use 
			\cite[Proposition 2.2]{MR3166967} (strong convergence for linear problems, i.\,e when $F_R\equiv 0$) and Lemma~\ref{lemma:bounds} (the almost sure boundedness of the Lie--Trotter splitting scheme in $\H^6$) 
			to conclude that
			$$\forall t \in[0,T],\quad 
			\E\left[
			\max_{n=0,1,\ldots,N-1}
			\max_{l=0,1,\ldots,n}
			\norm{\left(\mathcal{U}_h^{n,l}-U(t_n,t_l)\right)F_R(Y^*(t))}_1^{2p}
			\right]
			\le Ch^p.
			$$
			With this, we get through H\"older's inequality that 
			\begin{align*}
			\E\left[\max_{n=0,1,\ldots,N}(J^n_1)^{2p}\right]
			&\le \E\left[
			\max_{n=0,1,\ldots,N}\left(
			\sum_{l=0}^{n-1} \int_{t_l}^{t_{l+1}} \norm{\left(\mathcal{U}_h^{n,l}-U(t_n,t_l)\right)F_R(Y^l(s))}_1\,\text{d}s 
			\right)^{2p}
			\right]\\
			&\le \E\left[
			\max_{n=0,1,\ldots,N-1}\left(
			\int_{0}^{T}
			\max_{l=0,1,\ldots,n}
			\norm{\left(\mathcal{U}_h^{n,l}-U(t_n,t_l)\right)F_R(Y^*(s))}_1
			\,\text{d}s 
			\right)^{2p}
			\right]\\
			&\le 
			T^{2p-1}
			\int_{0}^{T}
			\E\left[
			\max_{n=0,1,\ldots,N-1}
			\max_{l=0,1,\ldots,n}
			\norm{\left(\mathcal{U}_h^{n,l}-U(t_n,t_l)\right)F_R(Y^*(t))}_1^{2p}
			\right]
			\,\text{d}t \\
			&\le C_1\left(\norm{X_0}_{6},T,L_R,p,\gamma\right) h^p.
			\end{align*}
			Using the isometry property of $U(t_n,s)$ and H\"older's inequality, we obtain 
			\begin{align}\label{truc2ouf}
			\E&\left[\max_{n=0,1,\ldots,N}(J^n_2)^{2p}\right]
			\le \E\left[
			\max_{n=0,1,\ldots,N}
			\left(
			\sum_{l=0}^{n-1} \int_{t_l}^{t_{l+1}} \norm{U(t_n,s)\left(U(s,t_l)-Id\right)F_R(Y^l(s))}_1\,\text{d}s 
			\right)^{2p}
			\right]\nonumber\\
			&\le Ch^{2p}
			\E\left[ \left( \left(\sum_{l=0}^{N-1}1^{\frac{2p}{2p-1}}\right)^{\frac{2p-1}{2p}}\left(\sum_{l=0}^{N-1}\sup_{t_l\leq s\leq t_{l+1}}\norm{\left(Id-U(s,t_l)\right)F_R(Y^l(s))}^{2p}_1\right)^{\frac1{2p}} \right)^{2p}\right]\nonumber\\
			&\leq Ch^{2p}N^{2p-1}\sum_{l=0}^{N-1}\E\left[ \sup_{t_l\leq s\leq t_{l+1}}\norm{\left(Id-U(s,t_l)\right)F_R(Y^l(s))}^{2p}_1\right].
			\end{align}
			In order to estimate the above expectation, we write this term as 
			\begin{align}\label{truc2ouf2}
			\E&\left[ \sup_{t_l\leq s\leq t_{l+1}}\norm{\left(Id-U(s,t_l)\right)( F_R(Y^l(t_l))-F_R(Y^l(t_l))+F_R(Y^l(s)) )}^{2p}_1\right]\nonumber\\
			&\leq 
			C\E\left[ \sup_{t_l\leq s\leq t_{l+1}}\norm{\left(Id-U(s,t_l)\right) F_R(Y^l(t_l))}^{2p}_1\right]+C\E\left[ \sup_{t_l\leq s\leq t_{l+1}}\norm{\left(Id-U(s,t_l)\right)\left( F_R(Y^l(s))-F_R(Y^l(t_l)) \right)}^{2p}_1\right],
			\end{align}
			using the triangle inequality. 
			
			The first term in the equation above is the exact solution to the linear SPDE 
			$\displaystyle i\text dZ(t)+\frac{\partial^2Z(t)}{\partial x^2}\text dt+i\sqrt{\gamma}\sum_{k=1}^3\sigma_k\frac{\partial Z(t)}{\partial x}\circ\,\text d W_k(t)=0$ with initial value $F_R( Y^l(t_l))$ 
			at initial time $t_l$ which has the mild Ito form 
			$$
			Z(t)-F_R(Y^l(t_l))=\left(S(t-t_l)-Id\right)F_R(Y^l(t_l))+i\sqrt{\gamma}\sum_{k=1}^3\int_{t_l}^tS(t-u)\sigma_k\partial_xZ(u)\,\text dW_k(u),
			$$
			where $S(t)$ is the group solution to the free Schr\"odinger equation. Owning at the regularity property of the group $S$ (see for instance the first inequality in the proof of \cite[Lemma 4.2.1]{GazeauPhd}), 
			the fact that the numerical solution $Y^l$ is bounded, that $F_R$ sends bounded sets from $\H^2$ to $\H^2$, 
			and Burkholder--Davis--Gundy's inequality (for the second term), one obtains the following bound
			\begin{align*}
			\E\left[ \sup_{t_l\leq s\leq t_{l+1}}\norm{\left(Id-U(s,t_l)\right)F_R(Y^l(t_l))}^{2p}_1\right]\leq Ch^p. 
			\end{align*}
			Using the fact that the random propagator $U$ is an isometry, that $F_R$ is Lipschitz continuous, and $Y^l$ is solution to a Lipschitz differential equation, one gets the estimate 
			$$
			\E\left[ \sup_{t_l\leq s\leq t_{l+1}}\norm{\left(Id-U(s,t_l)\right)\left( F_R(Y^l(s))-F_R(Y^l(t_l)) \right)}^{2p}_1\right]\leq \E\left[ \sup_{t_l\leq s\leq t_{l+1}}\norm{F_R(Y^l(s))-F_R(Y^l(t_l))}^{2p}_1\right]\leq Ch^{2p},
			$$
			for the second term in \eqref{truc2ouf2}. 
			
			Combining the estimates above, one finally arrives at
			\begin{align*}
			\E\left[\max_{n=0,1,\ldots,N}(J^n_2)^{2p}\right]
			\leq Ch^{2p}N^{2p-1}Nh^p\leq Ch^p.
			\end{align*}
			For the third term, by definition of $Y^l$ and Lemma~\ref{lemma:bounds}, we get 
			\begin{align*}
			\E\left[\max_{n=0,1,\ldots,N}(J^n_3)^{2p}\right]
			&\le 
			\E\left[
			\max_{n=0,1,\ldots,N}
			\left(
			\sum_{l=0}^{n-1} \int_{t_l}^{t_{l+1}} \norm{U(t_n,s)\left(F_R(Y^l(s))-F_R(X^l)\right)}_1\,\text{d}s 
			\right)^{2p}
			\right]\\
			&\le 
			\E\left[
			\max_{n=0,1,\ldots,N}
			\left(
			L_R
			\sum_{l=0}^{n-1} 
			\int_{t_l}^{t_{l+1}} 
			\norm{
				\int_{t_l}^{s} 
				F_R(Y^l(r))
				\,\text{d}r
			}_1
			\,\text{d}s 
			\right)^{2p}
			\right]\\
			&\le C_3(T,L_R,p) h^{2p}.
			\end{align*}
			For the estimation of the fourth term, one may use the original error term involving $e^n$:
			\begin{align*}
			\E\left[\max_{n=0,1,\ldots,N}(J^n_4)^{2p}\right]
			&\le 
			\E\left[
			\max_{n=0,1,\ldots,N}
			\left(
			\sum_{l=0}^{n-1} \int_{t_l}^{t_{l+1}} 
			\norm{
				U(t_n,s)\left(F_R(X^l)-F_R(X(t_l))\right)
			}_1\,\text{d}s 
			\right)^{2p}
			\right]\\
			&\le 
			\E\left[
			\max_{n=0,1,\ldots,N}
			\left(
			L_R
			\sum_{l=0}^{n-1} 
			\int_{t_l}^{t_{l+1}} 
			\norm{
				X^l-X(t_l)
			}_1
			\,\text{d}s 
			\right)^{2p}
			\right]\\
			&\le 
			(L_RT)^{2p}
			\E\left[
			\max_{n=0,1,\ldots,N}
			\left(
			\norm{
				e^n
			}_1
			\right)^{2p}
			\right]
			.
			\end{align*}
			For the fifth term we use H\"older's inequality and \cite[Lemma 5.4]{MR3166967} (temporal regularity of the mild solution), which yields
			\begin{align*}
			\E\left[\max_{n=0,1,\ldots,N}(J^n_5)^{2p}\right]
			&\le \E\left[
			\max_{n=0,1,\ldots,N}
			\left(
			\sum_{l=0}^{n-1} 
			\int_{t_l}^{t_{l+1}}
			\norm{U(t_n,s)\left(F_R(X(t_l))-F_R(X(s))\right)}_1
			\,\text{d}s 
			\right)^{2p}
			\right]\\
			&\le
			\E\left[
			\max_{n=0,1,\ldots,N}
			\left(
			L_Rh
			\sum_{l=0}^{n-1} 
			\sup_{t_l\le s\le t_{l+1}}
			\norm{
				X(t_l)-X(s)
			}_1
			\right)^{2p}
			\right]\\
			&\le 
			(L_Rh)^{2p}
			\E\left[
			\left(
			\left(
			\sum_{l=0}^{N-1}
			1^{\frac{2p}{2p-1}} 
			\right)^{\frac{2p-1}{2p}}
			\left(
			\sum_{l=0}^{N-1} 
			\sup_{t_l\le s\le t_{l+1}}
			\norm{
				X(t_l)-X(s)
			}_1^{2p}
			\right)^{\frac{1}{2p}}
			\right)^{2p}
			\right]\\
			&=
			(L_Rh)^{2p}
			N^{2p-1}
			\sum_{l=0}^{N-1}
			\E\left[
			\sup_{t_l\le s\le t_{l+1}}
			\norm{
				X(t_l)-X(s)
			}_1^{2p}
			\right]\\
			&\le
			(L_Rh)^{2p}
			N^{2p}
			C(\norm{X_0}_2,T,p,R,\gamma)
			h^p
			= C_5(\norm{X_0}_2,T,L_R,p,\gamma)h^p.
			\end{align*}
			All together, with another use of \cite[Proposition 2.2]{MR3166967} for bounding the term $I_1^n$, 
			we thus obtain
			\begin{align*}
			\E\left[\max_{n=0,1,\ldots,N} \norm{e^n}_1^{2p}\right]
			&\le C \E\left[\max_{n=0,1,\ldots,N} \norm{I_1^n}_1^{2p}\right] 
			+ (L_RT)^{2p}\E\left[\max_{n=0,1,\ldots,N} \norm{e^n}_1^{2p}\right] \\
			&\quad+ C_1h^p 
			+ C_2h^{p}
			+ C_3h^{2p}
			+ C_5h^{p}\\
			&\le Ch^p 
			+ (L_RT)^{2p} \E\left[\max_{n=0,1,\ldots,N} \norm{e^n}_1^{2p}\right].
			\end{align*}
			Now, as in the proof of \cite[Theorem 2]{bcd20}, for $T=T_1$ small enough, i.\,e. such that $(L_RT)^{2p} < 1$, the inequality above gives
			$$\E\left[\max_{n=0,1,\ldots,N} \norm{e^n}_1^{2p}\right] 
			\le \frac{C
			}{1-(L_RT)^{2p}}h^p,$$
			on $[0,T_1]$. In order to iterate this procedure, we impose, if necessary, that $h$ is small enough (or, equivalently, that $N$ is big enough), to ensure that $T_1$ can be chosen as before and as some integer multiple of $h$ (say $T_1 = rh$ for some positive integer $r$), while $T$ is some multiple integer of $T_1$ (say $KT_1 = T$ for some positive integer $K$). To obtain a bound for the error on the longer time interval $[0,T]$, we iterate the procedure above by choosing $T_2 = 2T_1$ and estimate the error on the interval $[T_1,T_2]$. We repeat this procedure, $K$ times, up to the final time $T$. This can be done since the above error estimates are uniform on the intervals $[T_k,T_{k+1}]$ for $k=0,\ldots,K-1$ (with a slight abuse of notation for the time interval):
			$$\E\left[\max_{[T_k,T_{k+1}]}\norm{X^n-X_k(t_n)}^{2p}_1\right] \le C_Eh^p$$
			where $C_E$ is the error constant obtained above, $t_n = nh$ are discrete times in $[T_k,T_{k+1}]$, $X_0(t):=X(t)$ is the exact solution with initial value $X_0$, $X_k(t)$ denotes the exact solution with initial value $\widehat{X}^k$ at time $T_k = kT_1 =(kr)h = t_{kr}$, and $\widehat{X}^k = X_{kr}$ corresponds to numerical solutions at time $T_k$ for $k=0,\ldots,K-1$. For the total error, we thus obtain (details are only written for the first two intervals) 
			\begin{align*}
			\E\left[\max_{n=0,1,\ldots,N} \norm{e^n}_1^{2p}\right]
			=& \E\left[\max_{[0,T]} \norm{X^n - X(t_n)}_1^{2p}\right]
			\leq C 
			\E\left[\max_{[0,T_1]} \norm{X^n - X(t_n)}_1^{2p}\right] \\
			+& C\E\left[\max_{[T_1,T_2]} \norm{X^n - X(t_n)}_1^{2p}\right]
			+\ldots+C
			\E\left[\max_{[T_{K-1},T_K]} \norm{X^n - X(t_n)}_1^{2p}\right]\\
			\leq& C_Eh^p + C_Eh^p + C_L\E\left[\norm{\widehat{X}^1-X(T_1)}_1^{2p}\right]
			+ \ldots
			\leq C_Eh^p + C_LC_Eh^p + \ldots\\
			\leq& C_Eh^p + C_LC_Eh^p + C_L^2C_Eh^p + \cdots + C_L^{K-1}C_Eh^p
			\leq Ch^p,
			\end{align*}
			where $C_L$ is the Lipschitz constant of the exact flow of \eqref{eq:Manakov} from $\mathbb{H}^1$ to itself and the last constant $C$ is independent of $N$ and $h$ with $Nh = T$ for $N$ big enough. This concludes the proof of the theorem.
		\end{proof}
		
		
		\section{Convergence in probability and almost surely in the non-Lipschitz case}\label{sec:convprob}
		Using the same strategy as in \cite{bbd15,bcd20}, one can show convergence in probability of order $1/2$ 
		and almost sure convergence of order $1/2^-$ for the Lie--Trotter splitting scheme \eqref{ltsplit} when applied to the stochastic Manakov equation \eqref{eq:Manakov} with the original cubic nonlinearity.
		
		\begin{proposition}\label{propProba}
			Let $X^0\in\H^6$ and $T>0$.
			Denote by $\tau^*=\tau^*(X_0,\omega)$ the maximum stopping time for the existence
			of a strong adapted solution,
			denoted by $X(t)$, of the stochastic Manakov equation \eqref{eq:Manakov}.
			For all stopping time $\tau<\tau^*\wedge T$ a.s. there exists $h_0>0$ such that we have 
			\begin{equation*}
			\forall h\in (0,h_0),\qquad
			\lim_{C\to\infty}
			\mathbb{P}\left( \max_{0\leq n\leq N_{\tau}} \norm{X^n-X(t_n)}_{\H^1}\geq C h^{1/2} \right)=0, 
			\end{equation*}
			where $X^n$ denotes the numerical solution given by the Lie--Trotter splitting scheme \eqref{ltsplit} with time step $h$ and $N_\tau=\lceil\frac{\tau}{h}\rfloor$. 
		\end{proposition}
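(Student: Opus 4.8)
The plan is to follow the localisation strategy of \cite{bbd15,bcd20} and reduce the statement to the strong estimate of Theorem~\ref{thm:LTSplConvStrong} via truncation. For $R>0$ set
\[
\tau_R:=\inf\bigl\{t\in[0,T]\,:\,\norm{X(t)}_{\H^1}^2>R\bigr\}\wedge T,
\]
with the convention $\inf\emptyset=T$. Since $X$ is $\H^1$-continuous on $[0,\tau^*)$ and its $\H^1$-norm blows up at $\tau^*$ (see \cite{MR3024974}), the family $(\tau_R)_{R>0}$ is non-decreasing with $\tau_R\nearrow\tau^*\wedge T$ almost surely; as $\tau<\tau^*\wedge T$ a.s., dominated convergence gives $\PP(\tau_R<\tau)\to 0$ as $R\to\infty$. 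On $\{\tau_R\ge\tau\}$ one has $\norm{X(t)}_{\H^1}^2\le R<2R$ for all $t\in[0,\tau]$, so $F_{2R}$ and $F$ agree along the exact trajectory there; by pathwise uniqueness for \eqref{mildManakov} this forces $X(t)=X_{2R}(t)$ on $[0,\tau]$, where $X_{2R}$ denotes the global $\H^1$-solution of the cut-off equation with nonlinearity $F_{2R}$ and datum $X_0$.

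The central step is to compare the numerical solution $X^n$ produced by \eqref{ltsplit} with the cubic $F$ to the cut-off numerical solution $X_{2R}^n$ produced with $F_{2R}$. These recursions generate identical iterates as long as, at each step $l$, both the iterate $X^l$ and the whole sub-flow $Y^l(s)$, $s\in[t_l,t_{l+1}]$, stay in the set $\{\norm{\cdot}_{\H^1}^2\le 2R\}$ on which $\theta_{2R}\equiv1$. Since $|Y^l(s,x)|=|X^l(x)|$ pointwise (as in the proof of Lemma~\ref{thm:LTPres}), one obtains the explicit bound $\norm{Y^l(s)}_{\H^1}\le\norm{X^l}_{\H^1}+Ch\norm{X^l}_{\H^1}^3$, and likewise for $Y^l_{2R}$; together with the almost sure boundedness of the cut-off scheme from Lemma~\ref{lemma:bounds}, this shows that, for $h$ small enough (depending on $R$), on $\{\tau_R\ge\tau\}$ the coincidence $X^n=X^n_{2R}$ for all $n\le N_\tau$ can fail only if $\max_{0\le l\le N}\norm{X^l_{2R}-X_{2R}(t_l)}_{\H^1}\ge\delta_R$ for a fixed threshold $\delta_R=c_0\sqrt R>0$, where $N:=\lceil T/h\rceil\ge N_\tau$ (we may enlarge the index set since $\tau\le T$). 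Splitting $\norm{X^n-X(t_n)}_{\H^1}$ through the triangle inequality on $\{\tau_R\ge\tau\}$, where $X(t_n)=X_{2R}(t_n)$, we thus obtain for all $R>0$, all $C>0$ and all sufficiently small $h$
\begin{align*}
\PP\Bigl(\max_{0\le n\le N_\tau}\norm{X^n-X(t_n)}_{\H^1}\ge Ch^{1/2}\Bigr)
&\le \PP(\tau_R<\tau)
+\PP\Bigl(\max_{0\le n\le N}\norm{X^n_{2R}-X_{2R}(t_n)}_{\H^1}\ge Ch^{1/2}\Bigr)\\
&\quad+\PP\Bigl(\max_{0\le n\le N}\norm{X^n_{2R}-X_{2R}(t_n)}_{\H^1}\ge\delta_R\Bigr).
\end{align*}

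Finally, Markov's inequality at order $2p$ combined with Theorem~\ref{thm:LTSplConvStrong} (with cut-off level $2R$, moment exponent $2p$, and $X_0\in\H^6$) provides $h_0>0$ and $\kappa_R:=C(\norm{X_0}_6,T,L_{2R},p,\gamma)$ with $\E\bigl[\max_{0\le n\le N}\norm{X^n_{2R}-X_{2R}(t_n)}_{\H^1}^{2p}\bigr]\le\kappa_R h^p$ for $h\in(0,h_0)$; hence the second probability above is at most $\kappa_R C^{-2p}$ and the third is at most $\kappa_R\,\delta_R^{-2p}\,h^p$, so that
\[
\PP\Bigl(\max_{0\le n\le N_\tau}\norm{X^n-X(t_n)}_{\H^1}\ge Ch^{1/2}\Bigr)\le \PP(\tau_R<\tau)+\kappa_R\,C^{-2p}+\kappa_R\,\delta_R^{-2p}\,h^p .
\]
Letting $C\to\infty$ for fixed $h$ and $R$, and then $R\to\infty$ (the last term being controlled by the smallness of $h$), the right-hand side becomes arbitrarily small, which yields the asserted convergence in probability of order $1/2$. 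We expect the comparison step to be the main obstacle: one must show, in a self-consistent way, that on the localised event neither the iterates $X^l$ nor the sub-step flows $Y^l(s)$ of the \emph{untruncated} scheme ever leave the no-cut-off region, which requires combining the explicit form of the cubic sub-flow, the almost sure regularity bounds of Lemma~\ref{lemma:bounds}, and the strong estimate of Theorem~\ref{thm:LTSplConvStrong} simultaneously; the remaining steps are routine localisation and Chebyshev arguments.
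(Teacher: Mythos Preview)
Your proposal is correct and rests on the same localisation-and-truncation strategy as the paper: bound the exact solution on a high-probability event, use the explicit $\H^1$ control of the cubic sub-flow $Y^l$ to identify the untruncated scheme with the cut-off scheme there, and conclude via Markov's inequality and Theorem~\ref{thm:LTSplConvStrong}. The paper organises this in two stages---first establishing plain convergence in probability via the stopping index $n_\varepsilon=\min\{n:\norm{X^n-X(t_n)}_1\ge\varepsilon\}$ (arguing inductively that $X^n=X^n_{2R_0}$ up to $n_\varepsilon$), and then deducing the order-$1/2$ statement from the resulting a priori high-probability bound on $\max_n\norm{X^n}_1$---whereas you compress this into a single step through the threshold event $\{\max_l\norm{X^l_{2R}-X_{2R}(t_l)}_1\ge\delta_R\}$. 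Your inductive coincidence argument (bound $\norm{X^l_{2R}}_1$ by $\sqrt{R}+\delta_R$ via closeness to $X_{2R}(t_l)=X(t_l)$, then propagate through $Y^l$ for $h$ small) is exactly the mechanism the paper uses with $n_\varepsilon$ in place of $\delta_R$. One caveat on your closing sentence: you cannot literally send $R\to\infty$ after $C\to\infty$ with $h$ held fixed, because $\kappa_R$ (coming from a Gr\"onwall estimate with Lipschitz constant $L_{2R}$) typically grows too fast to be compensated by $\delta_R^{-2p}$; what your bound actually delivers---and what the paper's proof also delivers---is that for each $\varepsilon>0$ one fixes $R$ first, then chooses $h_0=h_0(R,\varepsilon)$, and finally lets $C\to\infty$.
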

		
		\begin{proof}
			For $R>0$, let us denote by $X_R$, resp. $X_R^n$, the exact, resp. numerical, solutions to the stochastic Manakov equation 
			\eqref{mildManakov} with a truncated nonlinearity $F_R$.
			
			Fix $X^0\in\H^6$, $T>0$, $\varepsilon\in(0,1)$.
			Let $\tau$ be a stopping time such that a.s. $\tau<\tau^*\wedge T$.
			By \cite[Theorem~1.2]{MR3024974} there exists an $R_0>{\color{blue}1}$ such that 
			$\displaystyle\mathbb{P}\left(\sup_{t\in[0,\tau]}\norm{X(t)}_1\geq R_0-1\right)\leq\varepsilon/2$.
			Observe that one has the inclusion 
			\begin{align*}
			\left\{ \max_{0\leq n\leq N_\tau}\norm{X^n-X(t_n)}_1\geq\varepsilon  \right\}&\subset \left\{ \max_{0\leq n\leq N_\tau}\norm{X(t_n)}_1\geq R_0-1 \right\} \\
			&\cup \left( \left\{ \max_{0\leq n\leq N_\tau}\norm{X^n-X(t_n)}_1\geq\varepsilon \right\} \cap \left\{ \max_{0\leq n\leq N_\tau}\norm{X(t_n)}_1<R_0-1 \right\} \right).
			\end{align*}
			Taking probabilities, we obtain
			\begin{align*}
			\PP&\left( \left\{ \max_{0\leq n\leq N_\tau}\norm{X^n-X(t_n)}_1\geq\varepsilon  \right\} \right)\\
			&\leq\varepsilon/2+
			\PP\left( \left\{ \max_{0\leq n\leq N_\tau}\norm{X^n-X(t_n)}_1\geq\varepsilon \right\} \cap \left\{ \max_{0\leq n\leq N_\tau}\norm{X(t_n)}_1<R_0-1 \right\} \right).
			\end{align*}
			In order to estimate the terms on the right-hand side, we define the random variable
			$n_\varepsilon:=\min\{n\in\{0,\dots,N_\tau\}\colon \norm{X^n-X(t_n)}_1\geq\varepsilon\}$,
			with the convention that $n_\varepsilon=N_\tau+1$ if the set is empty.
			If $\displaystyle\max_{0\leq n\leq N_\tau}\norm{X(t_n)}_1<R_0-1$ then we have by the triangle inequality 
			$$
			\max_{0\leq n\leq n_\varepsilon-1}\norm{X^n}_1
			=\max_{0\leq n\leq n_\varepsilon-1}\norm{X^n-X(t_n)+X(t_n)}_1\leq\varepsilon+R_0-1\leq R_0.
			$$
			From the definition of the Lie--Trotter splitting scheme \eqref{ltsplit}, the isometry property of $U_{h,n+1}$, the fact that $F$ below is the cubic nonlinearity, 
			and Lemma~\ref{lemma:bounds} follows 
			\begin{align}\label{lalaland}
			\norm{X^{n_\varepsilon}}_1
			&= \norm{Y^{n_\varepsilon-1}(t_{n_\varepsilon})}_1
			\le \norm{Y^{n_\varepsilon-1}(t_{n_\varepsilon-1})}_1 + \int_{t_{n_\varepsilon-1}}^{t_{n_\varepsilon}}\norm{F(Y^{n_\varepsilon-1}(s))}_1\,\dd s\nonumber\\
			&\le \norm{X^{n_\varepsilon-1}}_1 + C\int_{t_{n_\varepsilon-1}}^{t_{n_\varepsilon}}\norm{Y^{n_\varepsilon-1}(s)}_1^3\,\dd s\quad\text{a.s}.
			\end{align}
			Next, we give an explicit bound on $Y^{n_\varepsilon-1}(s)$
			for $t_{n_\varepsilon-1}\leq s\leq t_{n_\varepsilon}$, in order to
			show that $X^{n_\varepsilon}$ is also explicitly bounded. 
			To do so, we use the explicit form of $Y^{n_\varepsilon-1}$,
			as solution to the ODE system $i\text dY+|Y|^2Y\text dt=0$.
			First, we recall that $|Y_1(t,x)|^2+|Y_2(t,x)|^2$ is pointwise preserved by the flow, see the proof of Lemma~\ref{thm:LTPres}. This implies in particular that, for all $s\in[t_{n_\varepsilon-1},t_{n_\varepsilon}]$,
			$$
			\norm{Y^{n_\varepsilon-1}(s)}_{\L^2}^2=\norm{Y^{n_\varepsilon-1}(t_{n_\varepsilon-1})}_{\L^2}^2\quad{a.s}.
			$$
			Moreover, this preservation property allows for the exact solution of the ODE on
			$(t_{n_\varepsilon-1},t_{n_\varepsilon})$ to be written for all $s\in[t_{n_\varepsilon-1},t_{n_\varepsilon}]$ as
			$$
			Y^{n_\varepsilon-1}(s)=e^{i(s-t_{n_\varepsilon-1})|Y^{n_\varepsilon-1}(t_{n_\varepsilon-1})|^2}\begin{pmatrix}Y^{n_\varepsilon-1}_1(t_{n_\varepsilon-1})\\Y^{n_\varepsilon-1}_2(t_{n_\varepsilon-1})\end{pmatrix}. 
			$$
			Then, the first spatial derivative of $Y^{n_\varepsilon-1}$ can be computed as follows 
			\begin{align*}
			\partial_xY^{n_\varepsilon-1}(s)&=e^{i(s-t_{n_\varepsilon-1})|Y^{n_\varepsilon-1}(t_{n_\varepsilon-1})|^2}\left\{ \begin{pmatrix} \partial_x Y^{n_\varepsilon-1}_1(t_{n_\varepsilon-1}) \\ \partial_x Y^{n_\varepsilon-1}_2(t_{n_\varepsilon-1}) \end{pmatrix} \right.\\
			&\quad\left.+2i(s-t_{n_\varepsilon-1})\text{Re}\left( \overline{Y^{n_\varepsilon-1}_1}(t_{n_\varepsilon-1})\partial_xY^{n_\varepsilon-1}_1(t_{n_\varepsilon-1})+\overline{Y^{n_\varepsilon-1}_2}(t_{n_\varepsilon-1})\partial_xY^{n_\varepsilon-1}_2(t_{n_\varepsilon-1})\right)
			\begin{pmatrix} Y^{n_\varepsilon-1}_1(t_{n_\varepsilon-1}) \\ Y^{n_\varepsilon-1}_2(t_{n_\varepsilon-1}) \end{pmatrix} 
			\right\}.
			\end{align*}
			Using Cauchy--Schwarz inequality and the fact that $H^1$ is an algebra, 
			one obtains that, for some constant $C>0$, 
			$$
		\norm{\partial_xY^{n_\varepsilon-1}(s)}^2_{\L^2}\leq 2\norm{\partial_xY^{n_\varepsilon-1}(t_{n_\varepsilon-1})}_{\L^2}^2
		\left(1+C^2(s-t_{n_{\varepsilon-1}})^2\norm{Y^{n_\varepsilon-1}(t_{n_\varepsilon-1})}_{1}^4\right).
		$$
        Using the above estimates and the definition of the $\H^1$ norm, one arrives at the following bound
        $$  
        \norm{Y^{n_\varepsilon-1}(s)}^2_{1}\leq 2\norm{Y^{n_\varepsilon-1}(t_{n_\varepsilon-1})}_{1}^2
		\left(1+C^2(s-t_{n_{\varepsilon-1}})^2\norm{Y^{n_\varepsilon-1}(t_{n_\varepsilon-1})}_{1}^4\right).  
        $$
        Taking the square root of the above and using the fact that $\sqrt{a^2+b^2}\leq a+b$ for positive real numbers $a,b$, one gets
			$$
        \norm{Y^{n_\varepsilon-1}(s)}_1\leq \sqrt{2}\norm{Y^{n_\varepsilon-1}(t_{n_\varepsilon-1})}_{1}\left(1+C|s-t_{n_\varepsilon-1}|\norm{Y^{n_\varepsilon-1}(t_{n_\varepsilon-1})}^2_{1}\right).
			$$
			Now, as $Y^{n_\varepsilon-1}(t_{n_\varepsilon-1})=X^{n_\varepsilon-1}$ is bounded in $\H^1$ by $R_0$,
			if we assume that $h$ is small enough to ensure that $C h\left( \sqrt{2}R_0(1+ChR_0^2)\right)^3\leq R_0$, then
			we have $X^n=X_{2R_0}^n$ for $0\leq n\leq n_\varepsilon$ by \eqref{lalaland}.
			
			If $n_\varepsilon\leq N_\tau$, then $\norm{X^{n_\varepsilon}_{2R_0}-X_{2R_0}(t_{n_\varepsilon})}_1\geq\varepsilon$ thanks to the definition of $n_\varepsilon$. 
			Therefore we get $\displaystyle\max_{0\leq n\leq N_\tau}\norm{X^n_{2R_0}-X_{2R_0}(t_n)}_1\geq\varepsilon$. 
			Furthermore, by definition of $n_\varepsilon$, we have 
			$$
			\left\{ \max_{0\leq n\leq N_\tau}\norm{X^n-X(t_n)}_1\geq\varepsilon \right\}\cap\left\{ n_\varepsilon>N_\tau \right\}=\varnothing.$$
			We then deduce that 
			$$
			\left\{ \max_{0\leq n\leq N_\tau}\norm{X^n-X(t_n)}_1\geq\varepsilon \right\}=\left\{ \max_{0\leq n\leq N_\tau}\norm{X^n-X(t_n)}_1\geq\varepsilon \right\}
			\cap\left\{ n_\varepsilon\leq N_\tau \right\}.
			$$
			Combining the above, using Markov's inequality as well as the strong error estimates from Theorem~\ref{thm:LTSplConvStrong}, since $\tau<T$ a.s., there exists $C>0$ such that
			\begin{align*}
			&\PP\left( \max_{0\leq n\leq N_\tau}\norm{X^n-X(t_n)}_1\geq\varepsilon, n_\varepsilon\leq N_\tau,\max_{0\leq n\leq N_\tau}\norm{X(t_n)}_1<R_0-1 \right)\\
			&\leq\PP\left( \max_{0\leq n\leq N_\tau}\norm{X_{2R_0}^n-X_{2R_0}(t_n)}_1\geq\varepsilon \right)
			\leq\frac1{\varepsilon^{2p}}\E\left[ \max_{0\leq n\leq N_\tau}\norm{X^n_{2R_0}-X_{2R_0}(t_n)}_1^{2p} \right]\leq\frac1{\varepsilon^{2p}}Ch^p.
			\end{align*}
			This last term is smaller than $\varepsilon/2$ for $h$ small enough. 
			All together we obtain
			$$
			\PP\left( \max_{0\leq n\leq N_\tau}\norm{X^n-X(t_n)}_1\geq\varepsilon \right)\leq\frac\varepsilon2+\frac\varepsilon2=\varepsilon,
			$$
			and thus convergence in probability. 
			
			To get the order of convergence in probability,
			we choose $R_1\geq R_0-1$ such that for all $h>0$ small enough, 
			$\displaystyle\PP\left( \max_{0\leq n\leq N_\tau}\norm{X^n}_1\geq R_1\right)\leq\frac\varepsilon2$.
			As above, for all positive real number $C$, we have 
			\begin{align*}
			\left\{ \max_{0\leq n\leq N_\tau}\norm{X^n-X(t_n)}_1\geq Ch^{1/2} \right\}&\subset \left\{\max_{0\leq n\leq N_\tau}\norm{X(t_n)}_1\geq R_1 \right\}\\
			&\cup\bigg( \left\{ \max_{0\leq n\leq N_\tau}\norm{X^n-X(t_n)}_1\geq Ch^{1/2} \right\}\\
			&\cap \left\{\max_{0\leq n\leq N_\tau}\norm{X(t_n)}_1<R_1\right\} \bigg).
			\end{align*}
			Taking probabilities and using Markov's inequality as well as the strong error estimate from Theorem~\ref{thm:LTSplConvStrong}, we obtain  
			\begin{align*}
			\PP\left( \left\{ \max_{0\leq n\leq N_\tau}\norm{X^n-X(t_n)}_1\geq Ch^{1/2} \right\} \right)&\leq\frac\varepsilon2+
			\PP\left( \left\{ \max_{0\leq n\leq N_\tau}\norm{X_{4R_1}^n-X_{4R_1}(t_n)}_1\geq Ch^{1/2} \right\} \right)\\
			&\leq\frac\varepsilon2+\frac{K(\norm{X_0}_6,T,4R_1,p,\gamma)}{C^{2p}},
			\end{align*}
			since $\tau\leq T$ almost surely.
			For $C$ large enough, we infer
			$$
			\PP\left( \left\{ \max_{0\leq n\leq N_\tau}\norm{X^n-X(t_n)}_1\geq Ch^{1/2} \right\} \right)\leq\frac\varepsilon2+\frac\varepsilon2=\varepsilon,
			$$
			uniformly for $h<h_0$.
			This shows that the order of convergence in probability of the Lie--Trotter splitting scheme is $1/2$.
		\end{proof}
		Using the results above, one arrives at the following proposition, which establishes
		that the Lie--Trotter splitting scheme has almost sure convergence order $1/2^-$.
		\begin{proposition}\label{prop-as}
			Under the assumptions of Proposition~\ref{propProba}, for all $\delta\in\left(0,\frac12\right)$
			and $T>0$,
			there exists a random variable $K_\delta(T)$ such that for all stopping times $\tau$ with $\tau<\tau^*\wedge T$, we have
			$$
			\max_{n=0,\ldots, N_\tau}\norm{X^n(\omega)-X(t_n,\omega)}_{\H^1}\leq K_\delta(T,\omega)h^\delta\quad\quad\quad\mathbb{P}-a.s.,
			$$
			for $h>0$ small enough.
		\end{proposition}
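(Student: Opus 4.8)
\emph{Overall strategy.} The plan is to transfer the strong estimate of Theorem~\ref{thm:LTSplConvStrong} for the truncated equation to an almost-sure statement by a Borel--Cantelli argument, and then to remove the truncation by localizing in $R$, in the spirit of \cite{bbd15,bcd20}. It suffices to treat stepsizes of the form $h=h_N:=T/N$, $N\in\N$, which is the setting in which the scheme and Theorem~\ref{thm:LTSplConvStrong} are stated.

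\emph{A tail bound for the truncated scheme.} Fix $R\in\N$ and $\delta\in(0,1/2)$, and pick an integer $p=p(\delta)$ with $p(1-2\delta)>1$. By Markov's inequality and Theorem~\ref{thm:LTSplConvStrong} applied with the cut-off $F_R$ and this value of $p$, for all $N$ large enough,
\[
\PP\left(\max_{0\le n\le N}\norm{X_R^n-X_R(t_n)}_1\ge h_N^{\delta}\right)
\le h_N^{-2p\delta}\,\E\left[\max_{0\le n\le N}\norm{X_R^n-X_R(t_n)}_1^{2p}\right]
\le C(R)\,h_N^{\,p(1-2\delta)}.
\]
Since $\sum_{N\ge1}h_N^{\,p(1-2\delta)}=T^{\,p(1-2\delta)}\sum_{N\ge1}N^{-p(1-2\delta)}<\infty$, the Borel--Cantelli lemma gives that, almost surely, $\max_{0\le n\le N}\norm{X_R^n-X_R(t_n)}_1< h_N^{\delta}$ for all $N$ large enough. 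Hence the random variable
\[
K_\delta^R:=\sup_{N\ge1}h_N^{-\delta}\max_{0\le n\le N}\norm{X_R^n-X_R(t_n)}_1
\]
is almost surely finite (the finitely many remaining terms are a.s. finite because $X_R$ and $X_R^n$ lie in $\H^1$), and $\max_{0\le n\le N}\norm{X_R^n-X_R(t_n)}_1\le K_\delta^R h_N^{\delta}$ for every $N$.

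\emph{Localization and conclusion.} Since $\tau<\tau^*$ almost surely, $\sup_{t\in[0,\tau]}\norm{X(t)}_1<\infty$ almost surely, so the integer-valued random variable $R^*:=\min\{R\in\N\colon\sup_{t\in[0,\tau]}\norm{X(t)}_1<R-1\}$ is almost surely finite. On the event $\{R^*=R\}$ the exact solution coincides on $[0,\tau]$ with the truncated solution $X_{2R}$; moreover, arguing exactly as in the proof of Proposition~\ref{propProba} — using that $U_{h,n+1}$ is an isometry on $\H^1$, the explicit form of $Y^n$ as solution of $i\,\dd Y+|Y|^2Y\,\dd t=0$ together with the pointwise conservation of $|Y^n|^2$, and the bound from the previous step to control $\norm{X^n}_1$ along an induction on $n$ — one checks that there is $h_0>0$ (depending on $R$ and on $K_\delta^{2R}$) such that for $h<h_0$ one has $X^n=X_{2R}^n$ for all $0\le n\le N_\tau$. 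Consequently, on $\{R^*=R\}$ and for $h<h_0$,
\[
\max_{0\le n\le N_\tau}\norm{X^n-X(t_n)}_1
=\max_{0\le n\le N_\tau}\norm{X_{2R}^n-X_{2R}(t_n)}_1
\le K_\delta^{2R}\,h^{\delta}.
\]
Setting $K_\delta(T,\omega):=K_\delta^{2R^*(\omega)}(\omega)$, which is almost surely finite since a countable union over $R$ of null sets is null, yields the assertion for all $h>0$ small enough, with a threshold and a constant that are genuinely random.

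\emph{Main obstacle.} The step I expect to be the main obstacle is the localization: one has to guarantee that the untruncated numerical iterates do not escape the ball of radius comparable to $R^*$ in $\H^1$ before the index reaches $N_\tau$, so that they really agree with the truncated ones up to $N_\tau$. This is where the one-step a priori bound on $Y^n$ from the proof of Proposition~\ref{propProba} is used, run inductively and fed by the error estimate $\norm{X_{2R}^n-X_{2R}(t_n)}_1\le K_\delta^{2R}h^\delta$ together with $\norm{X(t_n)}_1<R-1$; the unavoidable price is that the admissible stepsize threshold $h_0$, and hence $K_\delta(T,\omega)$, depend on $\omega$.
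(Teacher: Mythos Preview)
Your proof is correct and follows essentially the same route as the paper's: Markov's inequality on the strong estimate for the truncated problem, a Borel--Cantelli argument (the paper invokes \cite[Lemma~2.8]{MR1873517} for this step) to obtain an almost-sure rate at each fixed cut-off level, and then localization in $R$ using the one-step $\H^1$ bound on $Y^n$ established in the proof of Proposition~\ref{propProba}. The only organizational difference is that the paper first proves almost-sure convergence (without rate) via a contradiction on $n_\varepsilon$ and then deduces the rate from a resulting uniform bound $R_1(\omega)$ on the numerical iterates, whereas you fold both stages into a single induction fed directly by $K_\delta^{2R}$; the ingredients, the random threshold $h_0$, and the random constant are the same in substance.
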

		\begin{proof}
			The proof uses similar arguments as the corresponding proof in \cite{bcd20}. 
			Let $\tau$ be a stopping time such that $\tau<\tau^*\wedge T$ almost surely.
			Fix $R>0$ and $p>1$.
			Using the strong error estimate from Theorem~\ref{thm:LTSplConvStrong} and Markov's inequality, one gets
			positive $h_0$ and $C$, which depend on $T$ but not on $\tau$ itself, such that
			$$
			\forall h\in (0,h_0),\qquad
			\PP\left(\max_{0\leq n\leq N_\tau}\norm{X_R^n-X_R(t_n)}_1>h^\delta\right)\leq Ch^{p(1-2\delta)}.
			$$
			Using \cite[Lemma~2.8]{MR1873517}, one then obtains that, choosing $p\geq1$ sufficiently
			large to ensure that $p(1-2\delta)>1$,
			there exists a positive random variable $K_\delta(R,\gamma,T,p,\cdot)$ such that
			\begin{equation}\label{asest}
			\mathbb{P}-a.s., \quad \forall h\in(0,h_0),\qquad
			\max_{0\leq n\leq N_\tau}\norm{X_R^n-X_R(t_n)}_1 \leq K_\delta(R,\gamma,T,p,\omega)h^\delta.
			\end{equation}
			After this preliminary observation, we shall proceed as in the proof of Proposition~\ref{propProba}. 
			We know that, since $\tau<\tau^*$ a.s., there exists a random variable $R_0$ such that
			$$
			\sup_{0\leq t\leq\tau}\norm{X(t)}_1\leq R_0(\omega)\quad\mathbb{P}-\text{a.s.}.
			$$
			Let now $\varepsilon\in(0,1)$ and $h$ small enough ($h\leq 3R_0^{-2}(\omega)$). Assume by contradiction that 
			$$
			\max_{0\leq n\leq N_\tau}\norm{X^n-X(t_n)}_1\geq\varepsilon.
			$$
			Define $n_\varepsilon:=\min\{n\colon\norm{X^n-X(t_n)}_1\geq\varepsilon\}$. By definition of $R_0$ and $h$, we have that $\norm{X^n}_1\leq R_0$ a.s. for $0\leq n<n_\varepsilon-1$. 
			Hence, $\norm{X^{n_\varepsilon}}_1\leq 2R_0$ and so the numerical solution equals to the numerical solution of the truncated equation 
			$X^n=X_{2R_0}^n$ for $n=0,1,\ldots,n_\varepsilon$. 
			We thus obtain that $\displaystyle\max_{0\leq n\leq N_\tau}\norm{X^n_{2R_0}-X_{2R_0}(t_n)}_1\geq\varepsilon$ for $h$ small enough. 
			This contradicts \eqref{asest} with $R=2R_0$. Therefore, we have almost sure convergence. 
			
			To get the order of almost sure convergence, we proceed similarly as in the proof of Proposition~\ref{propProba}. 
			From the above, we have for $\omega$ in a set of probability one and all
			$\varepsilon>0$, there exists $h_0>0$ such that for all $h\leq h_0$, 
			$\displaystyle\max_{0\leq n\leq N_\tau}\norm{X^n-X(t_n)}_1\leq\varepsilon$. Thus, there exists $R_1(\omega)>R_0(\omega)$ such that 
			$\displaystyle\max_{0\leq n\leq N_\tau}\norm{X^n}_1\leq R_1(\omega)$. 
			
			If now $h\leq 3R_1^{-2}:=h_0$, we obtain from \eqref{asest} that
			\begin{align*}
			\max_{0\leq n\leq N_\tau}\norm{X^n-X(t_n)}_1=\max_{0\leq n\leq N_\tau}\norm{X^n_{R_1}-X_{R_1}(t_n)}_1\leq K_\delta(R_1,\gamma,T,p,\omega)h^\delta.
			\end{align*}
			This shows that the order of a.s. convergence of the exponential integrator is $\frac12-$.
		\end{proof}
		
		
		\section{Numerical experiments}\label{sec-numexp}
		In this section, we present various numerical experiments in order
                to illustrate the main properties of the Lie--Trotter scheme \eqref{ltsplit},
                denoted by \textsc{LT} below, and to compare it with other time integrators from the literature  
                for the stochastic Manakov system. 
                We start by numerically illustrating 
		various types of convergence (strong, in probability, and almost-suerly) of various time integrators. 
		Then, we focus on the preservation of the $\L^2$-norm and computational costs. 
		Finally, we consider stochastic evolution of deterministric solitons and we discuss the possible 
		occurrence of blowup of solutions to the stochastic Manakov equation.
		
		Let us note that, in order to be able to consider soliton solutions (see below for precise details), we slightly modify 
		the SPDE \eqref{eq:Manakov} with a factor $1/2$ in front of the second order spatial derivative: 
		\begin{equation}
		\label{eq:halfManakov}
		i\text{d}X
		+ \frac{1}{2}\partial^2_x X \,\text{d}t
		+ i \sqrt{\gamma} \sum_{k=1}^{3} \sigma_k \partial_x X \circ \text{d} W_k
		+ |X|^{2} X \,\text{d}t= 0.
		\end{equation}
		We also introduce the following numerical schemes in order to compare their performance:
		\begin{itemize}
			\item The nonlinearly implicit Crank--Nicolson scheme from \cite{MR3166967}
			\begin{equation}
			\label{CN}\tag{CN}
			X^{n+1}=X^n-H_{h,n}X^{n+1/2}+ihG(X^n,X^{n+1}),
			\end{equation}
			where $G(X^n,X^{n+1})=\frac12\left(|X^n|^2+|X^{n+1}|^2\right)X^{n+1/2}$ and $X^{n+1/2}=\frac{X^{n+1}+X^n}2$.
			\item The exponential integrator studied in \cite{bcd20}
			\begin{equation}
			\label{EXP}\tag{EXP}
			X^{n+1}=U_{h,n}\left(X^n+ihF(X^n)\right).
			\end{equation}
			\item The relaxation scheme presented in \cite{MR3166967}
			\begin{equation}
			\label{relax}\tag{Relax}
			i\left(X^{n+1}-X^n\right)+H_{h,n}\left(\frac{X^{n+1}+X^n}{2} \right)+\Phi^{n+1/2}\left(\frac{X^{n+1}+X^n}{2} \right)=0,
			\end{equation}
			where $\Phi^{n+1/2}=2|X^n|^2-\Phi^{n-1/2}$ with $\Phi^{-1/2}=|X_0|^2$.
		\end{itemize}
		Observe that the definitions of the quantities $H_{h,n}$ and $U_{h,n}$ have to be modified in order to take into account the factor $1/2$ in equation \eqref{eq:halfManakov}.
		
		Equation \eqref{eq:halfManakov} allows the formation of solitons in the deterministic case (i.\,e. for $\gamma=0$). 
		Such solutions are given by the initial value \cite{Gazeau:13,HASEGAWA2004241}
		\begin{equation}
		\label{eq:hasegawa}
		X_0(\eta,\kappa,\alpha,\tau,\theta,\phi_1,\phi_2) =
		X(x,0) =
		\begin{pmatrix}
		\cos(\theta/2)\exp({i\phi_1})\\
		\sin(\theta/2)\exp({i\phi_2})
		\end{pmatrix}
		\eta\sech(\eta x)\exp({-i\kappa(x-\tau)+i\alpha}),
		\end{equation} 
		where $\eta,\kappa,\alpha,\tau,\theta,\phi_1,\phi_2\in\R$,
		and take the form
		\begin{equation}
		\label{eq:soliton}
		X(x,t) =
		\begin{pmatrix}
		\cos(\theta/2)\exp({i\phi_1})\\
		\sin(\theta/2)\exp({i\phi_2})
		\end{pmatrix}
		\eta
		\sech(\eta (x-\tau(t)))
		\exp\left({-i\kappa(x-\tau(t))+
			i\alpha(t)}
		\right),
		\end{equation}
		where
		$\tau(t) = \tau_0 - \kappa t$, 
		$\alpha(t) = \alpha_0 + \frac{1}{2}\left(\eta^2 + \kappa^2\right)t$, and
		$\tau_0,\alpha_0\in\R$.

		For the numerical experiments we follow a few standards that will hold unless stated otherwise.
		We consider the SPDE 
		\eqref{eq:halfManakov} with 
		$\gamma=1$
		on a bounded interval $[-a,a]$ with a sufficiently large $a>0$, with homogeneous Dirichlet boundary conditions,
		and over the time interval $[0,T]$, $T>0$. 
		The spatial discretisation is done by uniform finite differences with mesh size denoted by $\Delta x$. 
		The initial condition for the SPDE is given by equation \eqref{eq:hasegawa} with the parameters $\alpha=\tau=\phi_1=\phi_2=\kappa=0$ and $\theta=\pi/4$, $\eta=1$.
		Lastly, all experiments will per sample, whenever possible, use a common Brownian motion for each numerical scheme and time discretization.

		
		%
		%
		%
		%
		%
		
		\subsection{Strong convergence}
		In this subsection we will numerically demonstrate the mean-square orders of convergence of the four numerical schemes seen above. 
		The strong order of convergence has been shown to be $1/2$, in the case of a cut-off nonlinearity, for 
		the Lie--Trotter splitting scheme, the exponential integrator, and 
		the Crank--Nicolson scheme in Theorem~\ref{thm:LTSplConvStrong} above, 
		\cite[Theorem 2]{bcd20}, and \cite[Proposition 3.4]{MR3166967}, respectively.
		
		To illustrate these strong orders of convergence, we consider discretizations of equation \eqref{eq:halfManakov} with the following parameters: 
		$a = 50$, $\Delta x = 0.05$, $T = 1$, 
		$N = 2^k$, $k=10,11,\ldots,16$, and
		$h = T/N$. 
		We approximate the exact solution using a reference solution,
		denoted by $\widetilde{X}$, simulated using the Lie--Trotter splitting scheme with 
		$N^\text{ref}=2^{18}$ and time step size 
		$h^\text{ref} = T/N^\text{ref}$. 
		We then compute the mean-square errors
		$$
		e_N := \mathbb{E}\left[\max_{n=1,\ldots,N}\norm{X^n - \widetilde{X}(t_n)}_{1}^2\right]
		$$
		at the coarse time grid $\left\{t_n\right\}_{n=1}^N = \left\{nh \right\}_{n=1}^N$. 
		The expectations are approximated using $300$ samples (we have checked that this number of samples was enough).
		The results are presented in Figure~\ref{fig:strongConv}, where it can be observed that  
		all four numerical schemes demonstrate the mean-square rate of convergence $1/2$.
		
		\begin{figure}[h!]
			\begin{center}
				\includegraphics*[width =.4\textwidth,keepaspectratio]{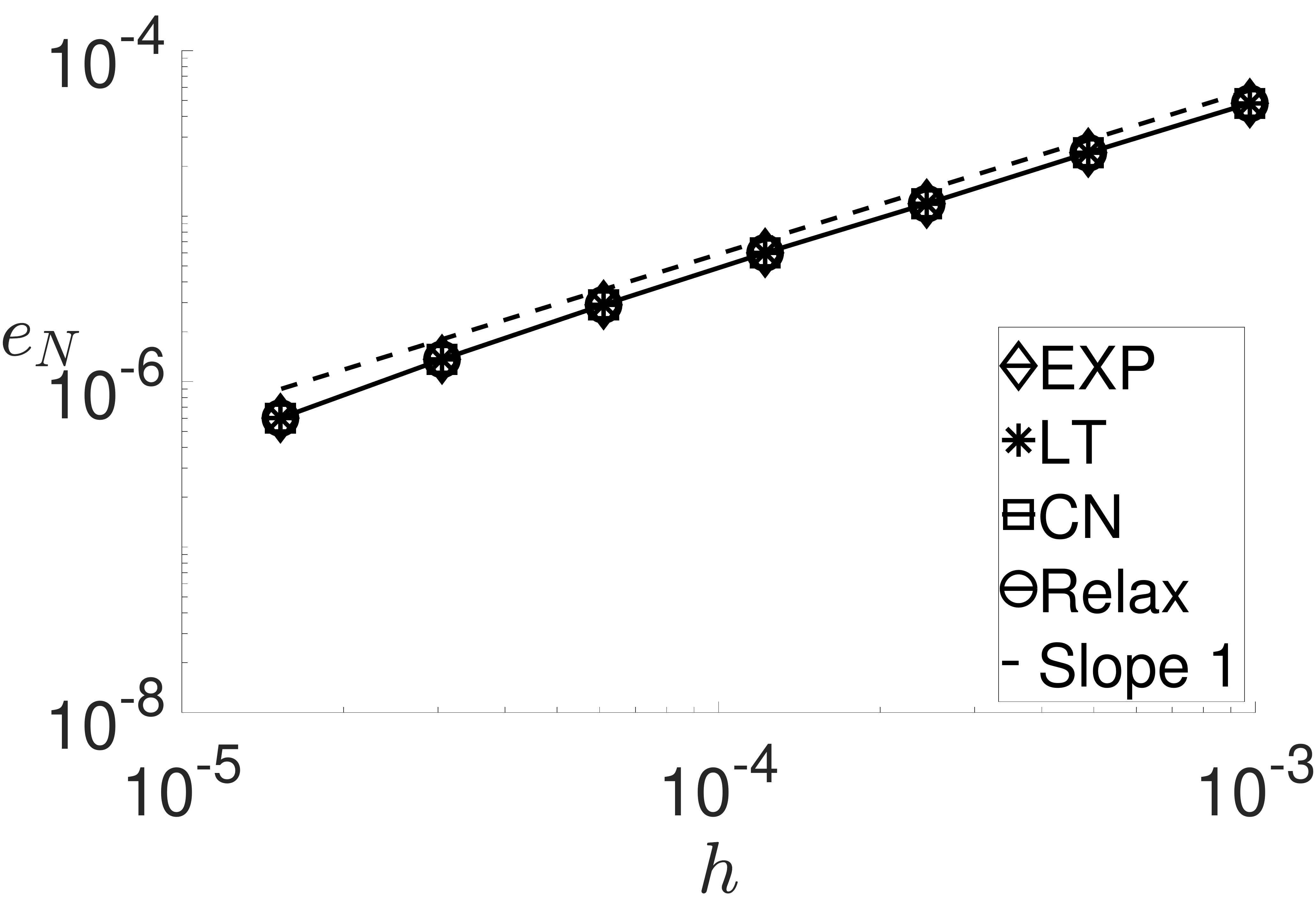}
				\caption{Strong rates of convergence.}
				\label{fig:strongConv}
			\end{center}
		\end{figure}
		
		\subsection{Convergence in probability}
		In this subsection we numerically demonstrate the orders 
		of convergence in probability for the four numerical schemes. 
		This order has been shown to be $1/2$ for 
		the exponential integrator and 
		the Crank--Nicolson scheme in 
		\cite[Proposition 3]{bcd20}, and 
		\cite[Theorem 1.3]{MR3166967}, respectively, and for the Lie--Trotter scheme in Proposition~\ref{propProba} above.
		
		Restating a definition from \cite{bbd15}, 
		we say that a numerical scheme converges in probability with order $\delta$, in $\H^1$, 
		if for all small enough $h>0$,
		$$
		\lim_{C\to\infty}
		\,
		\mathbb{P}\left(\sup_{n\in\{1,2,\ldots,N\}}\norm{X^n - X(t_n)}_{1} \ge C h^\delta\right) = 0,
		$$
		or, equivalently, if for all $\varepsilon \in(0,1]$, there exists
		$C(\varepsilon)>0$ such that
		$$
		\mathbb{P}\left(\sup_{n\in\{1,2,\ldots,N\}}\norm{X^n - X(t_n)}_{1} \ge C(\varepsilon)h^\delta\right) < \varepsilon.
		$$
		Numerically, we investigate the order in probability by using the equation
		\begin{equation}
		\label{eq:probConv}
		\max_{n\in\{1,2,\ldots,N\}} \norm{X^n  - \widetilde{X}(t_n)}_{1} \ge C h^\delta,
		\end{equation}
		where $\widetilde{X}$ denotes a reference solution.
		We then either study the proportion of samples, $P$, fulfilling equation~\eqref{eq:probConv} 
		for given $C$ and $\delta$, or estimate the constant $C$ for given $\delta$ and proportion of samples $P$. This is to say, when estimating $P$ for given $\delta$, $h$, and $C$,
		we then observe whether $P\to 0$ for the given $\delta$ as $h\to 0$ and $C$ increases. 
		Or, when estimating $C$ for given $\delta$, $h$, and the proportion $P$ allowed by the sample size, we then observe whether the range of obtained $C$ is small or not. 
		In order to allow for comparison, we normalize this range via
		$$\tilde{C}(\delta, h, P) = \frac{C(\delta, h, P)}{\max_{\hat h} C(\delta,\hat h, 0)},$$
		which forces $\max_{h}\tilde{C}(\delta, h, 0)=1$.
		
		In the first numerical experiment we simulate $36$ samples (on a $12$ core computer) using the Lie--Trotter splitting scheme using a pseudospectral spatial discretization using $2^{10}$ Fourier modes 
		and we take the parameters 
		$\gamma=9$, 
		$a=50$, 
		$T = 1/2$, 
		$N = 2^k$, $k = 12, 14, \ldots, 20$,
		and $h=T/N$. 
		We approximate the exact solution using a reference solution,
		$\widetilde{X}$, simulated using the Lie--Trotter splitting scheme with 
		$N^\text{ref} = 2^{24}$ and time step size 
		$h^\text{ref} = T/N^\text{ref}$.
		We then estimate the proportion $P$ 
		of samples fulfilling \eqref{eq:probConv} for each given $h$, $\delta= 0.4, 0.5, 0.6$, and 
		$C = 10^c$ for $c=0, .5, 1$. 
		The results are presented in Figure~\ref{fig:LieProbConv}. 
		
		\begin{figure}[h!]
			\begin{center}
				\includegraphics*[width =.5\textwidth,keepaspectratio]{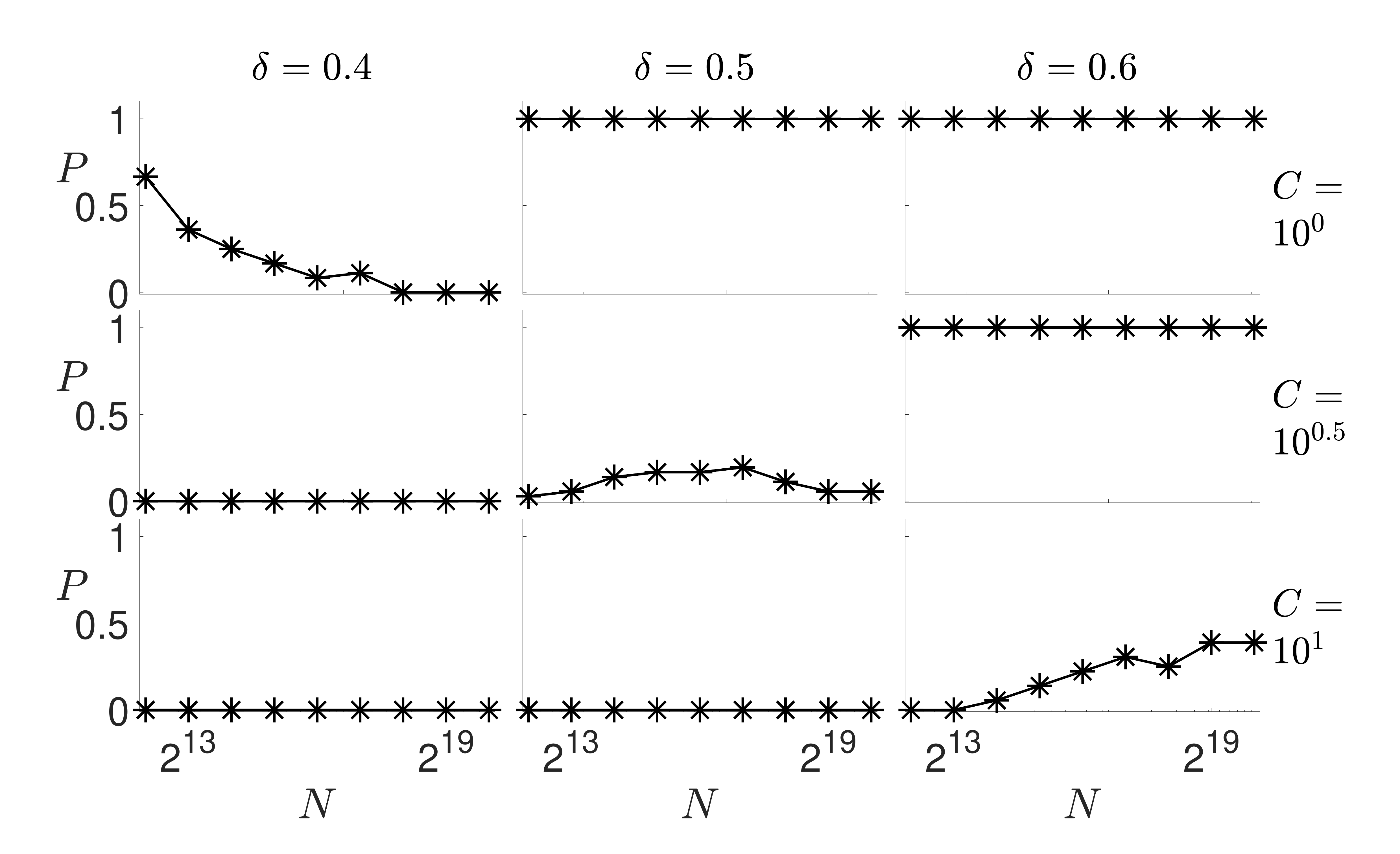}
				\caption{Proportion of samples fulfilling \eqref{eq:probConv} for the Lie--Trotter splitting scheme. 
					\label{fig:LieProbConv}
				}
			\end{center}
		\end{figure}
		
		In this figure, one clearly sees how the proportion of samples $P$ 
		quickly goes to zero for $\delta \le 1/2$ and an increasing $C$. Furthermore, this property 
		does not hold for $\delta > 1/2$. This numerical experiment thus confirms that the order 
		of convergence in probability of the Lie--Trotter scheme is $1/2$. Observe that the reason 
		for choosing a larger value of $\gamma$ in the model \eqref{eq:halfManakov} is for being able to perform 
		such computations in reasonable times with reasonable large values of $C$ and $N$. 
		We have performed similar experiments for the other time integrators and have obtained alike results.
		
		In the second numerical experiment we simulate $300$ samples using all four schemes and 
		considering the parameters  
		$a=50$, 
		$\Delta x = 0.05$, 
		$T = 1$, 
		$N = 2^k$, $k = 10, 11, \ldots, 16$,
		and $h=T/N$. 
		We approximate the exact solution using a reference solution,
		$\widetilde{X}$, simulated using the Lie--Trotter splitting scheme with 
		$N^\text{ref} = 2^{18}$ and time step size 
		$h^\text{ref} = T/N^\text{ref}$. 
		Using the obtained samples, we estimate the normalized ranges $\tilde{C}$ for the given $h$, 
		$\delta = 0.3, 0.4, 0.5$ 
		and $P\in(0,1]$. 
		The results are presented in Figure~\ref{fig:probConvCEst}. 
		
		\begin{figure}[h!]
			\centering
			\begin{subfigure}[t]{0.3\textwidth}
				\centering
				\includegraphics*[width =\textwidth,keepaspectratio,clip]{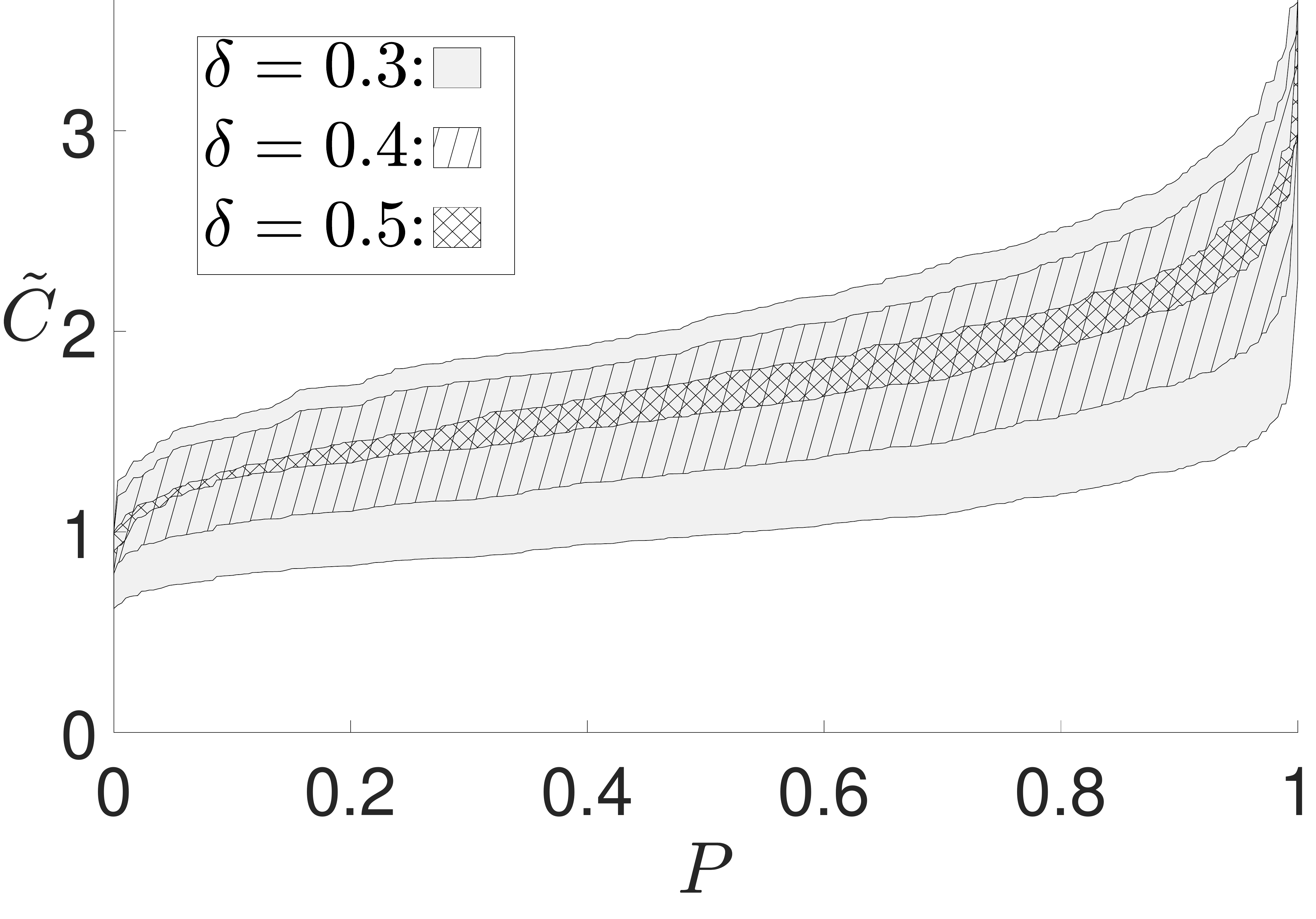}
				\caption{Lie--Trotter splitting scheme.}
				\label{fig:Ceps1}
			\end{subfigure}
			~ 
			\begin{subfigure}[t]{0.3\textwidth}
				\centering
				\includegraphics*[width =\textwidth,keepaspectratio,clip]{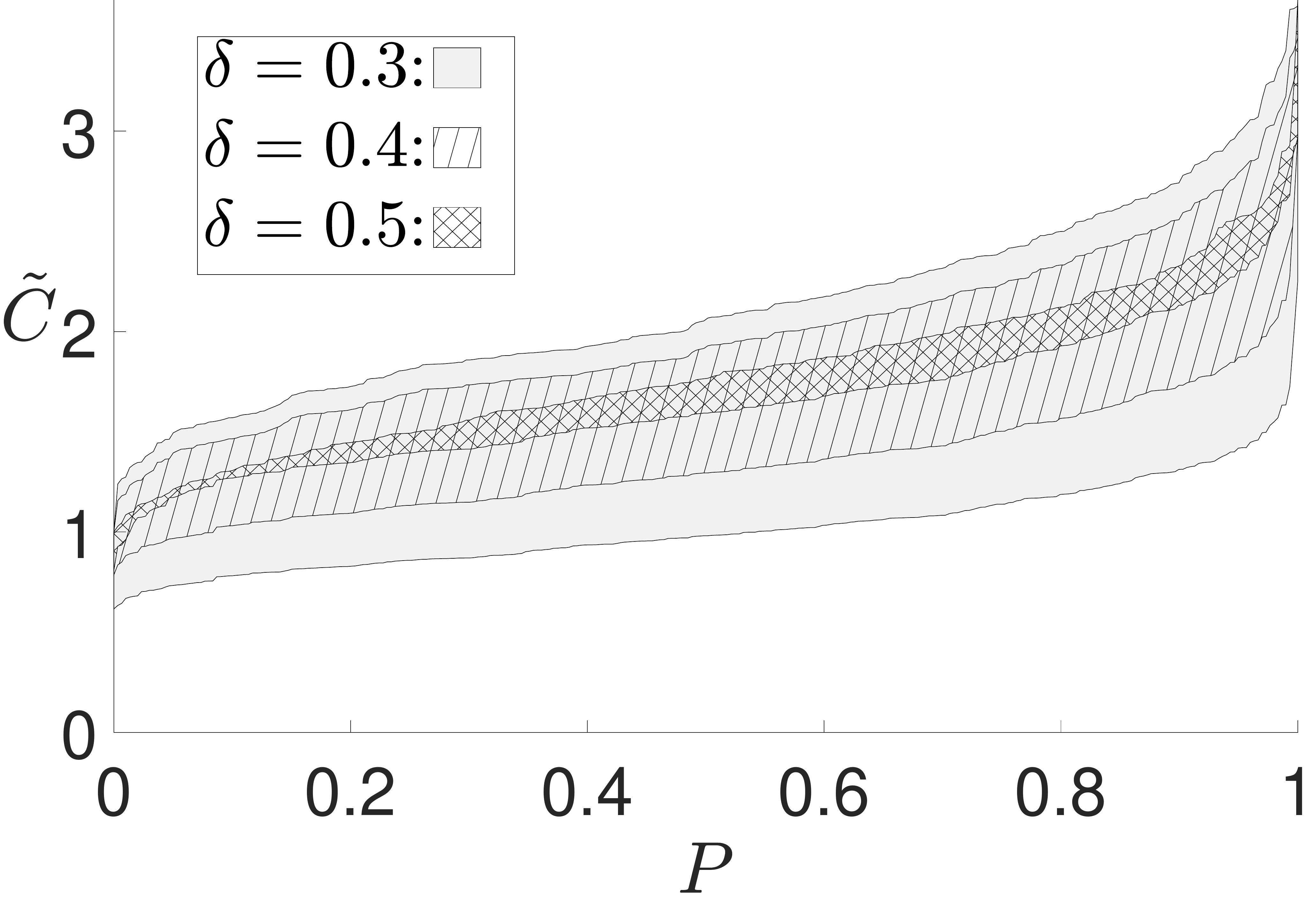}
				\caption{Crank--Nicolson scheme.}
				\label{fig:Ceps2}
			\end{subfigure}
			
			\begin{subfigure}[t]{0.3\textwidth}
				\centering
				\includegraphics*[width =\textwidth,keepaspectratio,clip]{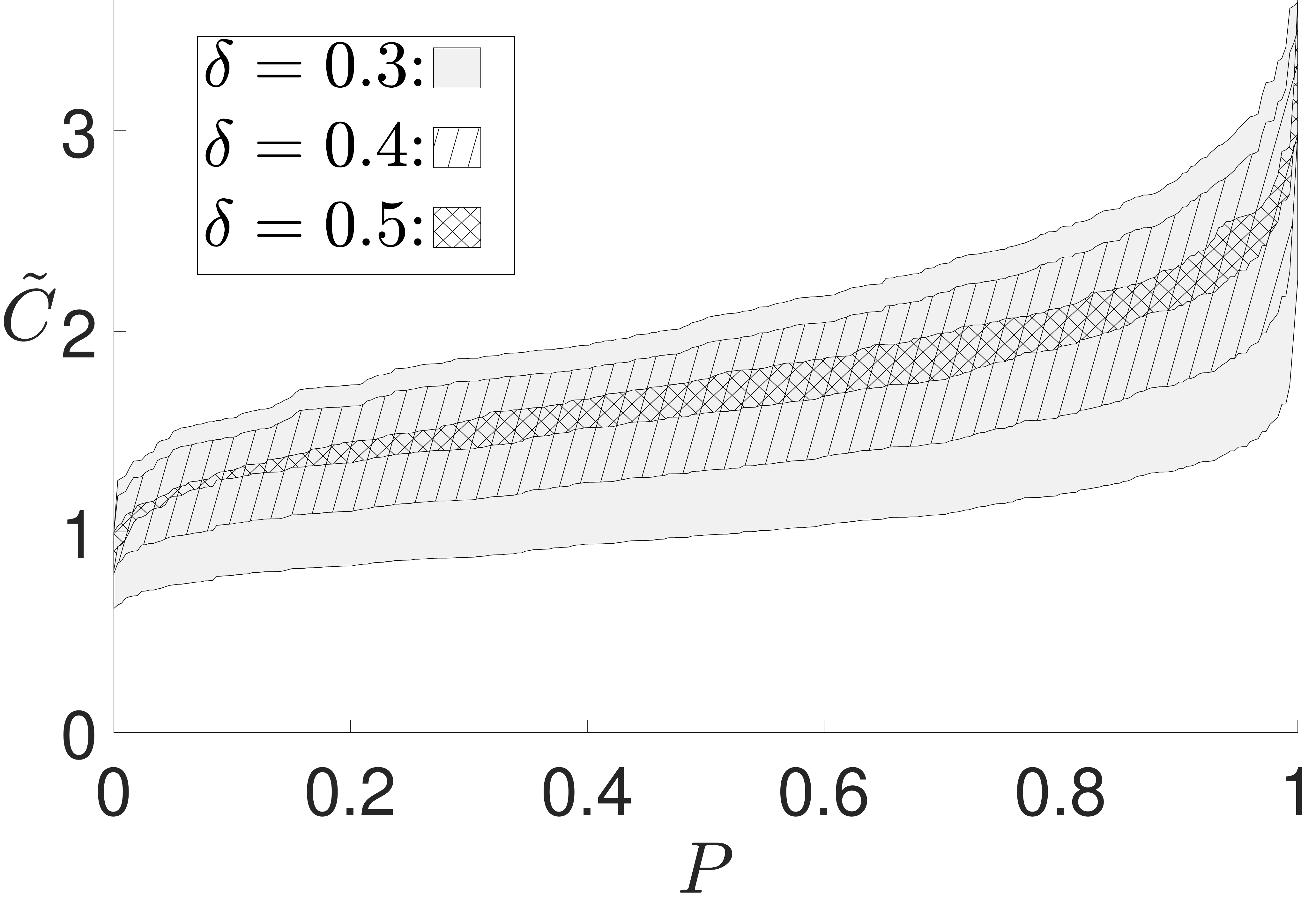}
				\caption{Exponential integrator.}
				\label{fig:Ceps3}
			\end{subfigure}
			~ 
			\begin{subfigure}[t]{0.3\textwidth}
				\centering
				\includegraphics*[width =\textwidth,keepaspectratio,clip]{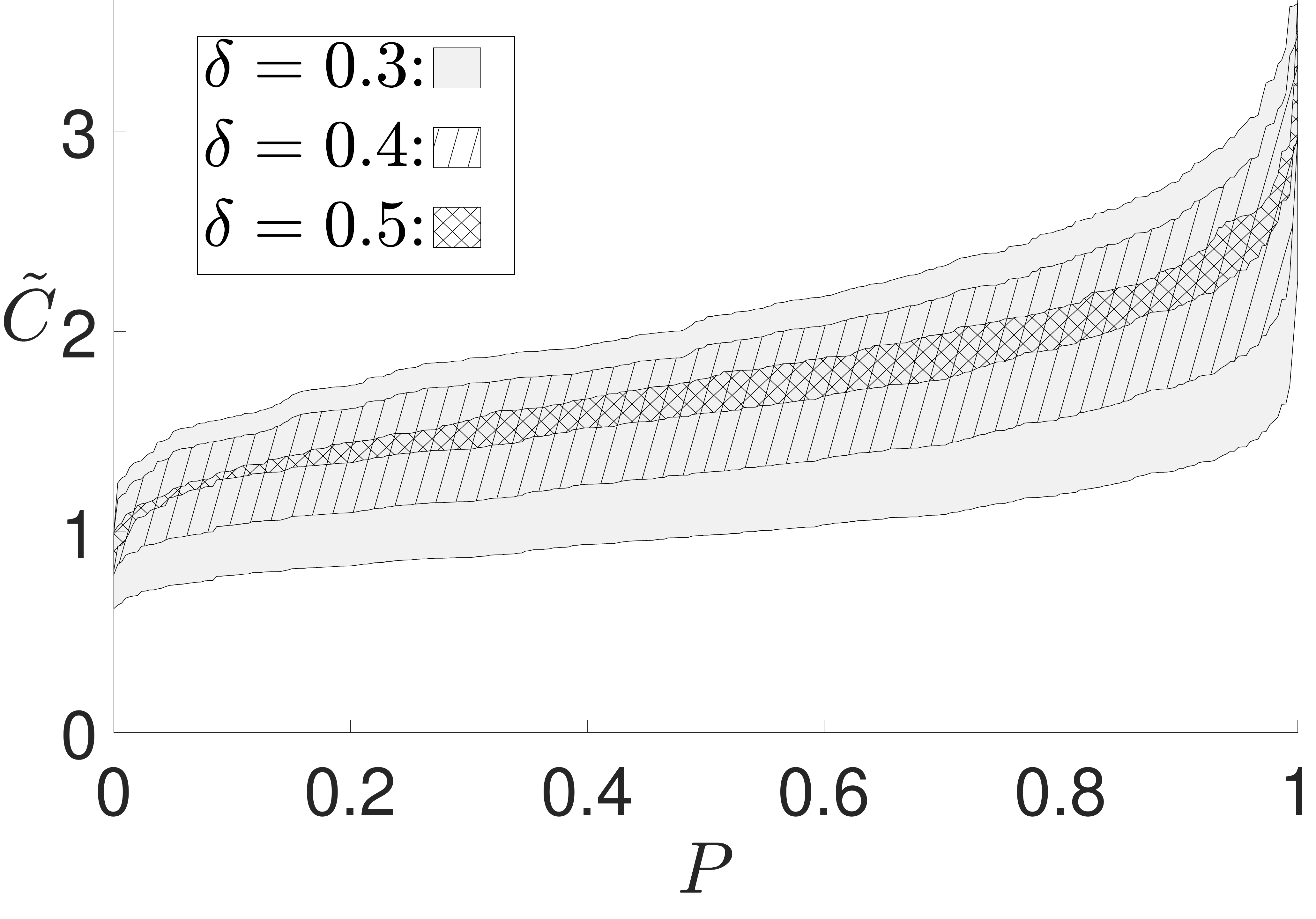}
				\caption{Relaxation scheme.}
				\label{fig:Ceps4}
			\end{subfigure}
			\caption{
				The ranges $\tilde{C}$ of each numerical schemes obtained with $\delta= 0.3,0.4,0.5$. 
				\label{fig:probConvCEst}
			}
		\end{figure}
		
		In this figure, one clearly sees how the ranges $\tilde{C}$ 
		becomes smaller when $\delta\to 1/2$. 
		This figure illustrates that all four numerical schemes 
		converge in probability with at least order $1/2$.
		
		\subsection{Almost-sure convergence}
		In this subsection, with the help of two numerical experiments, 
		we numerically demonstrate the orders 
		of almost-sure convergence of the four numerical schemes.
		This order has been shown to be $1/2^-$ for the exponential integrator and 
		the Crank--Nicolson scheme in \cite[Proposition 4]{bcd20}, and 
		\cite[Theorem 1.3]{MR3166967}, respectively as well as for 
		the Lie--Trotter scheme in Proposition~\ref{prop-as}.
		
		Restating the definition, we say that a numerical scheme converges 
		almost-surely with order $\hat{\delta}$ if for all $\delta\in(0,\hat{\delta})$, 
		there exists a random variable $K_\delta(T)$ such that one has 
		$$
		\max_{n=0,\ldots, N_\tau}\norm{X^n(\omega)-X(t_n,\omega)}_{\H^1}\leq K_\delta(T,\omega)h^\delta\quad\quad\quad\mathbb{P}-a.s.,
		$$
		for $h>0$ small enough, where $\tau$ is a stopping time. 
		For ease of presentation we take $T=1$. Numerically, 
		we can investigate the order of almost-sure convergence 
		by first computing the sample errors, 
		with respect to a reference solution $\widetilde X$, 
		$$
		e_N(\omega) := \max_{n=1,\ldots,N}\norm{X^n(\omega)  - \widetilde{X}(\omega,t_n)}_{1}
		$$ 
		at the coarse time grid 
		$\left\{t_n\right\}_{n=1}^N = \left\{nh \right\}_{n=1}^N$, where $h = 1/N$. 
		For a fixed sample, we can then estimate the constant $K_\delta(T,\omega)$, according to the formula
		$$
		\min\:\{K_\delta(T,\omega)\in \mathbb{R}^+\colon K_\delta(T,\omega) h^\delta \ge e_N(\omega)\}
		$$
		for all considered time step sizes $h$. For a sufficiently large span of $h$, we would then expect that the distance
		\begin{equation}
		\label{eq:ASError}
		e_\delta = \max_{h} |K_\delta(T,\omega) h^\delta - e_N(\omega) |
		\end{equation}
		would be minimized for the correct order $\delta$.
		
		Let us now use the above and investigate $e_\delta$ from \eqref{eq:ASError} for all four numerical schemes. 
		We simulate $300$ samples and consider the following parameters
		$a = 50$, $\Delta x = 0.2$, $T = 1$, 
		$N = 2^k$, $k = 8, 11, \ldots, 16$,
		and 
		$h = T/N$. 
		We approximate the exact solution using a reference solution,
		$\widetilde{X}$, simulated using the Lie--Trotter splitting scheme with 
		$N^\text{ref}=2^{18}$ and time step size $h^\text{ref} = T/N^\text{ref}$.  
		We then estimate the constant $K_\delta(T,\omega)$ for each sample, and compute the mean, median and standard deviation of $e_\delta$ in equation \eqref{eq:ASError}.
		The results are presented in Table~\ref{tab:ASError}. 
		We see that $\delta=1/2$ minimizes the mean, median and the deviation of $e_\delta$ 
		for all four numerical schemes. 
		This confirms the theoretical results that the order of almost-sure convergence is $1/2-$ for the Lie--Trotter scheme, Crank--Nicolson scheme, and exponential integrator.
		
		By treating the $e_\delta$ as random variables dependent on the choice of $\delta$ and numerical scheme, we can statistically compare their expected values via one-sided paired t-tests. 
		We do this by pairing $e_{0.5}$ and $e_\delta$, $\delta = 0.4, 0.45, 0.55, 0.6$ for each numerical scheme, and test the hypothesis pairs
		$$
		\begin{cases}
		H_0: \E[e_{0.5}] = \E[e_{\delta}]\\
		H_1: \E[e_{0.5}] < \E[e_{\delta}].
		\end{cases}
		$$
		The p-values obtained by the t-tests is the probability, under the assumption of the null hypothesis $H_0$, to observe a set of observations at least as extreme as those tested. 
		This means that if one obtains a p-value smaller than a chosen significance level, typically $5\%$ or $1\%$, one may reject the null hypothesis $H_0$ in favor of the alternative hypothesis, $H_1$.
		For $12$ of the $16$ combinations of $e_\delta$ and numerical schemes, we obtain p-values which are at most approximately $10^{-11}$. 
		When comparing $e_{0.5}$ with $e_{0.55}$ for the four numerical schemes, we obtain p-values of approximately $0.35$ to $0.37$.
		We may therefore safely reject the null hypotheses in favor of the alternative hypotheses, with the exception of $\E[e_{0.5}] < \E[e_{0.55}]$. 
		This leads to the conclusion that $\delta$ close to $1/2$ minimizes the mean of the chosen $e_\delta$, for all four numerical schemes.
		Combining this with the results in Table~\ref{tab:ASError} thus illustrates that all time integrators converge 
		almost-surely with at least order $\delta=1/2$. 
		
		
		\begin{table}\centering
			\captionsetup{justification=centering}
			\begin{tabular}{|l|l|l|l|l|l|}
				\hline
				Mean & $\delta = 0.4$ & $\delta = 0.45$ & $\delta = 0.5$ & $\delta = 0.55$ & $\delta = 0.6$ \\ \hline
				SEXP & 0.67228 & 0.54360 & 0.47071 & 0.47269 & 0.54750 \\ \hline
				LT & 0.67215 & 0.54351 & 0.47070 & 0.47275 & 0.54763 \\ \hline
				CN & 0.67159 & 0.54316 & 0.47082 & 0.47326 & 0.54838 \\ \hline
				Relax & 0.67179 & 0.54331 & 0.47081 & 0.47311 & 0.54816 \\ \hline
			\end{tabular}
			
			\vspace{.2cm}
			\begin{tabular}{|l|l|l|l|l|l|}
				\hline
				Median & $\delta = 0.4$ & $\delta = 0.45$ & $\delta = 0.5$ & $\delta = 0.55$ & $\delta = 0.6$ \\ \hline
				SEXP & 0.66295 & 0.53609 & 0.45448 & 0.45633 & 0.52218 \\ \hline
				LT & 0.66284 & 0.53583 & 0.45447 & 0.45621 & 0.52256 \\ \hline
				CN & 0.66324 & 0.53674 & 0.45382 & 0.45754 & 0.52234 \\ \hline
				Relax & 0.66211 & 0.53662 & 0.45384 & 0.45759 & 0.52177 \\ \hline
			\end{tabular}
			
			\vspace{.2cm}
			\begin{tabular}{|l|l|l|l|l|l|}
				\hline
				STD & $\delta = 0.4$ & $\delta = 0.45$ & $\delta = 0.5$ & $\delta = 0.55$ & $\delta = 0.6$ \\ \hline
				SEXP & 0.20884 & 0.18380 & 0.16160 & 0.16295 & 0.18948 \\ \hline
				LT & 0.20884 & 0.18378 & 0.16157 & 0.16293 & 0.18947 \\ \hline
				CN & 0.20881 & 0.18377 & 0.16135 & 0.16289 & 0.18958 \\ \hline
				Relax & 0.20882 & 0.18380 & 0.16153 & 0.16302 & 0.18969 \\ \hline
			\end{tabular}
			\caption{
				Mean, median and standard deviation of \eqref{eq:ASError} obtained for each scheme, and $\delta = 0.4,0.45, 0.5,0.55, 0.6$.
				\label{tab:ASError}}
		\end{table}
		
		In the second numerical experiment, we illustrate the behavior of how each individual sample of the Lie--Trotter splitting scheme converge{\color{blue}s} in $\H^1$ as $h\to 0$. 
		To do this, we take the parameters $a = 50$, $\Delta x = 1/16$, $T = 1$, 
		$N = 2^k$, $k = 4, 6, \ldots, 18$, and $h = T/N$. 
		The real part of the obtained numerical solutions are presented in Figure~\ref{fig:LieAS}. 
		Solutions computed with larger time steps $h$ are displayed with lighter gray, the red color is used for the reference solution $\widetilde{X}$. In this figure, one can clearly observe that not only the heights of the oscillations will converge properly, 
		but also that the offsets of the peaks, caused by coarse time stepping, will be lessen as the number of time steps increases. 
		
		\begin{figure}
			\begin{center}
				\includegraphics*[width =.5\textwidth,keepaspectratio]{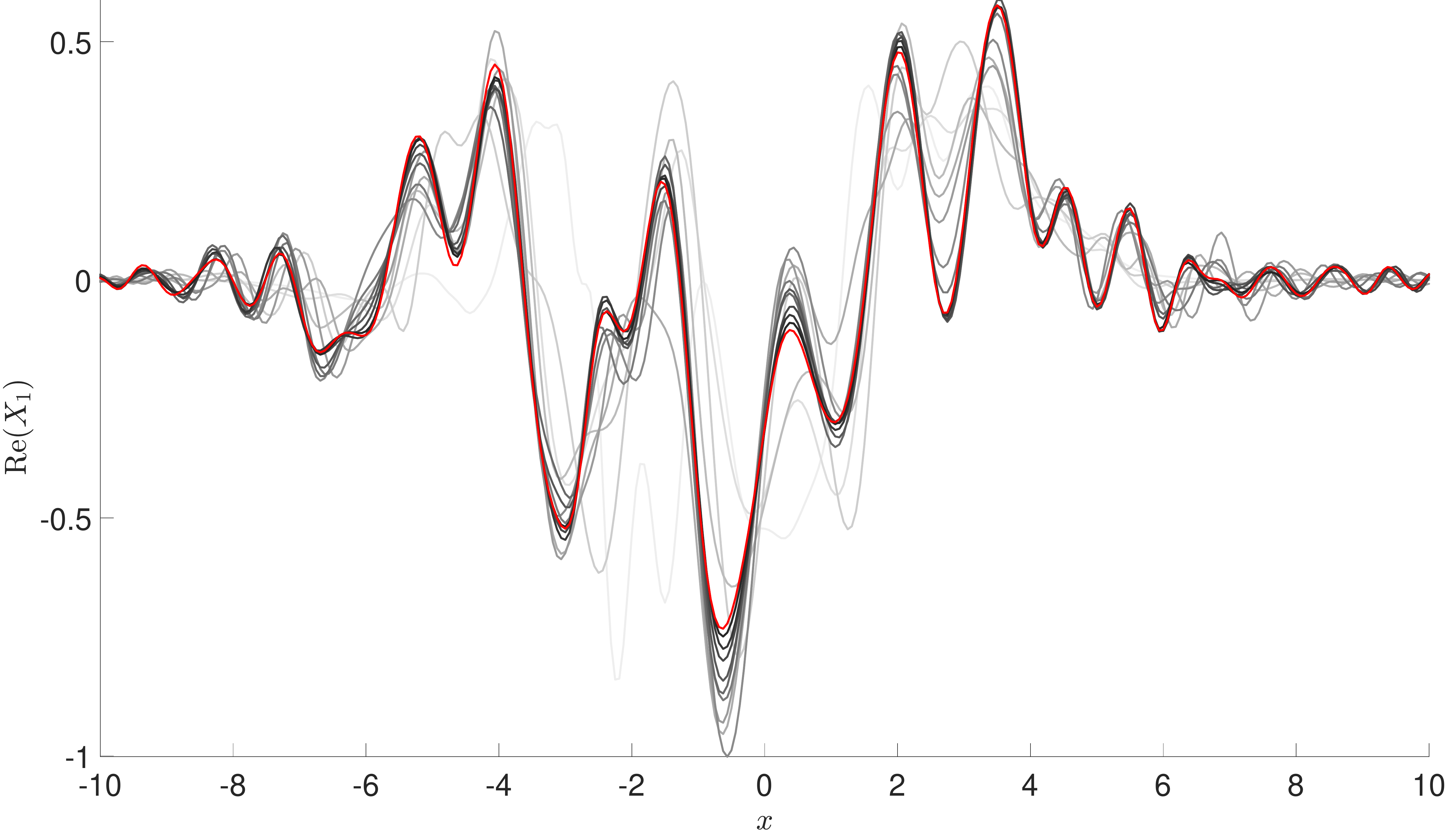}
				\caption{Almost sure convergence with the Lie--Trotter splitting scheme. 
					\label{fig:LieAS}
				}
			\end{center}
		\end{figure}
		
		\subsection{Preservation of the $\L^2$-norm}
		In this subsection we numerically illustrate the preservation of the $\L^2$-norm for the above time integrators. 
		The numerical schemes which have been shown to preserve the $\L^2$-norm are the
		Lie--Trotter splitting scheme (in Lemma~\ref{thm:LTPres} above) 
		and the Crank--Nicolson scheme (in \cite[Proposition 3.1]{MR3166967}).
		In contrast, the exponential integrator does not preserve the $\L^2$-norm, see \cite[Ch. 4]{bcd20}.
		
		For this numerical experiments, we consider the parameters $a = 50$, $\Delta x = 0.2$, $T = 2$, $N=2^{14}$, and $h = T/N$.
		We simulate $100$ samples and for each sample and each scheme compute the maximum drift in the $\L^2$-norm,
		$$
		\max_{n=1,\ldots,N} \log_{10} \left|\, \norm{X^n}_{\L^2} - \norm{X^0}_{\L^2} \, \right|.
		$$
		The maximum drifts of these samples are presented in Figure~\ref{fig:L2Cons}. 
		\begin{figure}[h!]
			\begin{center}
				\includegraphics*[width =.4\textwidth,keepaspectratio,clip]{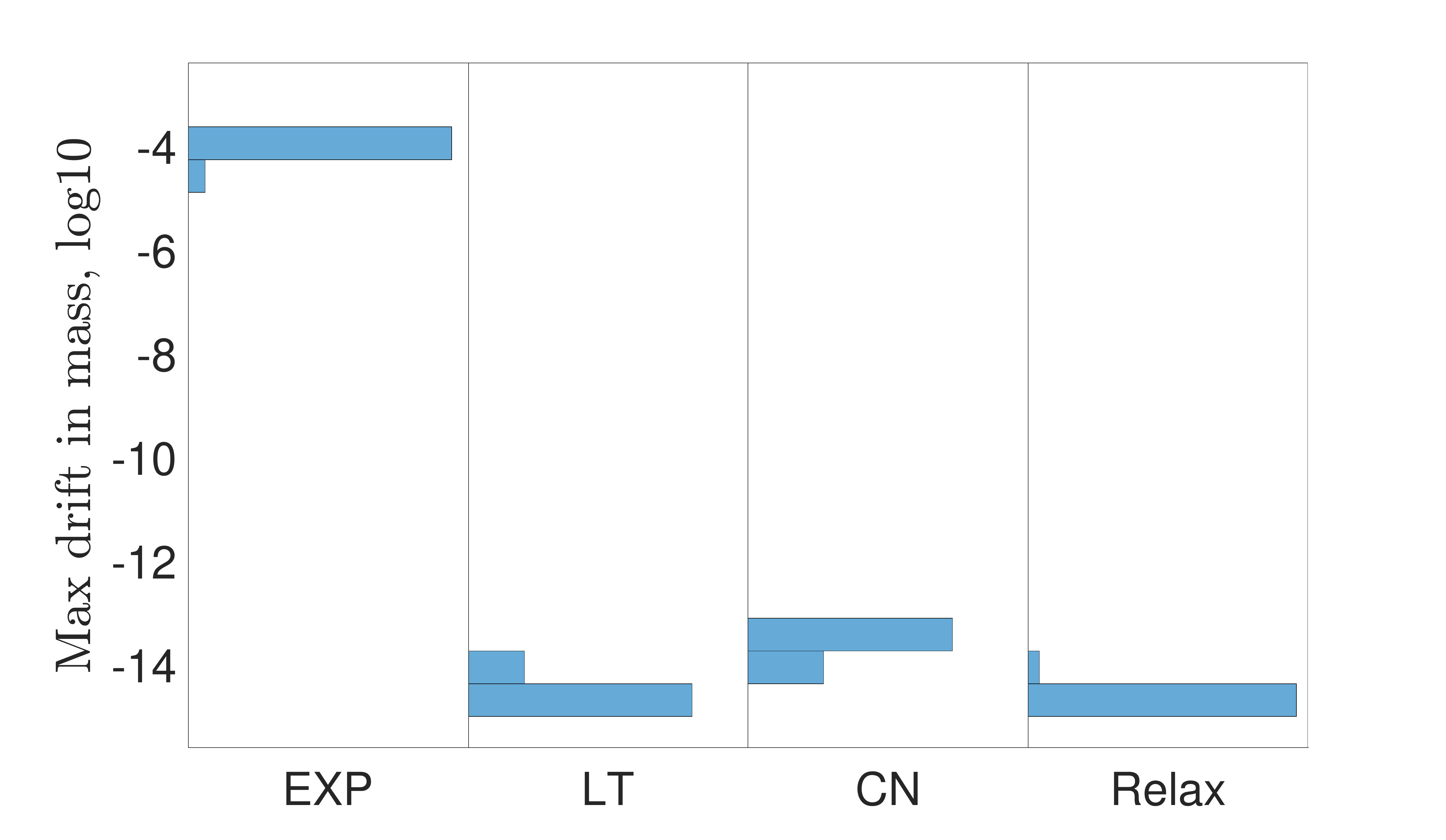}
				\caption{Log of the maximum drift in the $\L^2$-norm for each time integrators.}
				\label{fig:L2Cons}
			\end{center}
		\end{figure}
		In this figure, it can be observed that one has a preservation up to the $14$'th decimal 
		for all numerical schemes but the exponential integrator.
		
		
		\subsection{Computational costs}
		The goal of the present numerical experiment is to compare the computational costs of the above time integrators. 
		To do this, we consider the following parameters 
		$a = 50$, 
		$\gamma = 2$, 
		$\Delta x = 0.25$, 
		$T = 1$, 
		$N = 2^k$, $k = 8, 9, \ldots, 14$, and 
		$h = T/N$. 
		We approximate the exact solution using a reference solution, denoted by 
		$\widetilde{X}$ and simulated using the Lie--Trotter splitting scheme with $N^{\text{ref}}=2^{16}$ and time step size 
		$h^\text{ref}=T/N^{\text{ref}}$, and  
		compute the mean-square errors
		$$
		e_N := \mathbb{E}\left[\norm{X^N - \widetilde{X}(T)}_{1}^2\right].
		$$
		The expectations present in $e_N$ are approximated using $100$ samples. 
		The mean computational times for all time integrators are presented in Figure~\ref{fig:compTime}. 
		
		\begin{figure}[h!]
			\begin{center}
				\includegraphics*[width =.4\textwidth,keepaspectratio]{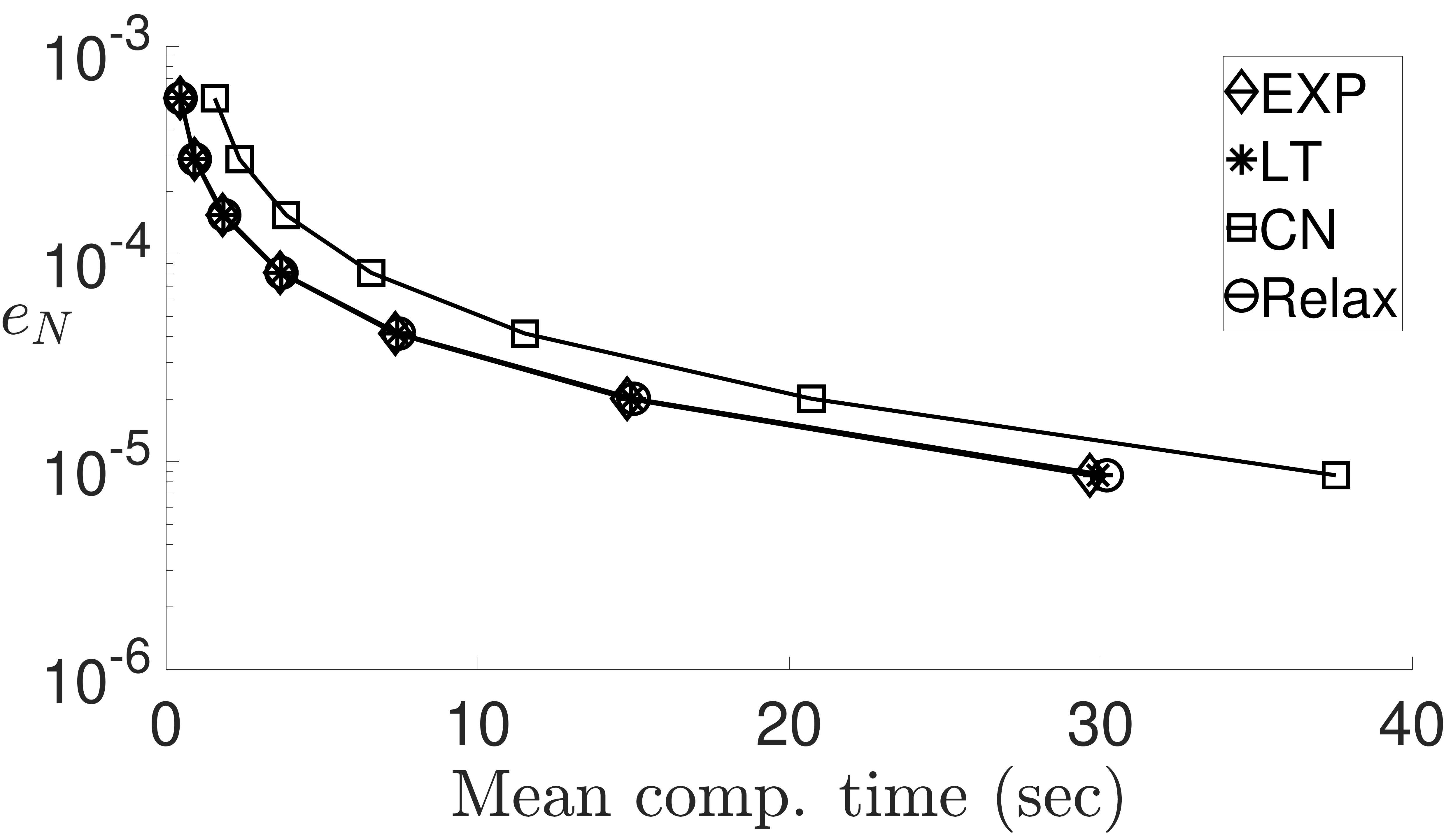}
				\caption{Mean-square errors at time $T=1$ compared to mean computational times.
					\label{fig:compTime}
				}
			\end{center}
		\end{figure}
		
		In this figure, one can see that the Crank--Nicolson scheme is significantly slower than the other three schemes, which is to be expected due to the implicit calculations. 
		In addition, it seems that the exponential integrator has a slight advantage when it comes to computational time. 
		
		\subsection{Comparison of the evolution of solitons}
		\label{sec:solitonEvol}
		In this subsection, we investigate the impact of noise on soliton solutions to the deterministic Manakov system. 
		We do this by simulating equation \eqref{eq:halfManakov} with the Lie--Trotter splitting scheme \eqref{ltsplit} 
		for different levels of noise. 
		We observe the evolutions of the $\H^1$-norm, Hamiltonian, the mass center, and the pulse width.
		We also inspect the evolution of the profiles of the solitons. The mass center (or barycenter, or time displacement) and the pulse width (or viriel), see e.g. \cite{garn2002,GazeauPhd}, are respectively defined as
		$$T_c(t) = \langle x \rangle(t) = \frac{\displaystyle\int_\R x |X(t,x)|^2 \text{d}\,x}{\displaystyle\int_\R |X(t,x)|^2 \text{d}\,x}$$
		and 
		$$V(t) = \langle x^2 \rangle(t) = \frac{\displaystyle\int_\R \left(x-\langle x \rangle(t)\right)^2 |X(t,x)|^2 \text{d}\,x}{\displaystyle\int_\R |X(t,x)|^2 \text{d}\,x}.$$
		
		In the deterministic case, the soliton \eqref{eq:soliton} preserves a number of properties, including the $\H^1$-norm, the Hamiltonian, and the pulse width. Some of the preserved properties can be seen in the lemma below. 
		\begin{lemma}
			\label{lemma:cons}
			The soliton \eqref{eq:soliton}, obtained by equation \eqref{eq:halfManakov} with $\gamma = 0$ and initial value \eqref{eq:hasegawa}, preserves the following quantities:
			\begin{enumerate}
				\item $H(X(\cdot,t))$, where $H$ is the Hamiltonian
				\begin{equation*}
				H(X) = \frac{1}{2} \int_\R \left|\frac{\partial}{\partial x}X\right|^2 \text{d}\, x 
				- \frac{1}{4} \int_\R \left|X\right|^4 \text{d}\, x .
				\end{equation*}
				\item The norm $\norm{X(\cdot,t)}_{\H^1}^2$.
				\item $\norm{\left(\frac{\partial}{\partial x}X(x,t)\right)(\cdot)}_{\L^2}^2$.
				\item $\norm{\left|X(\cdot,t)\right|^{\sigma}}_{\L^2}^2$ for $\sigma\in\Z^+$.
				\item The pulse width $V(t)$.
			\end{enumerate}
		\end{lemma}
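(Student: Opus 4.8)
The plan is to exploit that the modulus of the soliton is a rigid profile translating at constant speed, so that every quantity in the list reduces to an integral over $\R$ of a $t$‑independent function, up to a translation of the integration variable. Write $\xi=\xi(t,x)=x-\tau(t)$. Since the constant $\C^2$ prefactor in \eqref{eq:soliton} has unit modulus, $|\cos(\theta/2)e^{i\phi_1}|^2+|\sin(\theta/2)e^{i\phi_2}|^2=1$, we obtain
$$|X(t,x)|^2=\eta^2\sech^2(\eta\xi),$$
which depends on $t$ and $x$ only through $\xi$. By translation invariance of the Lebesgue measure, $\int_\R|X(t,x)|^{2\sigma}\,\dd x=\int_\R\eta^{2\sigma}\sech^{2\sigma}(\eta\xi)\,\dd\xi$ for every $\sigma\in\Z^+$ (the integral converging since $\sech$ decays exponentially), and the right-hand side is independent of $t$. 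This is exactly item~(4), and in particular it yields the conservation of $\norm{X(\cdot,t)}_{\L^2}^2$ and of $\int_\R|X(t,x)|^4\,\dd x$.

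Next I would compute the spatial derivative. Because the $\C^2$ prefactor does not depend on $x$, differentiating \eqref{eq:soliton} gives
$$\partial_x X(t,x)=\begin{pmatrix}\cos(\theta/2)e^{i\phi_1}\\ \sin(\theta/2)e^{i\phi_2}\end{pmatrix}\eta\,e^{-i\kappa\xi+i\alpha(t)}\bigl(-\eta\sech(\eta\xi)\tanh(\eta\xi)-i\kappa\sech(\eta\xi)\bigr),$$
so that $|\partial_x X(t,x)|^2=\eta^2\sech^2(\eta\xi)\bigl(\eta^2\tanh^2(\eta\xi)+\kappa^2\bigr)$ again depends only on $\xi$. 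Hence $\int_\R|\partial_x X(t,x)|^2\,\dd x$ is $t$‑independent, which is item~(3). Items~(1) and~(2) then follow at once: $\norm{X(\cdot,t)}_{\H^1}^2=\norm{X(\cdot,t)}_{\L^2}^2+\norm{\partial_x X(\cdot,t)}_{\L^2}^2$ and $H(X(\cdot,t))=\tfrac12\norm{\partial_x X(\cdot,t)}_{\L^2}^2-\tfrac14\int_\R|X(t,x)|^4\,\dd x$ are sums of quantities already shown to be conserved.

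For the pulse width, item~(5), I would first identify the mass center. Substituting $x=\xi+\tau(t)$ and using that $\xi\mapsto\xi\sech^2(\eta\xi)$ is odd while $\xi\mapsto\sech^2(\eta\xi)$ is even,
$$\langle x\rangle(t)=\frac{\int_\R(\xi+\tau(t))\,\eta^2\sech^2(\eta\xi)\,\dd\xi}{\int_\R\eta^2\sech^2(\eta\xi)\,\dd\xi}=\tau(t).$$
Therefore $x-\langle x\rangle(t)=\xi$, and
$$V(t)=\frac{\int_\R\xi^2\,\eta^2\sech^2(\eta\xi)\,\dd\xi}{\int_\R\eta^2\sech^2(\eta\xi)\,\dd\xi},$$
which no longer involves $t$; this gives item~(5).

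There is no genuine obstacle here: the whole argument rests on the single structural observation that $|X(t,\cdot)|$ (and, after the short computation above, $|\partial_x X(t,\cdot)|$) is a translate of a fixed profile, so translation invariance of $\int_\R$ does all the work. The only point requiring a little care is item~(5), where one must first establish $\langle x\rangle(t)=\tau(t)$ via the odd/even splitting before the centered second moment becomes manifestly $t$‑independent; one should also note in passing that all the integrals involved converge because $\sech$ and its products with polynomials decay exponentially at $\pm\infty$. (Alternatively, items~(1)--(3) could be derived from the classical conservation laws of the deterministic Manakov system---$\L^2$‑norm, momentum, energy---but the direct computation above is self-contained and additionally covers the soliton-specific family in item~(4).)
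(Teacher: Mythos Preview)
Your proof is correct and follows essentially the same approach as the paper: insert the explicit soliton into each quantity and use translation invariance of the Lebesgue measure, computing the mass center first for item~(5). The paper additionally evaluates the pulse width explicitly as $V(t)=\pi^2/(12\eta^2)$, but your argument that it is $t$-independent already suffices for the statement of the lemma.
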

		\begin{proof}
            To show that the first four quantities are preserved along the soliton \eqref{eq:soliton}, one 
            inserts the definition of the soliton into these quantities and uses the 
            translation invariance of the Lebesgue measure.  
            
			
			Along the soliton \eqref{eq:soliton}, the evolution of the mass center $T_c$ is given by  
			\begin{align*}
			T_c(t) = \frac{\displaystyle\int_\R x |X(t,x)|^2 \text{d}\,x}{\displaystyle\int_\R |X(t,x)|^2 \text{d}\,x}
			= \frac{\displaystyle\int_\R (\eta z + \tau_0 - \kappa t)|\sech(z)|^2 \text{d}\,z}{2} = \tau_0 - \kappa t.
			\end{align*}
			It then follows that the pulse width satisfies
			\begin{align*}
			V (t)
			= \frac{\displaystyle\int_\R \left(x- T_c(t)\right)^2 |X(t,x)|^2 \text{d}\,x}{\displaystyle\int_\R |X(t,x)|^2 \text{d}\,x}
			= \frac{\pi^2}{12\eta^2}.
			\end{align*}
		\end{proof}
		With the following experiments we highlight how some of these quantities evolve
                in the stochastic case considered in this paper ($\gamma>0$),
                and present a visual comparison, using the Lie--Trotter splitting scheme
                and a pseudospectral spatial discretization.
		We consider three instances of equation \eqref{eq:halfManakov} 
		with periodic boundary conditions 
		and the noise coefficients 
		$\gamma = 0$,
		$\gamma = 1$, and 
		$\gamma = 1/20$.
		We use three initial values for each choice of $\gamma$ in the form of equation \eqref{eq:hasegawa}, with the coefficients seen in Table~\ref{table:hasegawaCoeff}.
		\begin{table}[]
			\centering
			\begin{tabular}{|l|l|l|l|}
				\hline
				Set & $1$    & $2$   & $3$     \\ \hline \hline
				$\eta$   & $1$    & $1.2$   & $1.5$   \\ \hline
				$\theta$ & $\pi/3$ & $\pi/2$  & $-\pi/2$ \\ \hline
				$\phi_1$ & $\pi/4$ & $-\pi/4$ & $4\pi/5$ \\ \hline
				$\phi_2$ & $\pi/2$ & $\pi/4$  & $-\pi/2$ \\ \hline
				$\kappa$ & $2$    & $3$     & $4$     \\ \hline
			\end{tabular}
			\caption{\label{table:hasegawaCoeff}
				Initial value coefficients.
			}
		\end{table}
		Further, we use
		$a = 20\pi$,
		$2^{14}$ Fourier modes,
		$T = 10$, $N=2^{10}$
		and $h = T/N$.
		In the stochastic cases ($\gamma = 1$ and $\gamma=1/20$) we simulate one sample each using one common Brownian motion.
		The $\H^1$-norm, Hamiltonian, the mass center, and the pulse width can be seen in Figure~\ref{fig:H1HamilSoliton}. 
		In addition to this, the evolution of the first component of some of the numerical solutions can 
		be seen in Figure~\ref{fig:SolitonWaterfall}.
		
		\begin{figure}[h!]
			\centering
			\begin{subfigure}[t]{0.49\textwidth}
				\centering
				\includegraphics*[width =\textwidth,keepaspectratio,clip]{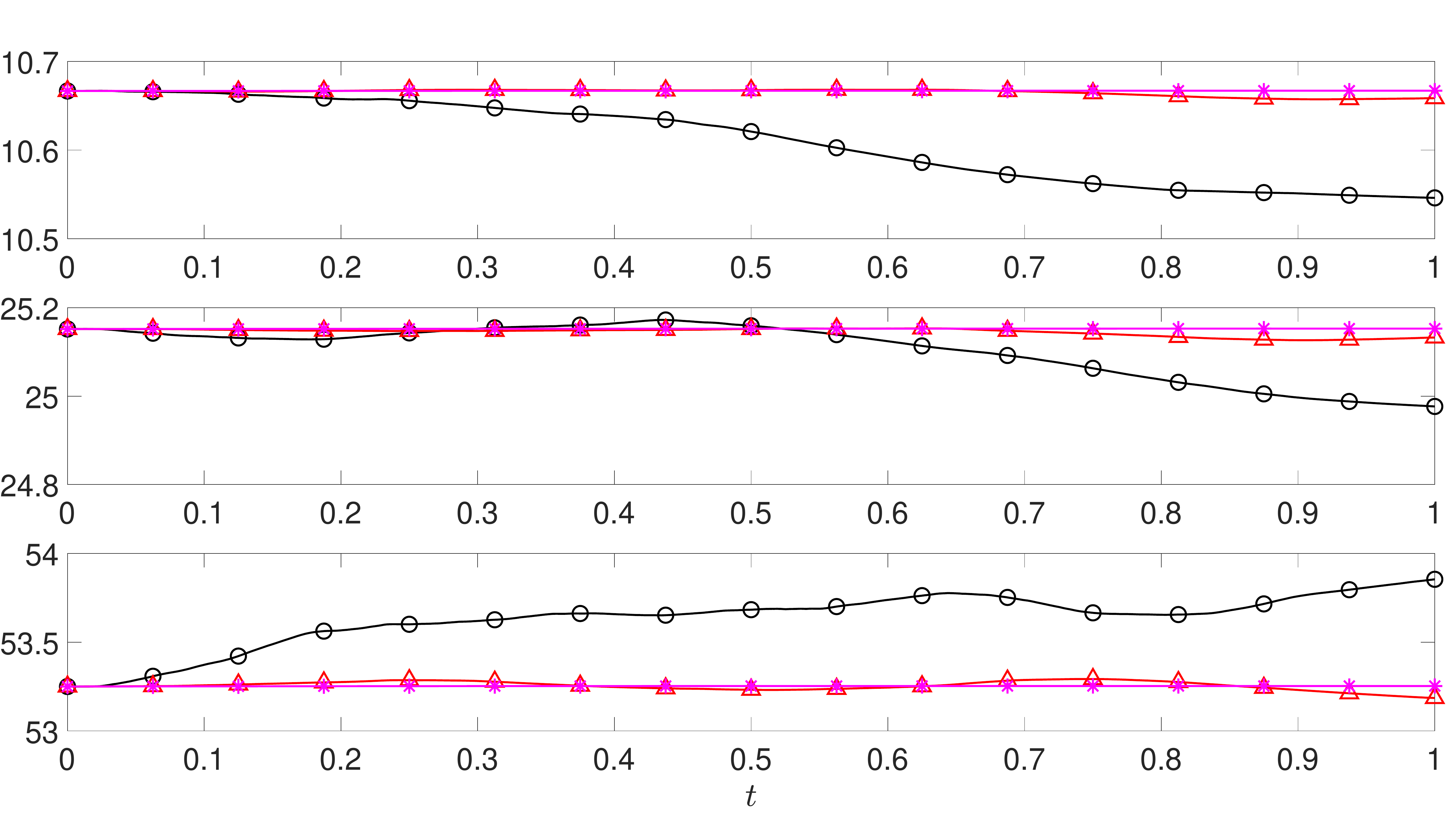}
				\caption{Evolution of $\H^1$-norm.}
			\end{subfigure}
			~ 
			\begin{subfigure}[t]{0.49\textwidth}
				\centering
				\includegraphics*[width =\textwidth,keepaspectratio,clip]{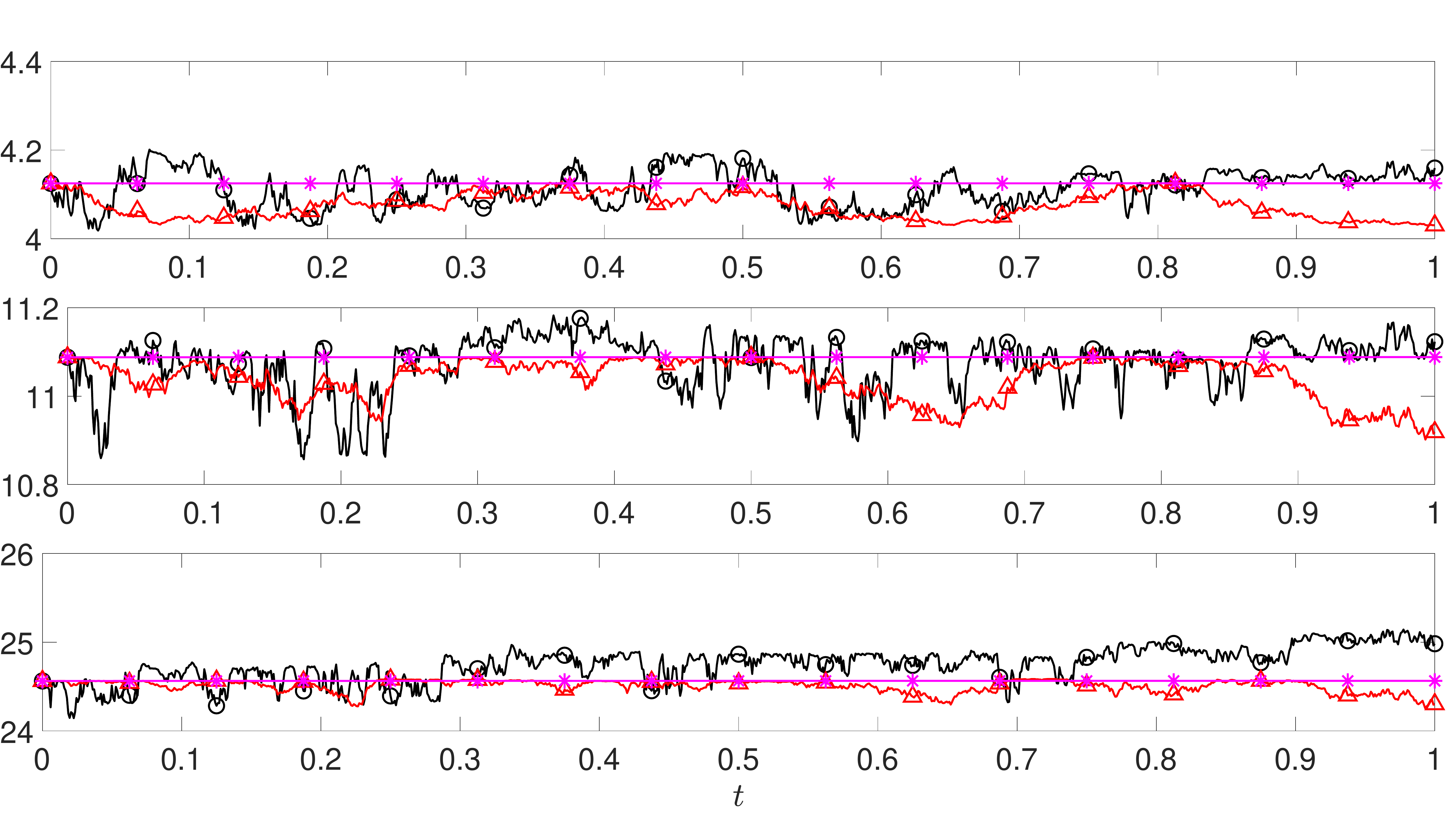}
				\caption{Evolution of Hamiltonian.}
			\end{subfigure}
			
			\begin{subfigure}[t]{0.49\textwidth}
				\centering
				\includegraphics*[width =\textwidth,keepaspectratio,clip]{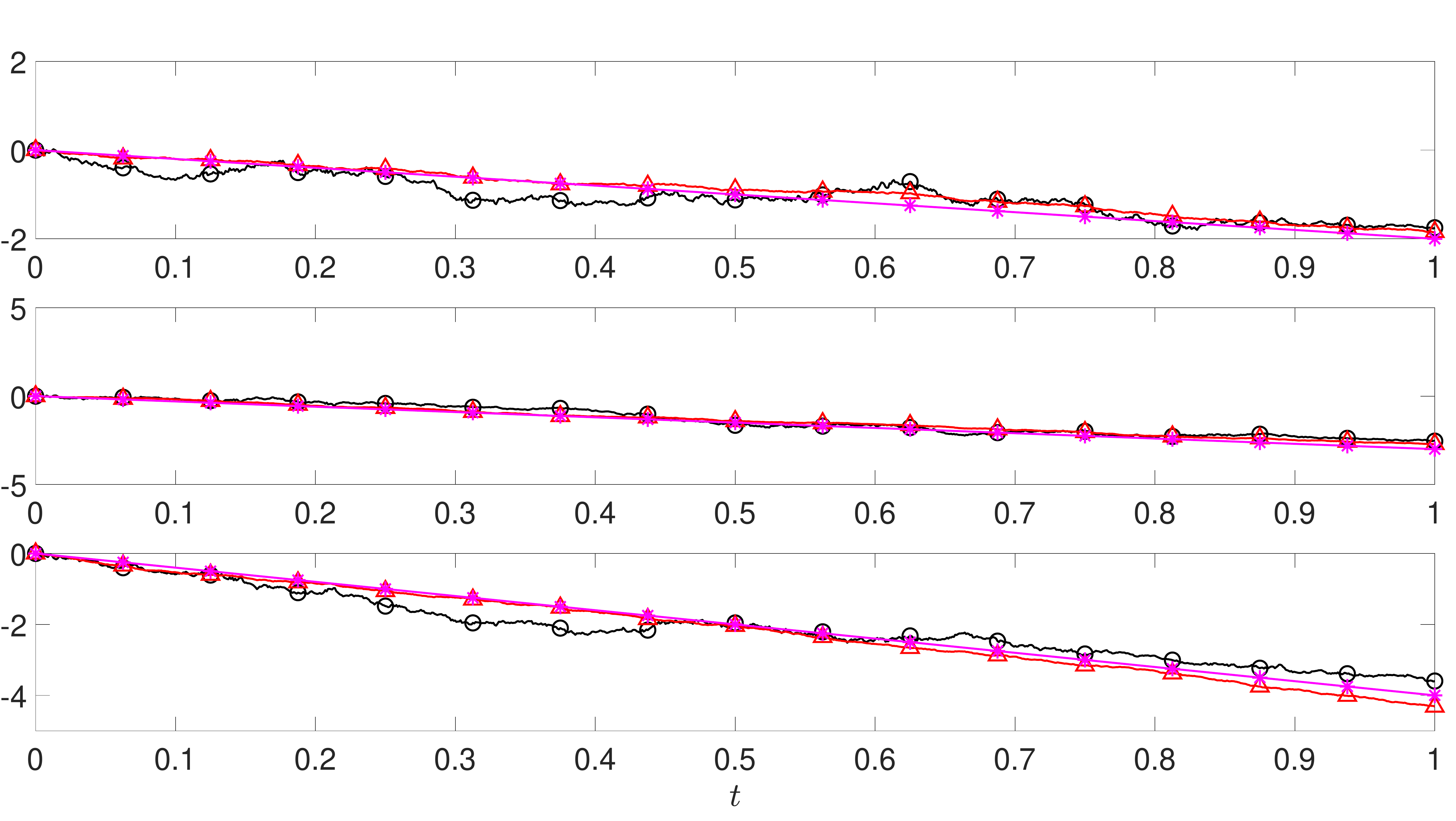}
				\caption{Evolution of mass center.}
			\end{subfigure}
			~ 
			\begin{subfigure}[t]{0.49\textwidth}
				\centering
				\includegraphics*[width =\textwidth,keepaspectratio,clip]{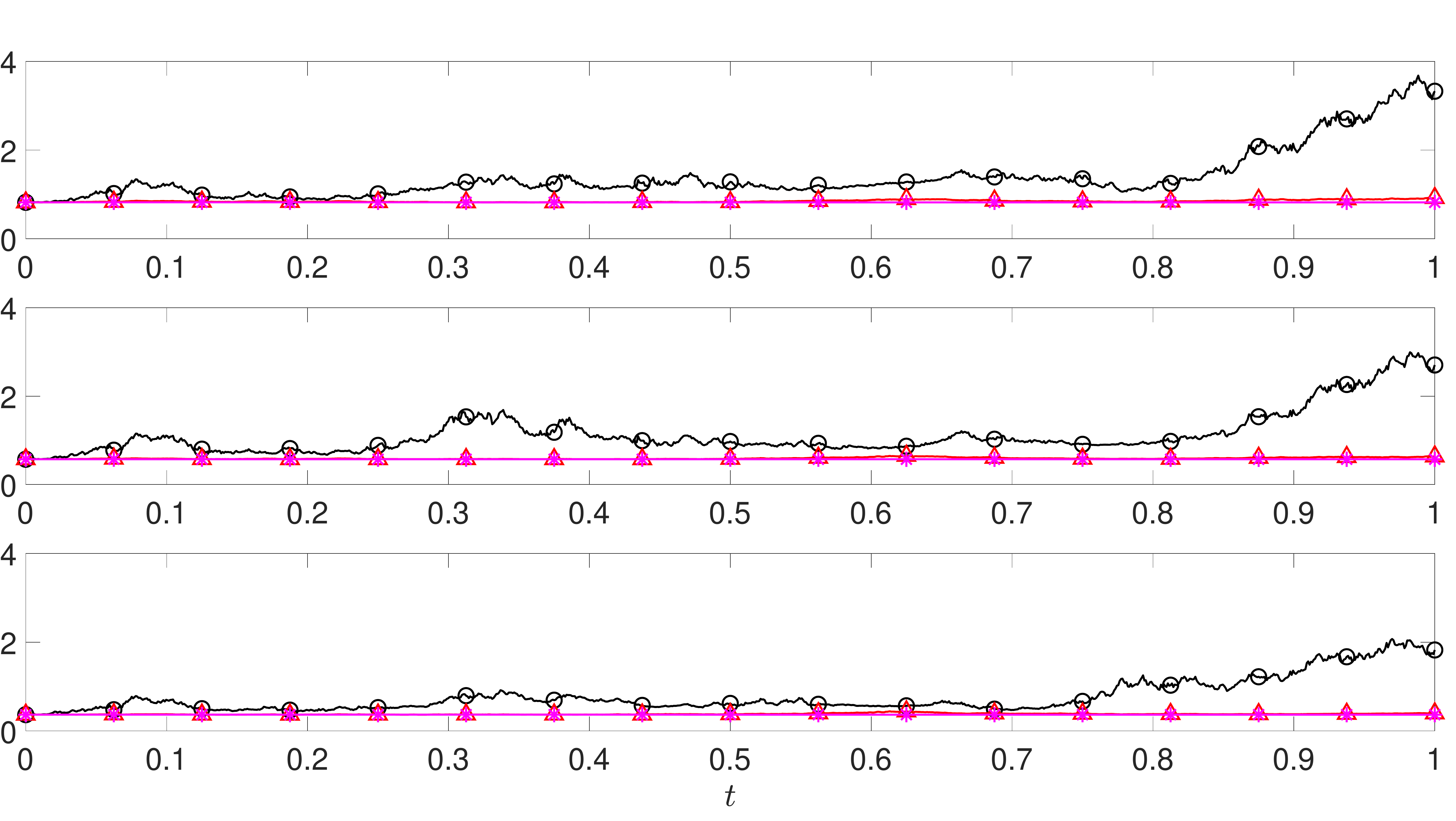}
				\caption{Evolution of pulse width.}
			\end{subfigure}
			
			\caption{
				Time evolution of various quantities of the deterministic ({\color{magenta}\textasteriskcentered{}}) and stochastic ($\circ$ and {\color{red}$\triangle$} for $\gamma=1$ and $\gamma=1/20$ respectively) numerical approximations of equation \eqref{eq:halfManakov}.
				Top to bottom: the initial values are equation \eqref{eq:hasegawa} with coefficient set $1$, $2$, and $3$, from Table~\ref{table:hasegawaCoeff}.
				\label{fig:H1HamilSoliton}
			}
		\end{figure}
		
		When $\gamma=0$, we see how the soliton produces the expected drift of the mass center.
		In addition, hardly visible in these figures, is the fact that the Lie--Trotter splitting scheme 
		does not exactly preserve the $\H^1$-norm, Hamiltonian, or the pulse width of the soliton. Their respective evolutions instead oscillate around their starting values, with the amplitude of the oscillation decreasing as $h$ decreases.
		When $\gamma>0$, it can be clearly observed that the presence of the noise prevents the preservation of the $\H^1$-norm, 
		Hamiltonian and the pulse width of solitons. Furthermore, it can be noted how the pulse width only varies slightly for $\gamma = 1/20$. Finally, the conjecture posed in \cite{Gazeau:13}, stating that the soliton is stable and not strongly destroyed for small noise and short distances, seems to hold.

		\begin{figure}[h!]
			\centering
			\begin{subfigure}[t]{0.49\textwidth}
				\centering
				\includegraphics*[width =\textwidth,keepaspectratio,clip]{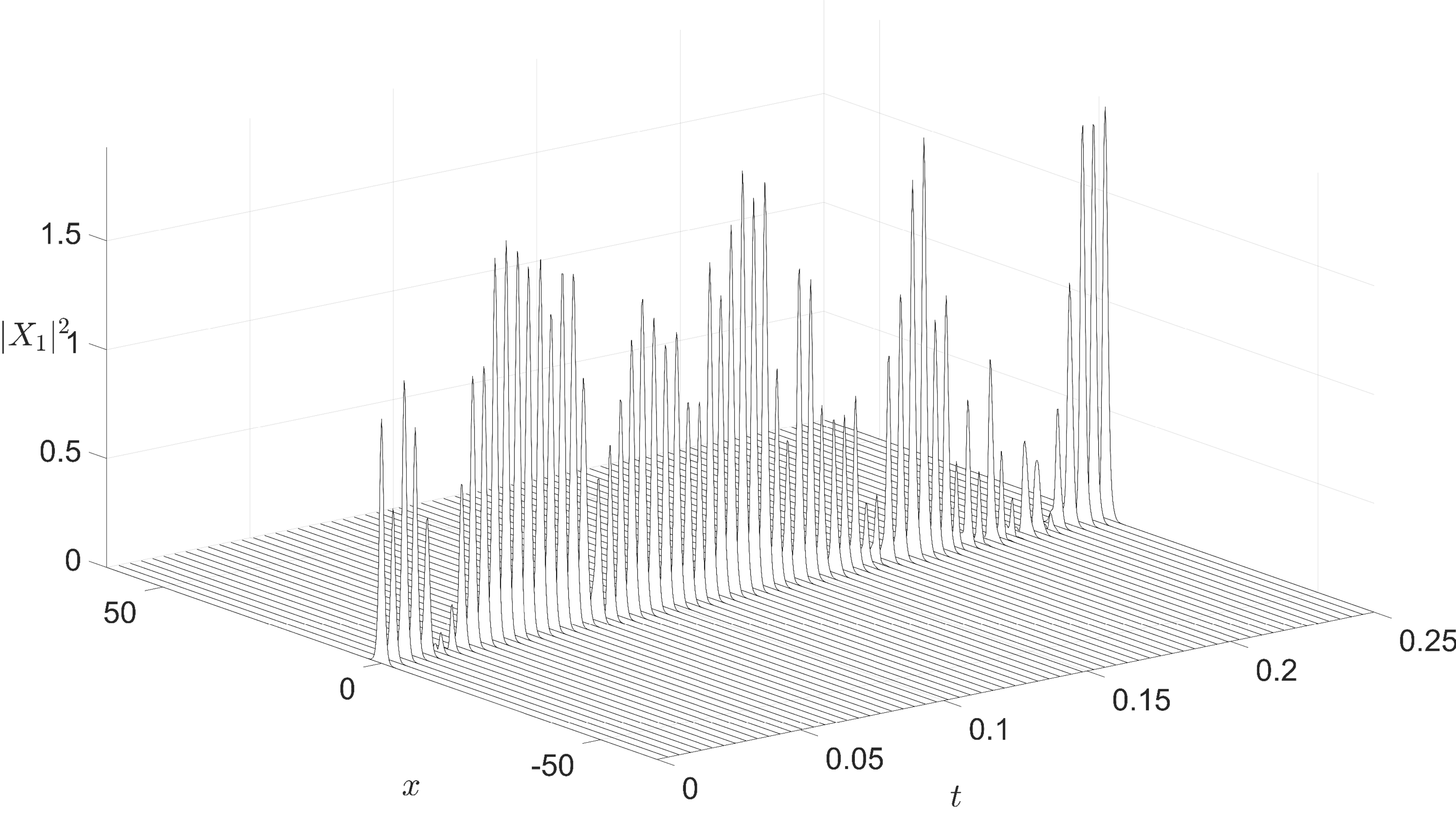}
			\end{subfigure}
		~ 
			\begin{subfigure}[t]{0.49\textwidth}
				\centering
				\includegraphics*[width =\textwidth,keepaspectratio,clip]{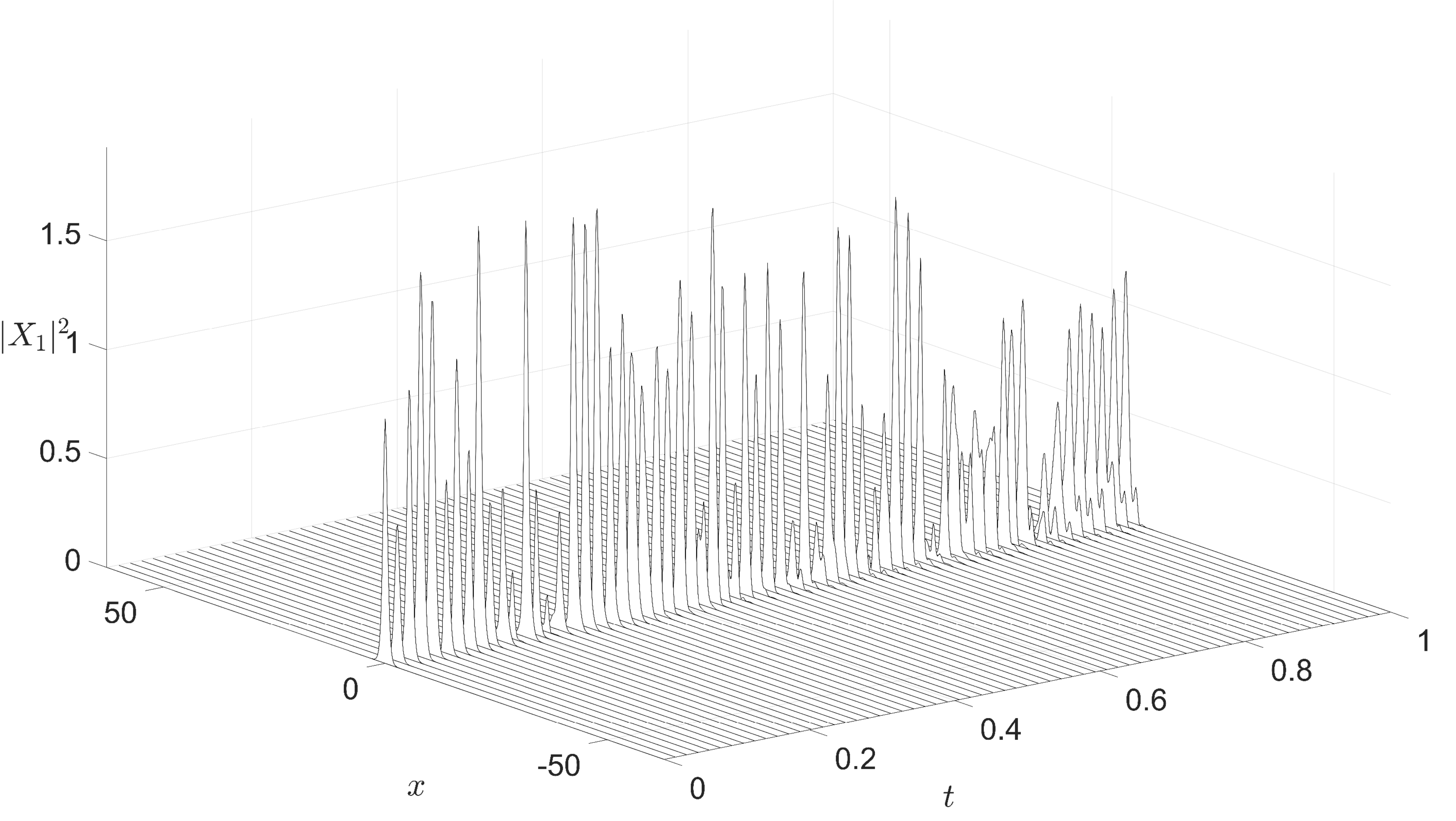}
			\end{subfigure}
			
			\begin{subfigure}[t]{0.49\textwidth}
				\centering
				\includegraphics*[width =\textwidth,keepaspectratio,clip]{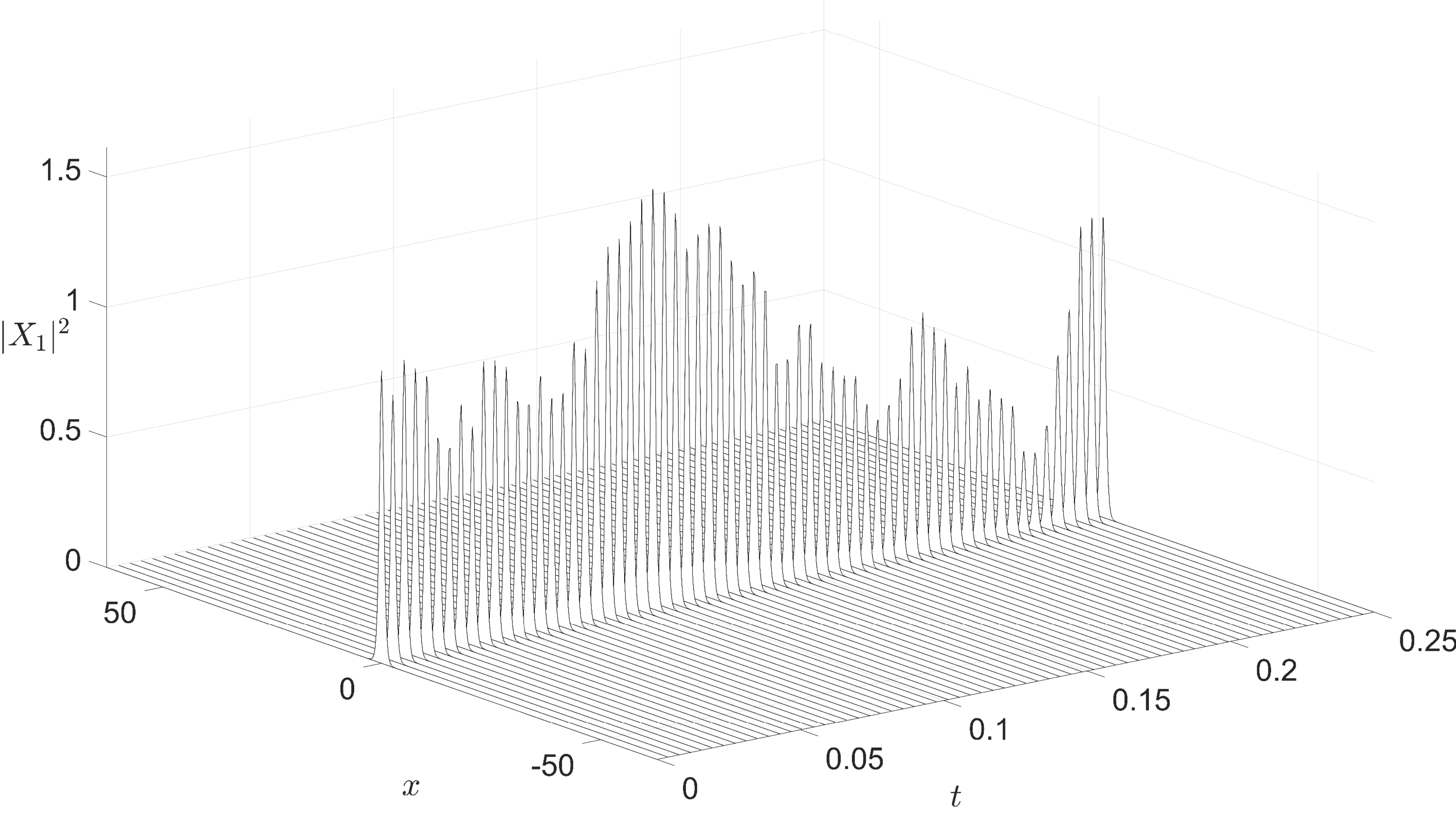}
			\end{subfigure}
		~ 
			\begin{subfigure}[t]{0.49\textwidth}
				\centering
				\includegraphics*[width =\textwidth,keepaspectratio,clip]{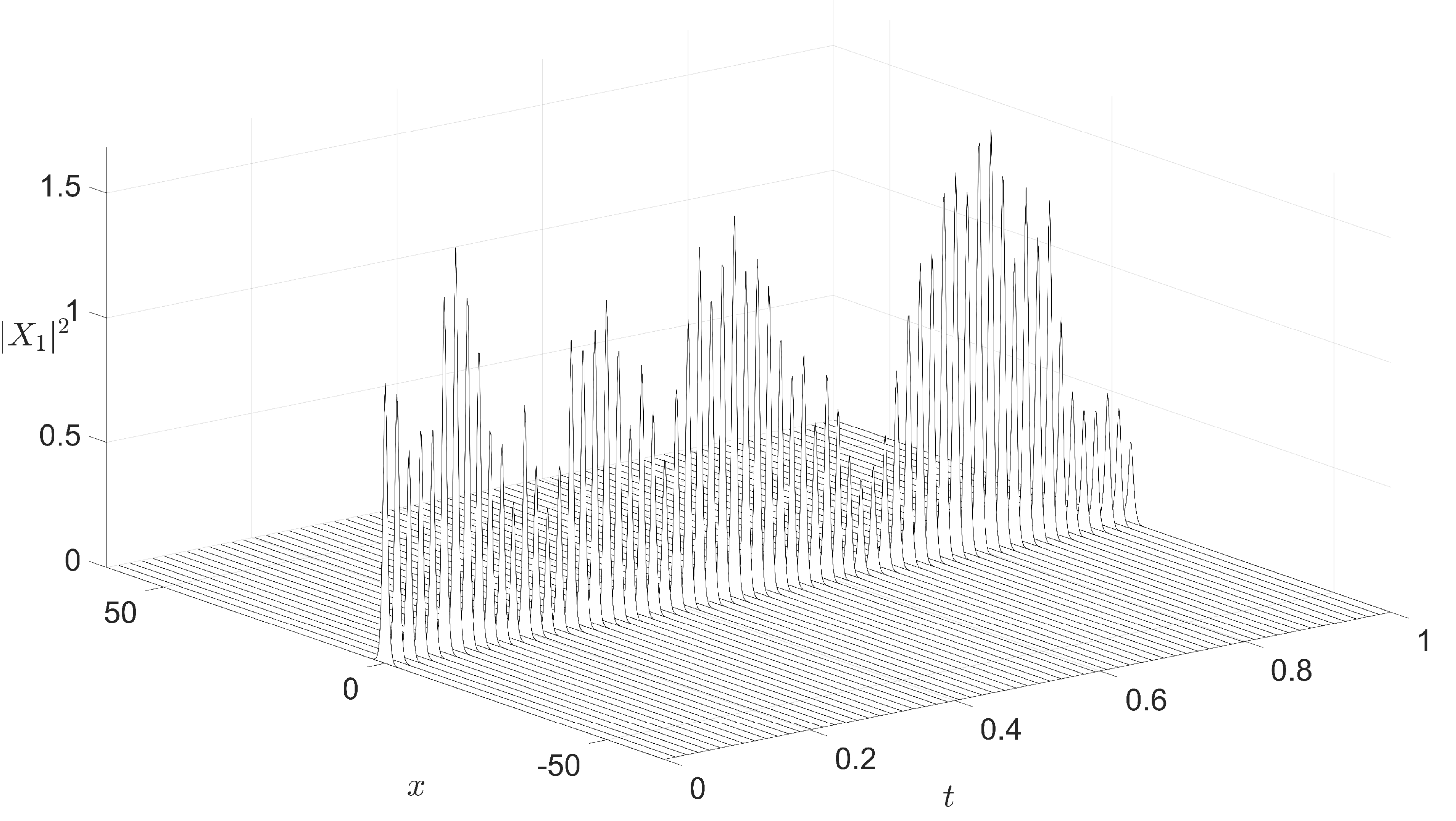}
			\end{subfigure}
			
			\begin{subfigure}[t]{0.49\textwidth}
				\centering
				\includegraphics*[width =\textwidth,keepaspectratio,clip]{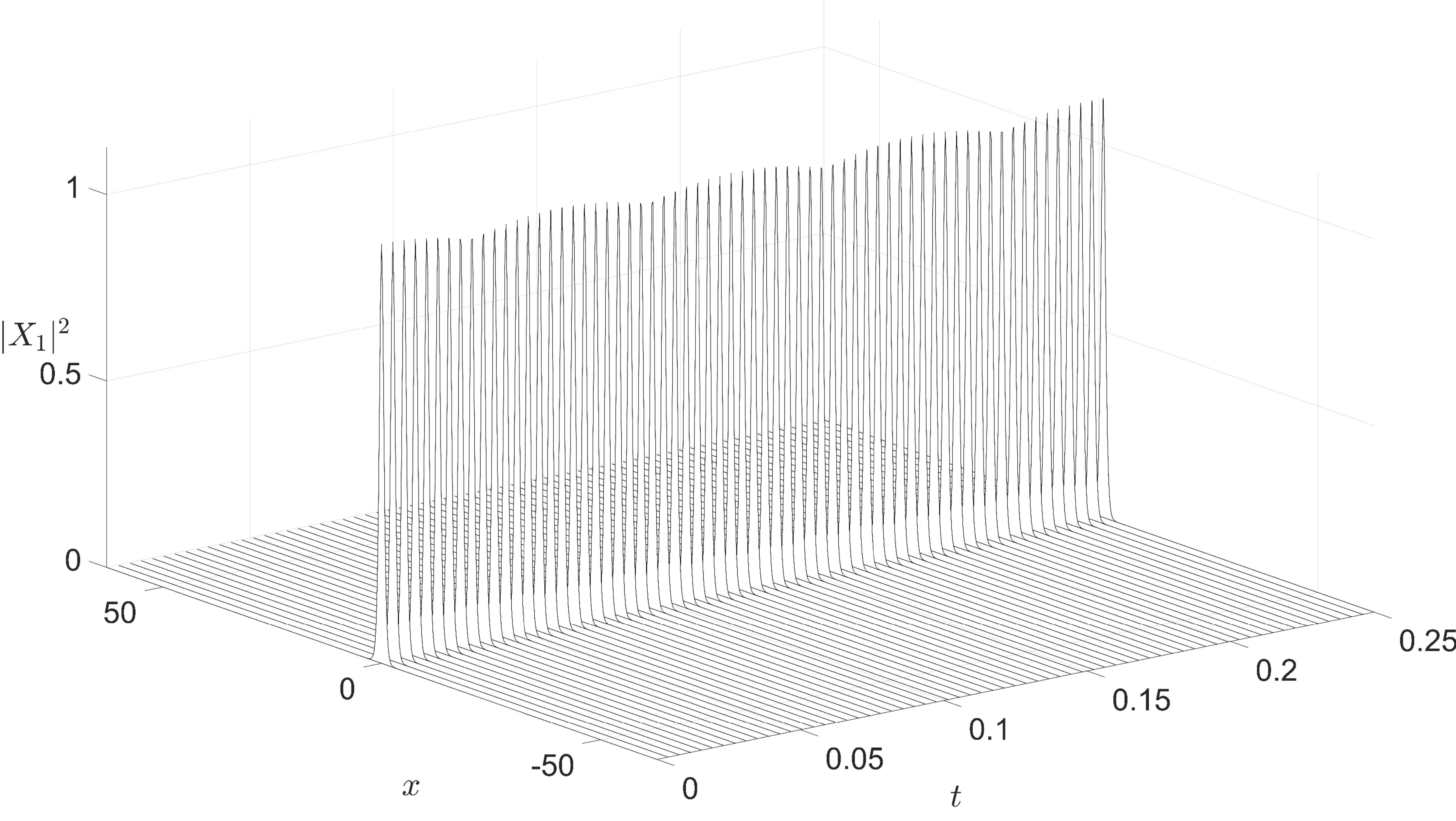}
			\end{subfigure}
		~ 
			\begin{subfigure}[t]{0.49\textwidth}
				\centering
				\includegraphics*[width =\textwidth,keepaspectratio,clip]{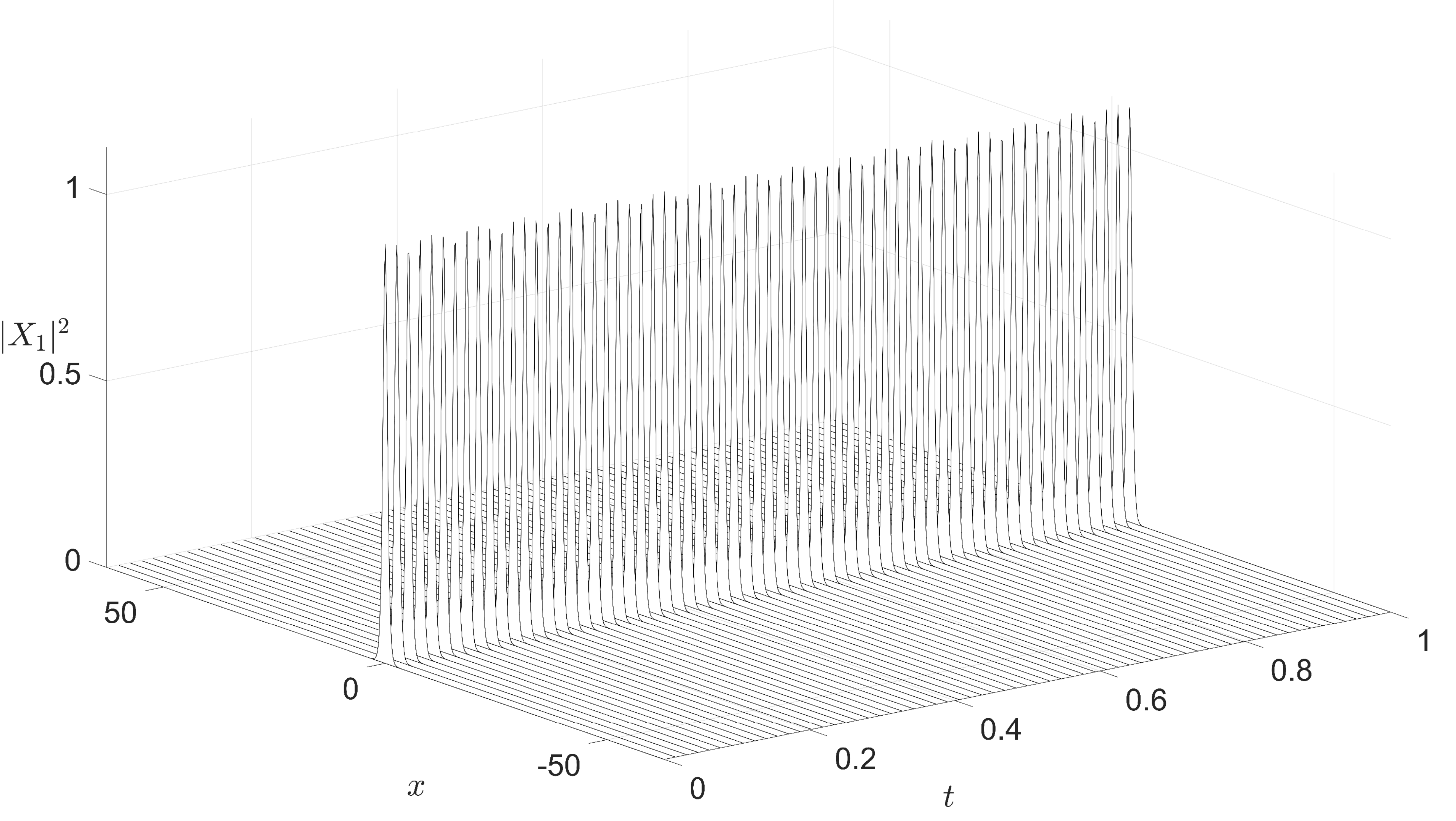}
			\end{subfigure}
			\caption{
				Evolution of the square of the modulus of the first components, $X_1$, up to $t=2.5$ (left) and $t=10$ (right) of the deterministic (bottom) and stochastic ($\gamma = 1$ top and $\gamma=1/20$ middle) numerical approximations of \eqref{eq:halfManakov} with initial value equation \eqref{eq:hasegawa} and coefficient set $3$ from Table~\ref{table:hasegawaCoeff}.
				\label{fig:SolitonWaterfall}
			}
		\end{figure}
		
		\subsection{Conjecture on the critical exponent}
		A stochastic partial differential equation related to the stochastic Manakov equation \eqref{eq:Manakov} 
		or \eqref{eq:halfManakov} is the nonlinear Schr\"odinger equation with white noise dispersion (NLSw) in dimension $d$
		\[
		\begin{cases}
		i\text du + \Delta u\circ \text d\beta + |u|^{2\sigma}u\,\text dt=0\nonumber\\
		u(0)=u_0,
		\end{cases}
		\]
		where $u=u(x,t)$, is a complex valued random process, with $t\geq0$ and $x\in\R^d$, 
		$\Delta u=\displaystyle\sum_{j=1}^d\frac{\partial^2 u}{\partial x_j^2}$ denotes the Laplacian in $\R^d$, 
        $\sigma$ a positive real numbers,
		$\beta=\beta(t)$ is a real valued standard Brownian motion, and
		$u_0$ is a given initial value, see for instance \cite{MR3736655}. Solutions to this SPDE may blowup in finite time, see details below, 
		depending on the choice of the power-law $\sigma$. 
		We therefore introduce a general power-law nonlinearity in the stochastic Manakov equation \eqref{eq:halfManakov} and consider
		\begin{equation}
		\label{eq:ManakovCrit}
		i\text{d}X
		+ \frac{1}{2} \partial^2_x X \,\text{d}t
		+ i \sqrt{\gamma} \sum_{k=1}^{3} \sigma_k \partial_x X \circ \text{d} W_k
		+ |X|^{2\sigma} X \,\text{d}t= 0,
		\end{equation}
		where $\sigma\in\R_+$. The aim of the following numerical experiments is to numerically 
		investigate possible blowup of the stochastic Manakov equation with power-law nonlinearity. 
		
		Following the convention of e.\,g. \cite{MR1886808}, given $u_0 \in \H^1$ and $\omega\in\Omega$, 
		we define the blowup time of the process $X$ by
		$$
		\tau(u_0,\omega) = \inf \{\tau \in[0,+\infty]:
		\lim_{t \to \tau} \norm{X(\cdot,t,\omega)}_1 = +\infty,
		X(0) = u_0
		\}.
		$$
		We say that an exponent $\sigma_\text{crit}$ is critical for equation \eqref{eq:ManakovCrit} if on the one hand
		$\tau(u_0,\omega) = +\infty$ for all $\sigma < \sigma_\text{crit}$ and all $u_0$ and on the other hand $\tau(u_0,\omega) < +\infty$ for all $\sigma > \sigma_\text{crit}$ for some $u_0$. The exponents $\sigma < \sigma_\text{crit}$ and $\sigma > \sigma_\text{crit}$ would be called subcritical exponents and supercritical exponents respectively.
		It has been shown that the NLSw has solutions in $H^1$ for dimension 
		$d = 1$ and $\sigma = 2$, \cite[Theorem 2.2]{MR2832639}, 
		and for $\sigma < 2/d$ in any dimension, \cite[Theorem 2.3]{MR2652190}.
		It is also conjectured, see \cite{bbd15}, that the critical exponent in the stochastic case is 
		$\sigma_{\text{crit}} = 4/d$, twice that of the deterministic case.
		Extensive numerical experiments on the NLSw are presented in \cite{bbd15,MR3736655,bcd21}.
		Identification of critical exponents for the stochastic Manakov equation \eqref{eq:ManakovCrit} 
		is still an open problem, as discussed in \cite{MR3166967} for the cubic case ($\sigma=1$). 
		
		Let us first investigate possible blowup of solutions to equation \eqref{eq:ManakovCrit} in the cubic case, 
		i.\,e. when $\sigma=1$, for $\gamma=0$ (deterministic case) and $\gamma=1$ by observing how the 
		$\H^1$-norms evolve over a sufficiently long time interval. 
		These simulations use the following four initial values:
		The initial value given by the soliton \eqref{eq:hasegawa} with the usual parameters given at the end of 
		the introduction of Section~\ref{sec-numexp},
		a sum of solitons \eqref{eq:hasegawa} (see \cite{Gazeau:13}) with arbitrarily chosen coefficients
		\begin{equation}
		\label{IV2}
		X_{0,2}
		= X_{0}(5,0,0,0,\pi/4,0,0) 
		+ X_{0}(1,\pi/3,0,0,\pi/4,3,0),
		\end{equation}
		a Gaussian initial value
		\begin{equation}
		\label{IV3}
		X_{0,3} =\begin{pmatrix}
		3\exp\left(-10x^2\right)\\
		2\exp\left(-5x^2\right)
		\end{pmatrix},
		\end{equation}
		and a modification of equation \eqref{eq:hasegawa}
		\begin{equation}
		\label{IV4}
		X_{0,4}
		= X_{0}(1,0,0,0,\pi/4,0,0) + 
		\begin{pmatrix}
		\cos(x)\exp(-x^2)\\
		0
		\end{pmatrix}.
		\end{equation}
		In order to avoid too long computational times, we perform this numerical experiment using a periodic boundary condition and a pseudospectral spatial discretization.
		We consider numerical discretizations with the following parameters: 
		$a = 20\pi$, 
		$M = 2^{13}, 2^{15}$ Fourier modes,
		$T = 40$,
		$N = 2^k$, $k = 15, 17, 19$,
		and $h = T/N$.
		We then, at the time grids $\left\{t_n\right\}_{n=0}^N = \left\{nh\right\}_{n=0}^N$, 
		compute the mean of the $\H^1$-norms of the numerical solutions,
		$$\mathcal{H}_n
		=
		\E\left[
		\norm{X^n}_{\H^1}
		\right].
		$$
		The expectations are approximated using $48$ samples. The results of these numerical experiments are presented in Figure~\ref{fig:H1DetEvolCubic} (deterministic case) and Figure~\ref{fig:H1EvolCubic}.

		

		\begin{figure}[h!]
			\centering
			\begin{subfigure}[t]{0.35\textwidth}
				\centering
				\includegraphics*[width =\textwidth,keepaspectratio,clip]{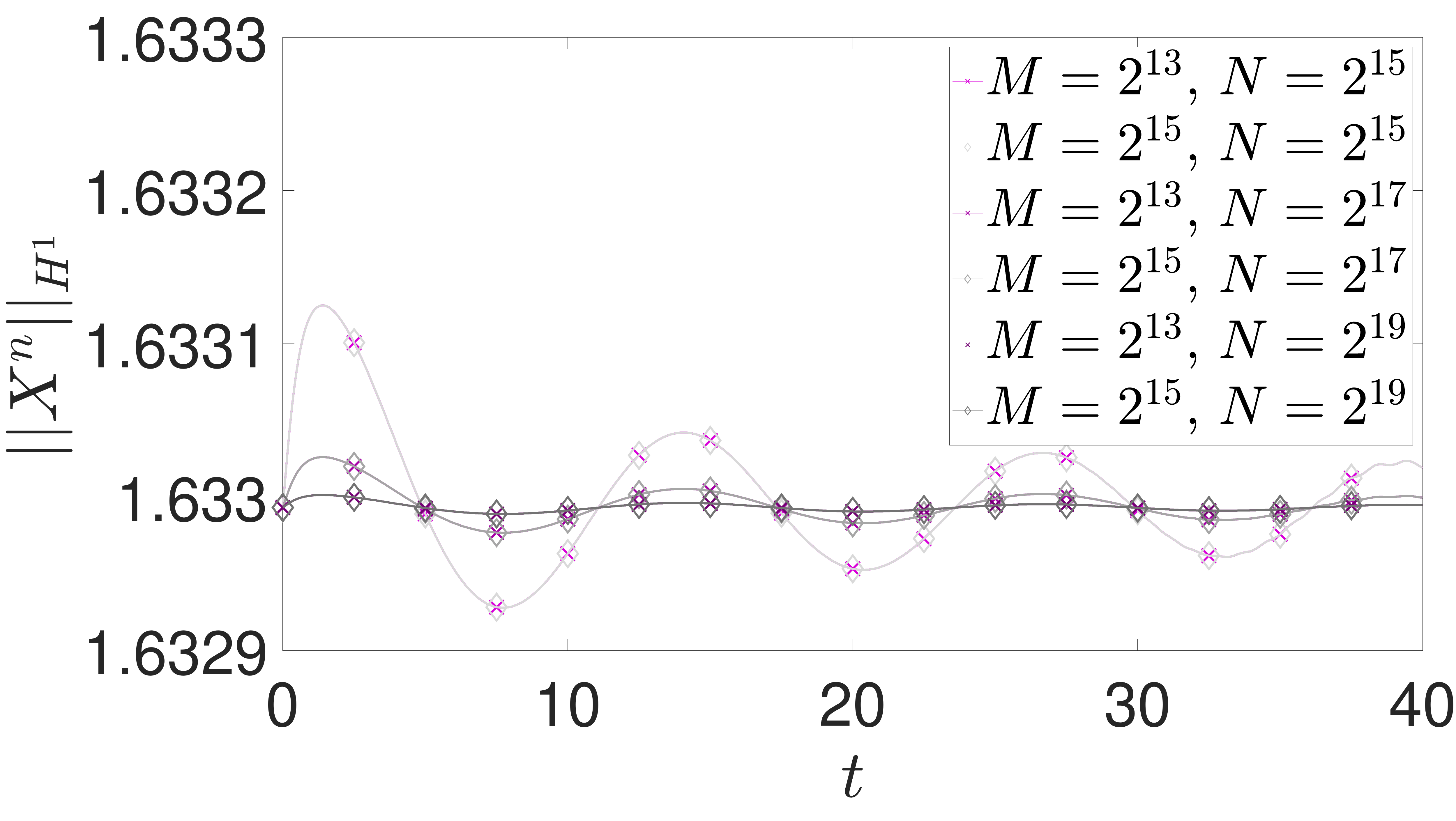}
			\end{subfigure}
			~ 
			\begin{subfigure}[t]{0.35\textwidth}
				\centering
				\includegraphics*[width =\textwidth,keepaspectratio,clip]{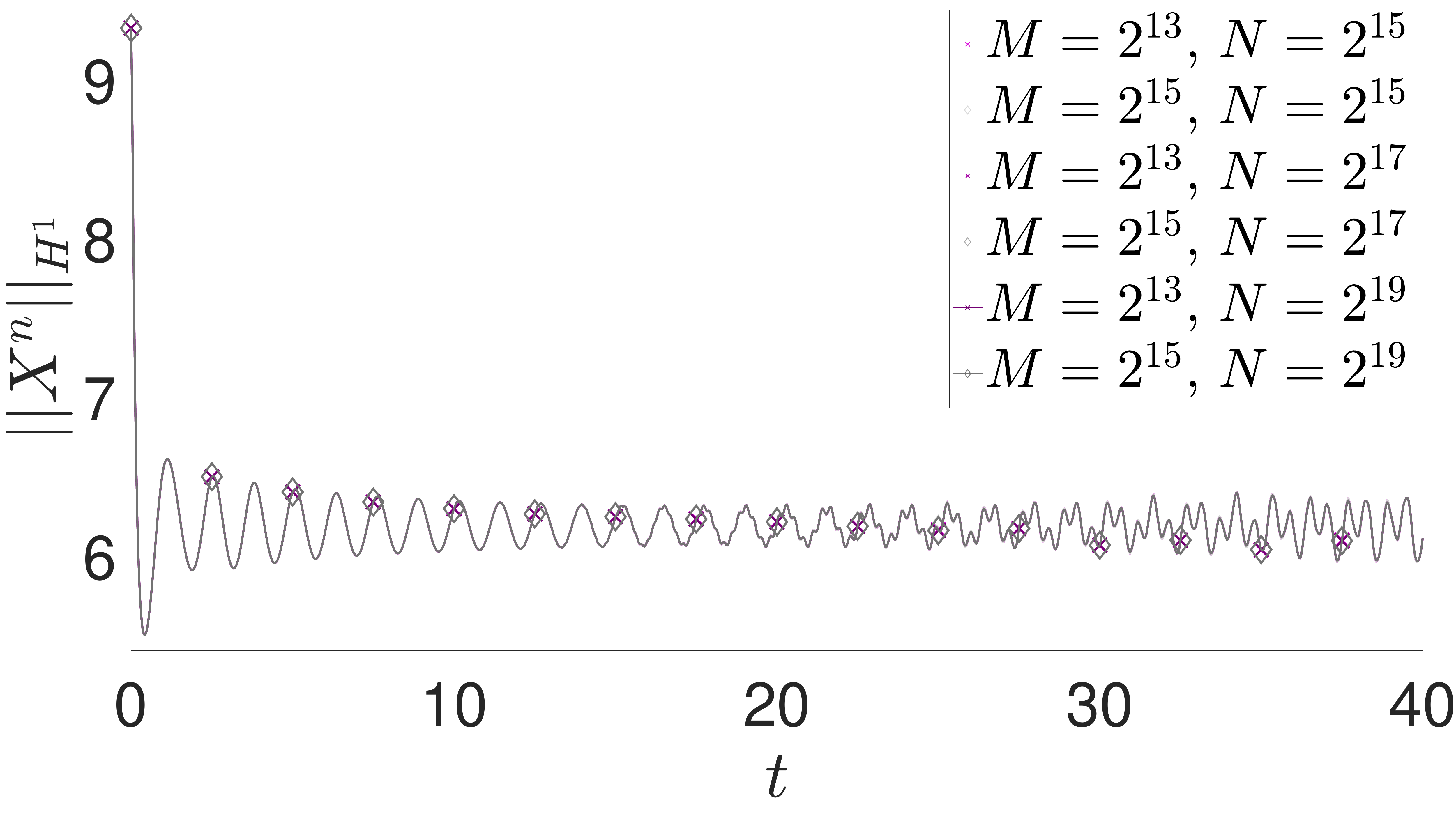}
			\end{subfigure}
			
			\begin{subfigure}[t]{0.35\textwidth}
				\centering
				\includegraphics*[width =\textwidth,keepaspectratio,clip]{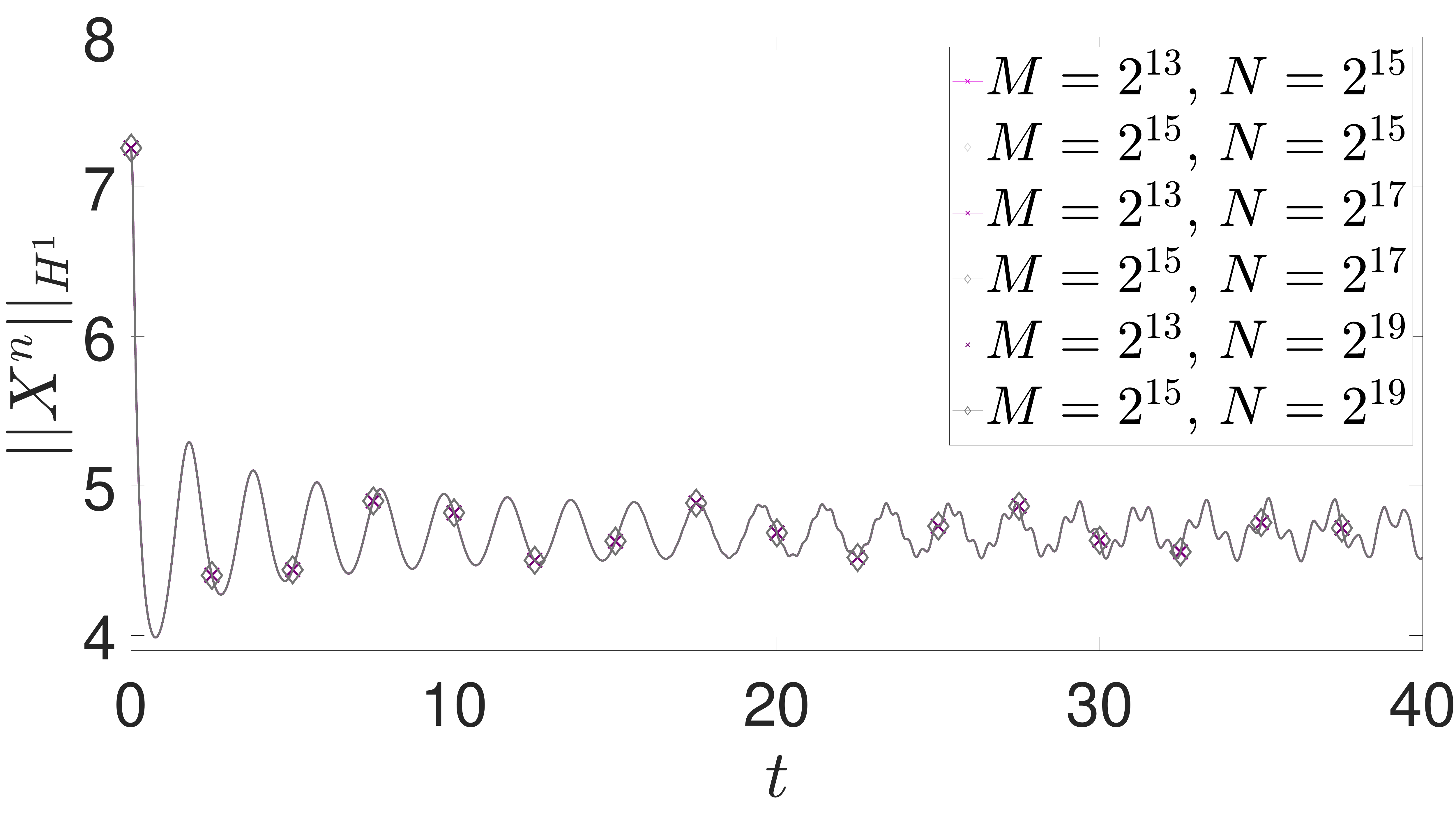}
			\end{subfigure}
			~ 
			\begin{subfigure}[t]{0.35\textwidth}
				\centering
				\includegraphics*[width =\textwidth,keepaspectratio,clip]{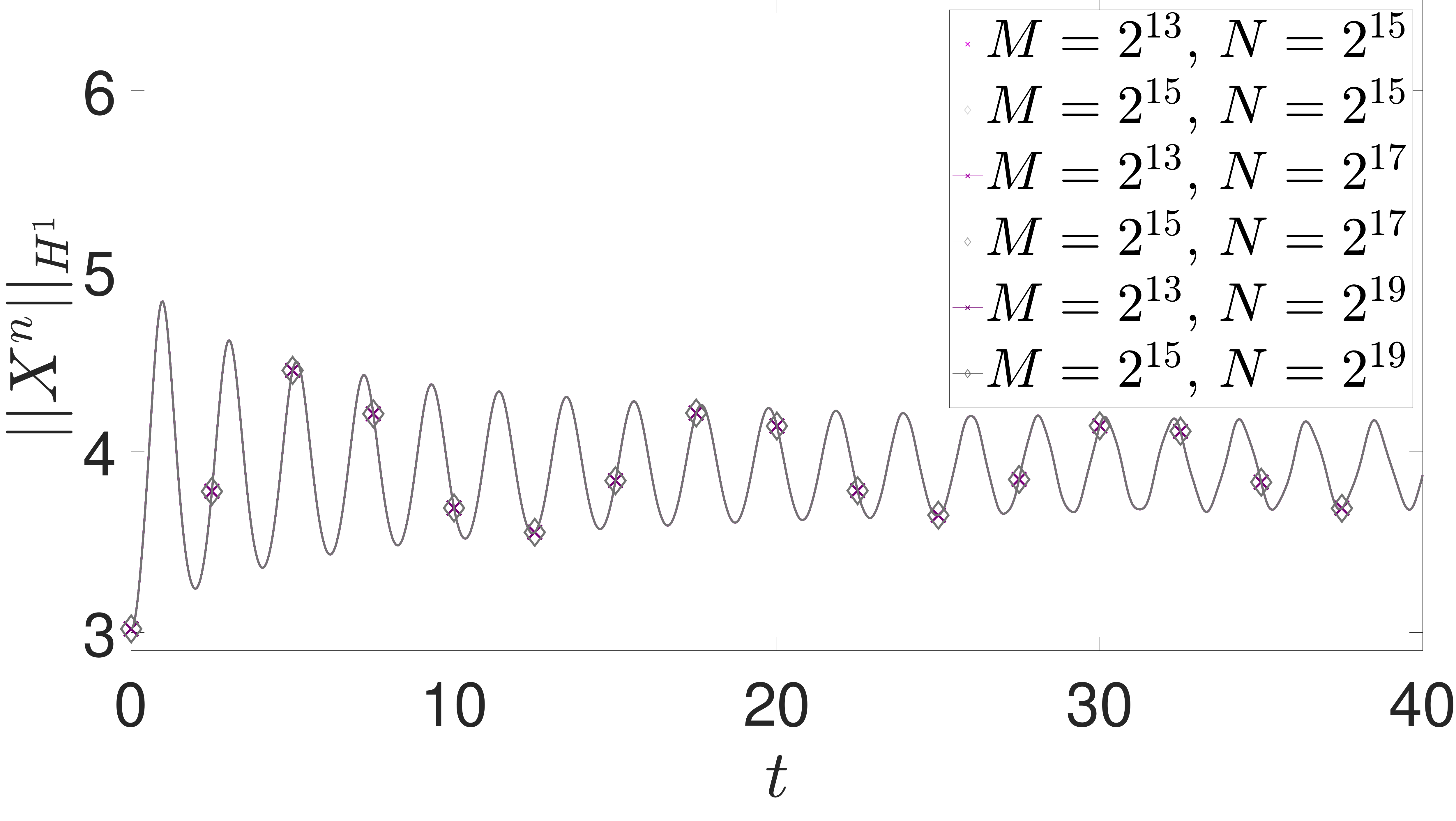}
			\end{subfigure}
			\caption{
				Evolution of $\H^1$-norm of \eqref{eq:ManakovCrit} for $\gamma = 0$ using the four initial values \eqref{eq:hasegawa}, \eqref{IV2}, \eqref{IV3}, and \eqref{IV4} (left to right, top to bottom).  
				Pink $\times$: $M = 2^{13}$, 
				grey $\diamond$: $M = 2^{15}$.
				Darker lines implies larger $N$.
				\label{fig:H1DetEvolCubic}
			}
		\end{figure}

		\begin{figure}[h!]
			\centering
			\begin{subfigure}[t]{0.35\textwidth}
				\centering
				\includegraphics*[width =\textwidth,keepaspectratio,clip]{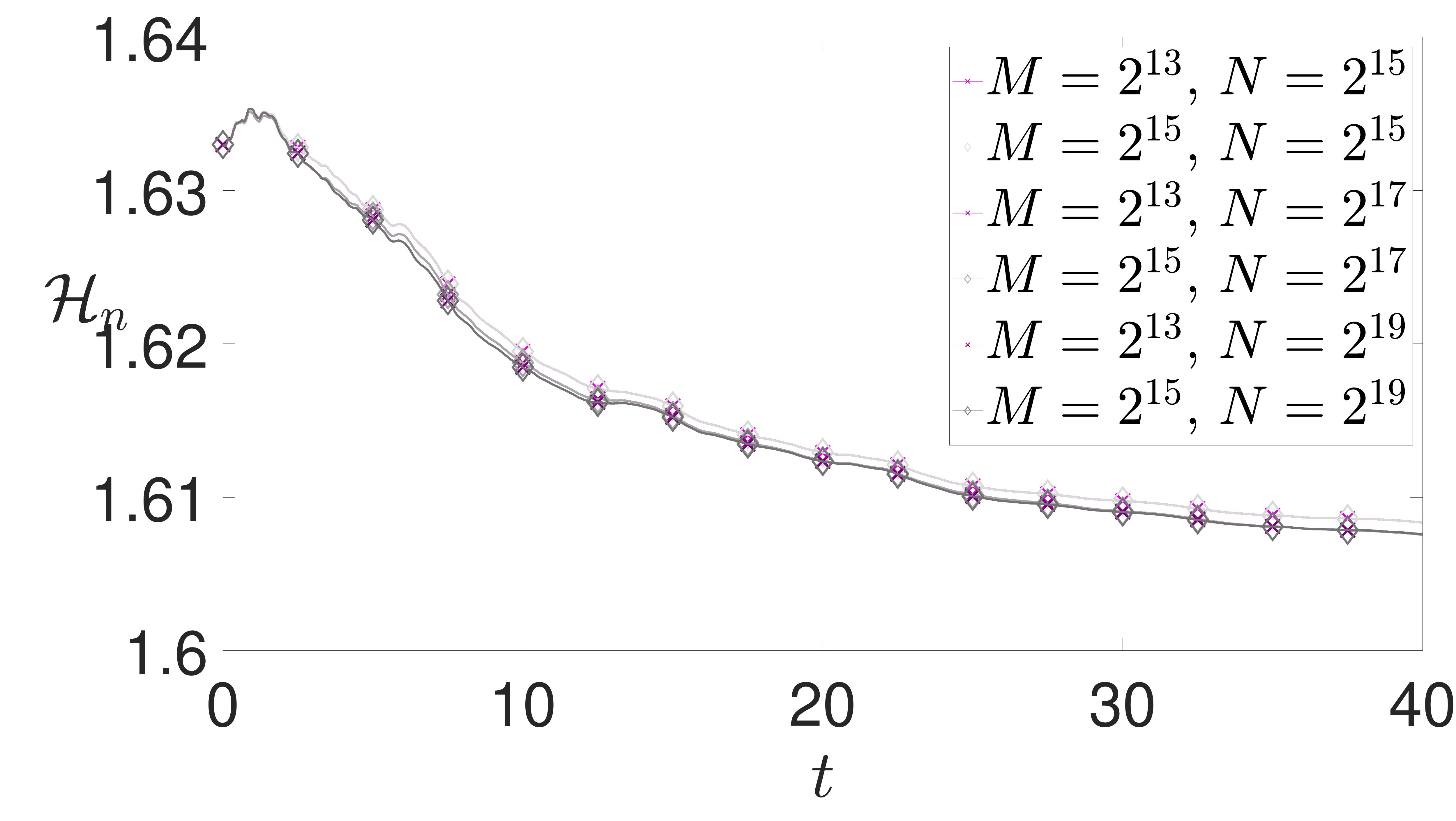}
			\end{subfigure}
			~ 
			\begin{subfigure}[t]{0.35\textwidth}
				\centering
				\includegraphics*[width =\textwidth,keepaspectratio,clip]{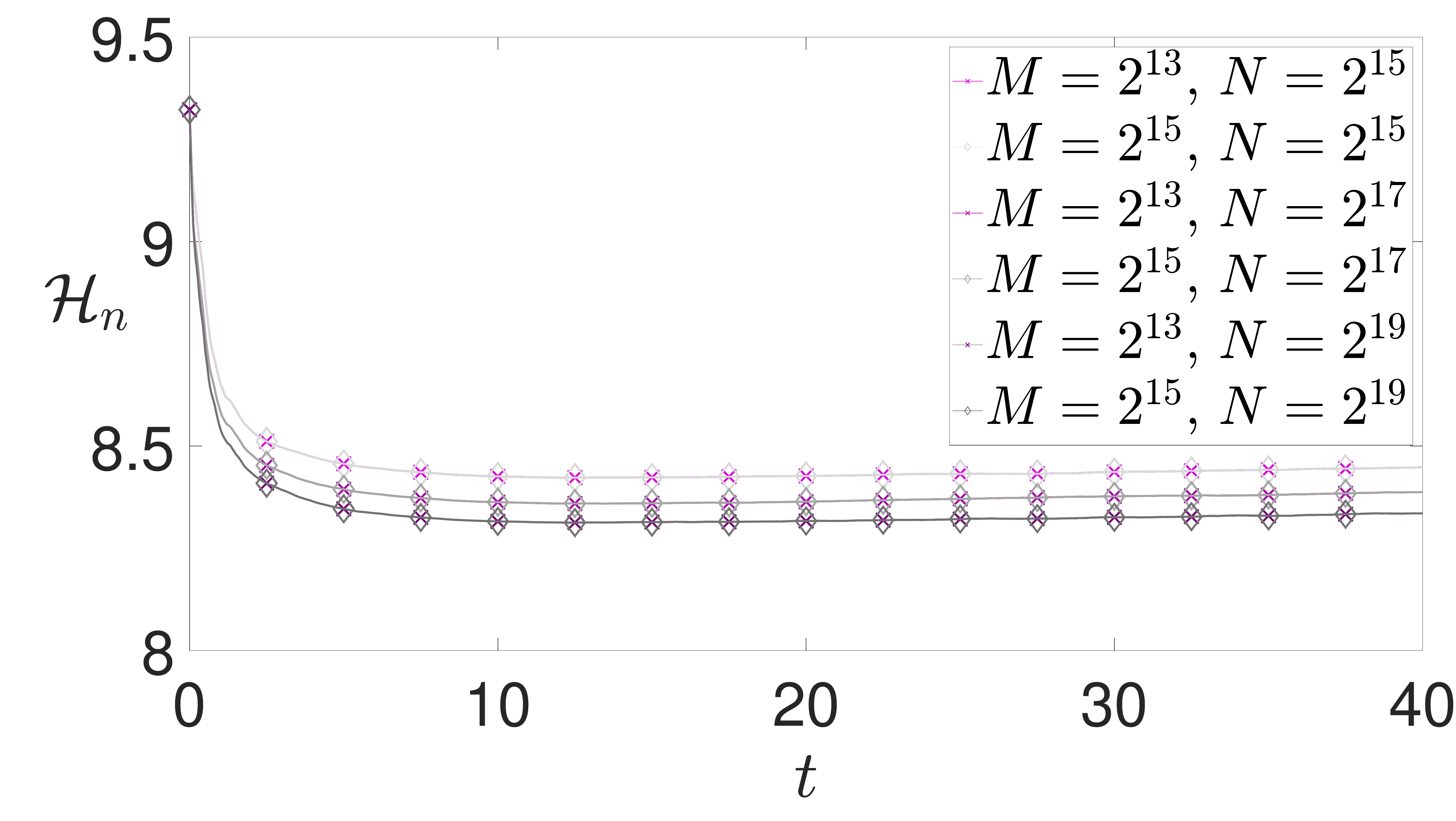}
			\end{subfigure}
			
			\begin{subfigure}[t]{0.35\textwidth}
				\centering
				\includegraphics*[width =\textwidth,keepaspectratio,clip]{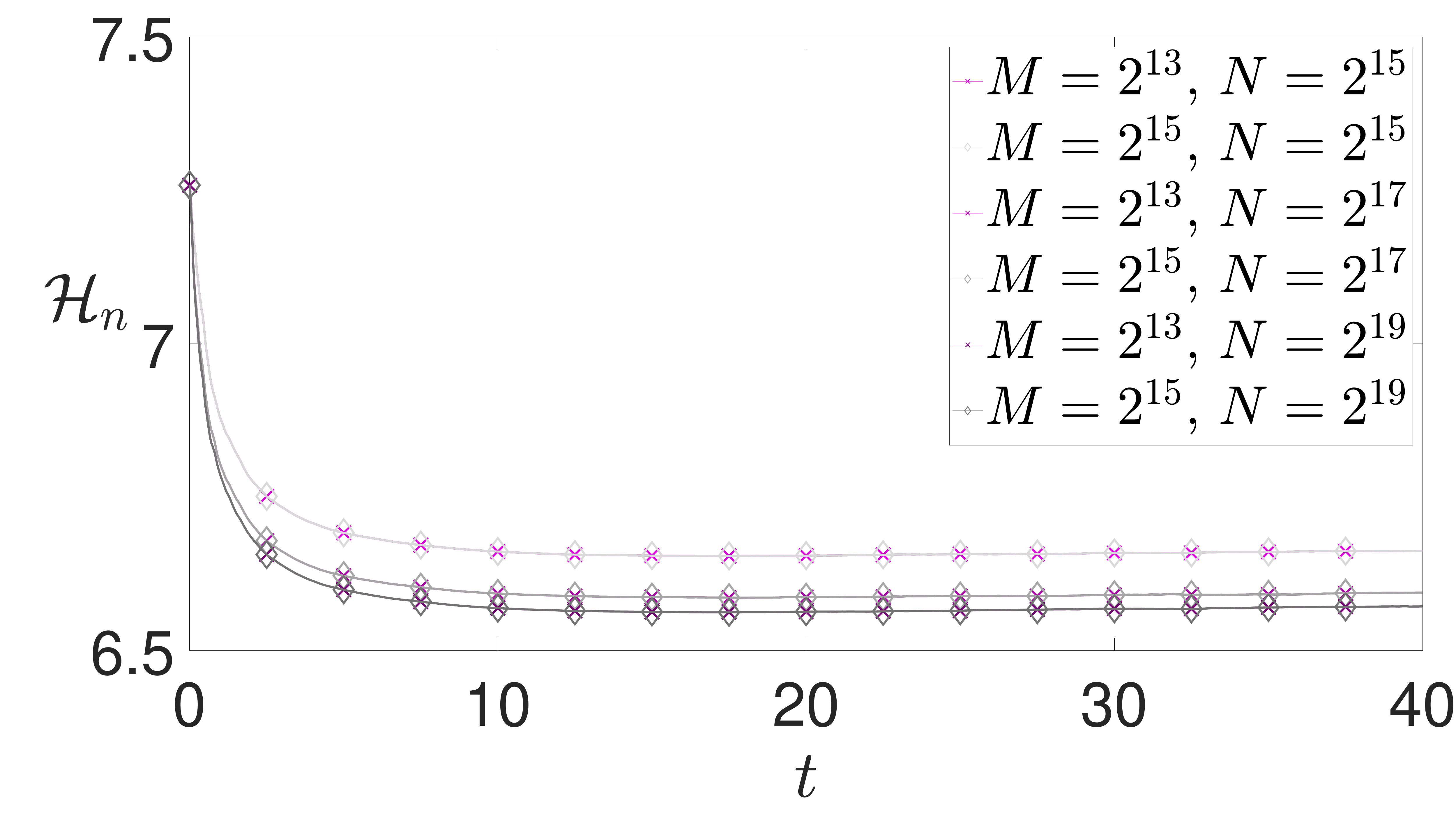}
			\end{subfigure}
			~ 
			\begin{subfigure}[t]{0.35\textwidth}
				\centering
				\includegraphics*[width =\textwidth,keepaspectratio,clip]{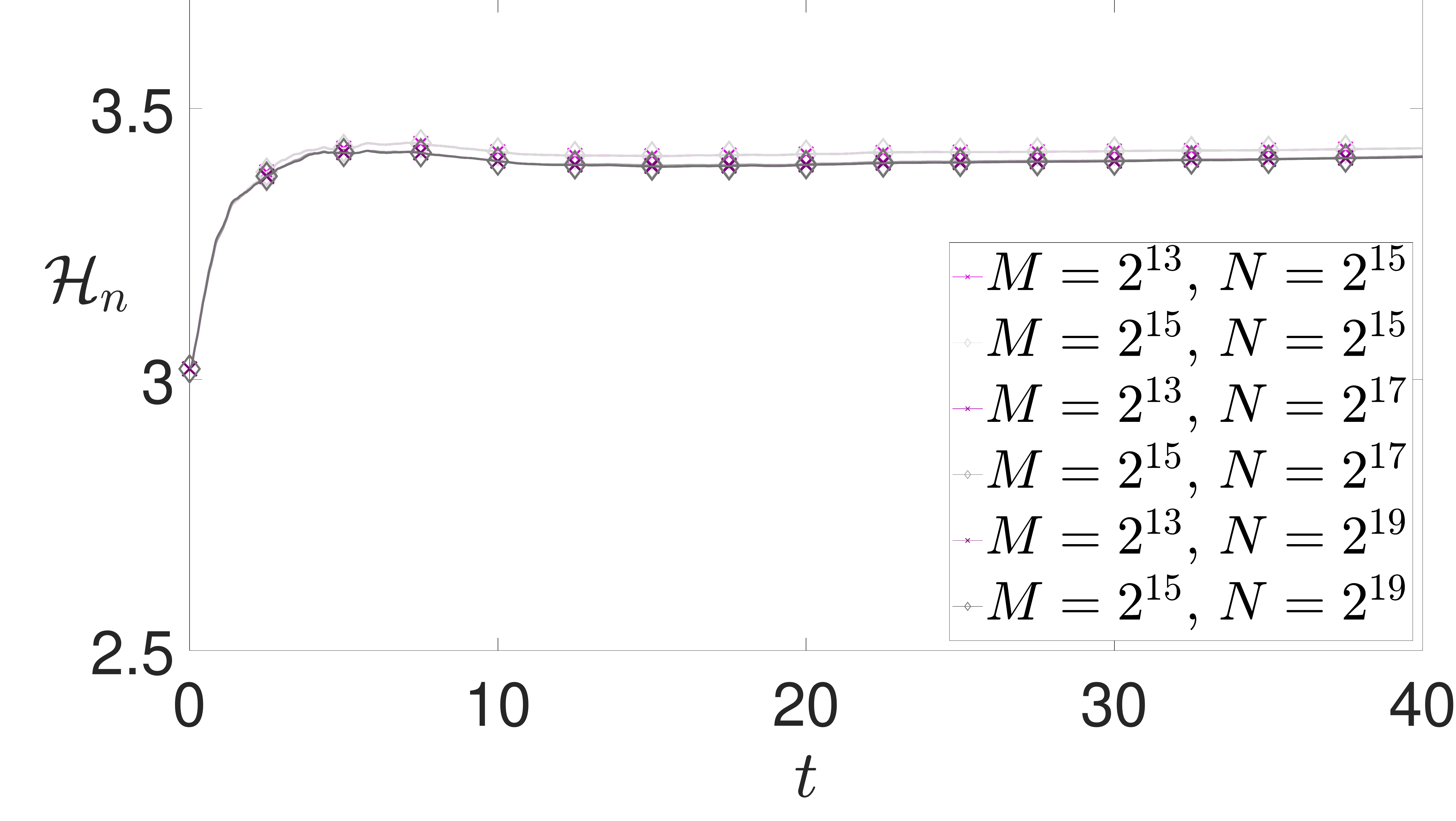}
			\end{subfigure}
			\caption{
				Evolution of the mean of $\H^1$-norms $\gamma = 1$ using the four initial values \eqref{eq:hasegawa}, \eqref{IV2}, \eqref{IV3}, and \eqref{IV4} (left to right, top to bottom). 
				Pink $\times$: $M = 2^{13}$, 
				grey $\diamond$: $M = 2^{15}$.
				Darker lines implies larger $N$.
				\label{fig:H1EvolCubic}
			}
		\end{figure}
		
		From these figures, it is clear that no indication of blowup is present, for any of the chosen discretizations or simulated samples.
		Had $\tau(u_0,\omega)<+\infty$ for any of the samples, we would have expected $\mathcal{H}_n$ to increase as $M$ and $N$ increased.
		
		Expanding on the above results, we perform two numerical experiments where we vary the exponent $\sigma = 2,3,4$ 
		in the stochastic Manakov equation with a power-law nonlinearity \eqref{eq:ManakovCrit} and the coefficient 
		$\gamma= 0$ and $\gamma=1$ (again comparing deterministic and stochastic results).
		As in the previous experiment we perform these numerical experiments using a periodic boundary condition and a pseudospectral spatial discretization.
		In order to further limit the computational time, we only consider the initial value \eqref{IV2}, abort the simulations if $\norm{X^n}_{\H^1} > 500$, and simulate only one sample per combination of $\sigma$ and $\gamma$. 
		
		The common parameters for the following two experiments are
		$T = 0.01$,
		$N = 2^k$, $k = 16, 17$,
		and $h = T/N$.
		In order to verify possible blowup we also vary the width of the interval between the two experiments, by using 
		$a = 20\pi$, 
		and $M = 2^{16}, 2^{17}$ Fourier modes
		in the first experiment and
		$a = 40\pi$, 
		and $M = 2^{17}, 2^{18}$ Fourier modes
		in the second experiment.
		For the stochastic samples we use one common Brownian motion.
		
		The results can be seen in Figure~\ref{fig:H1HigherSigma} (for the first experiment with $a = 20\pi$) and in Figure~\ref{fig:H1HigherSigma2L} (for the second experiment with $a = 40\pi$). 
		We see that the $\H^1$-norm increases sharply before exceeding $500$ in all cases but for the stochastic ($\gamma=1$) processes with $\sigma=2$.
		Further, we see that taking a finer discretization or wider interval neither prevents nor delays these sharp increases.
		This is a clear evidence of blowup, for $5$ of the $6$ combinations of $\gamma$ and $\sigma$, but we have two notable observations to make. 
		The first is that the presence of noise ($\gamma = 1$) either delays or completely prevents blowup when $\sigma = 2$. 
		The second is that an insufficient number of Fourier modes will fail to properly reflect the rapid increase in the $\H^1$-norm, as seen in both deterministic ($\gamma=0$) cases with $\sigma=4$.
		Other numerical experiments, not shown here, using finer time and spatial discretizations or the initial value \eqref{IV3} produce similar results.
		
		
		\begin{figure}[h!]
			\centering
			\begin{subfigure}[t]{0.35\textwidth}
				\centering
				\includegraphics*[width =\textwidth,keepaspectratio,clip]{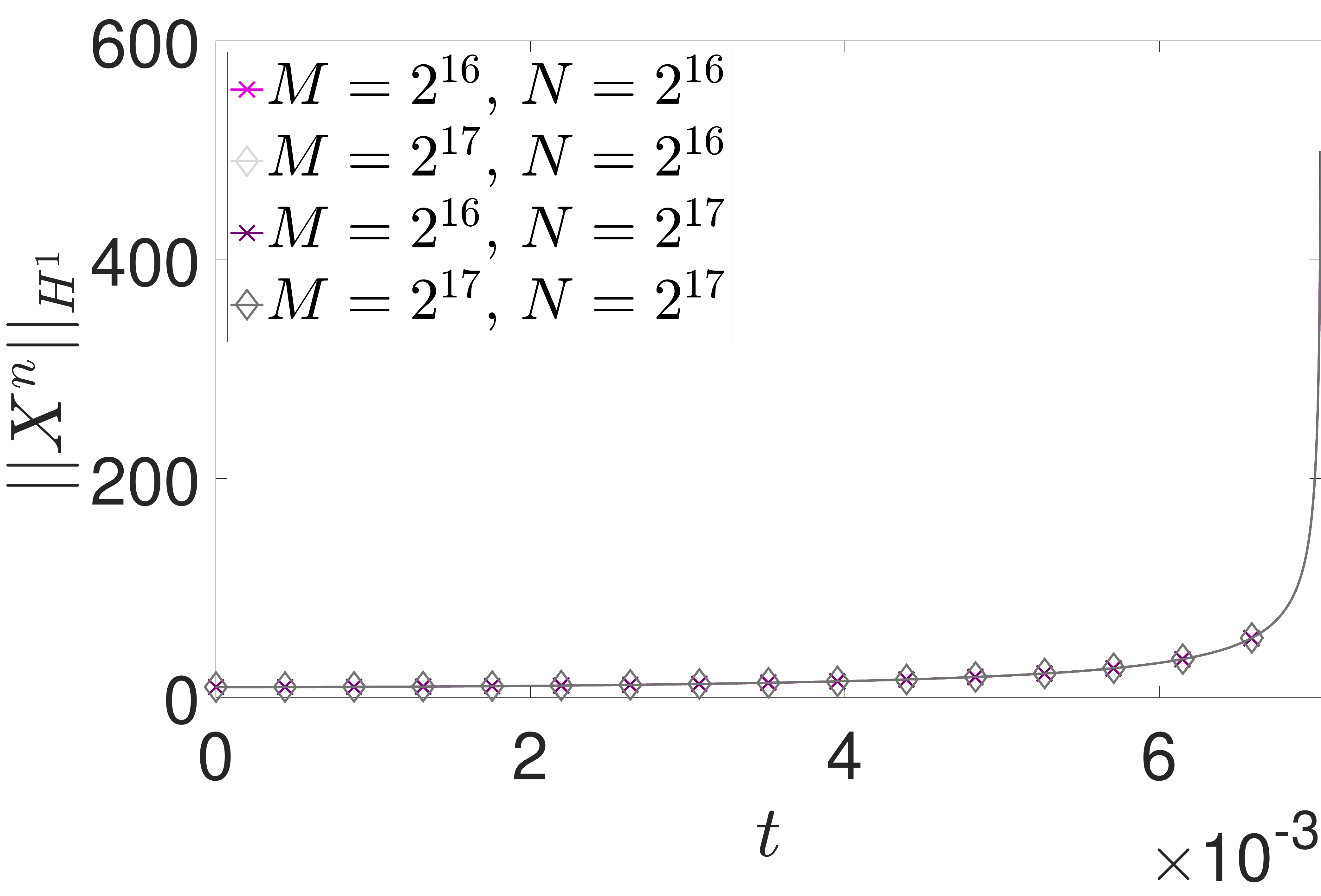}
			\end{subfigure}
			~ 
			\begin{subfigure}[t]{0.35\textwidth}
				\centering
				\includegraphics*[width =\textwidth,keepaspectratio,clip]{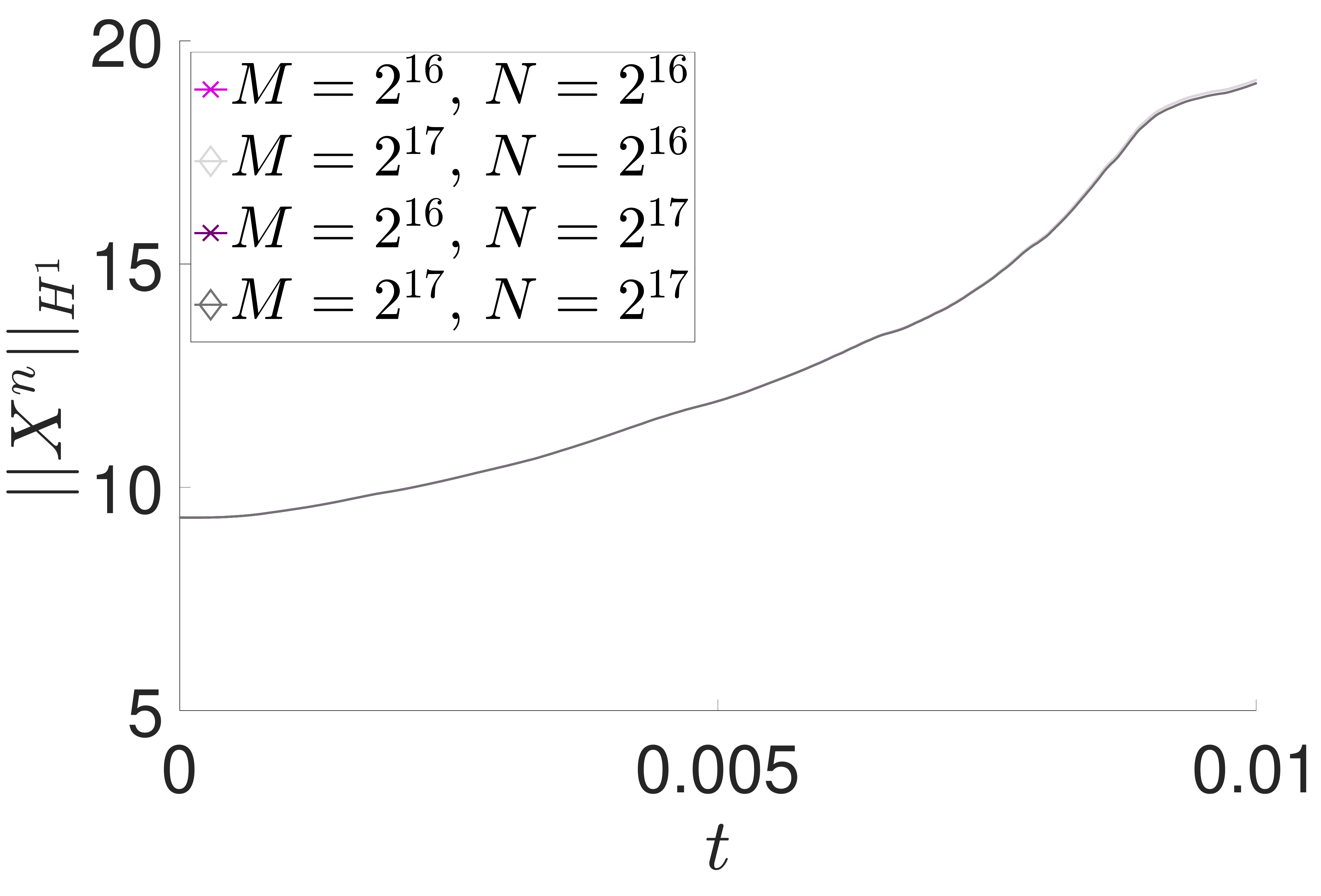}
			\end{subfigure}
			
			\begin{subfigure}[t]{0.35\textwidth}
				\centering
				\includegraphics*[width =\textwidth,keepaspectratio,clip]{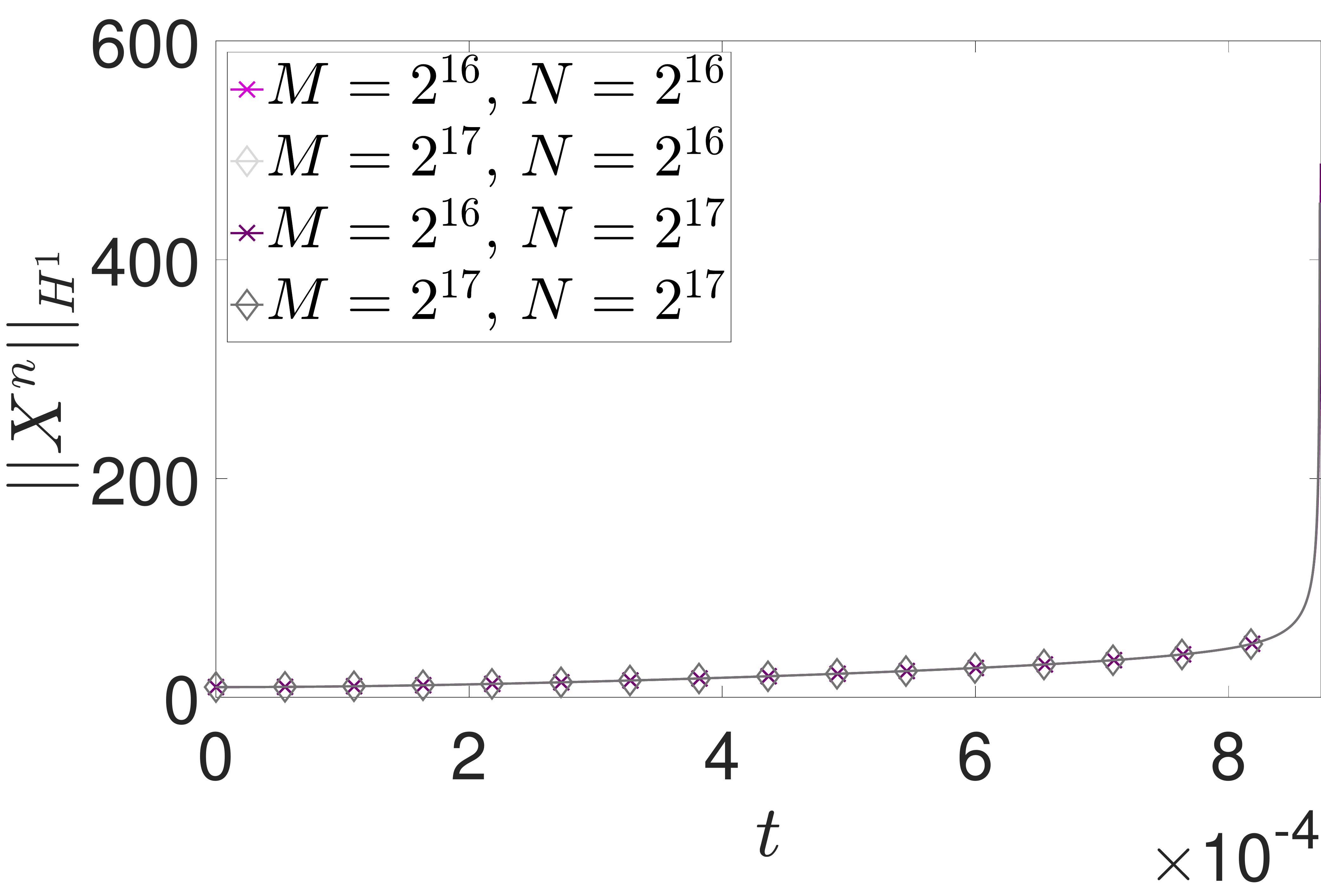}
			\end{subfigure}
			~ 
			\begin{subfigure}[t]{0.35\textwidth}
				\centering
				\includegraphics*[width =\textwidth,keepaspectratio,clip]{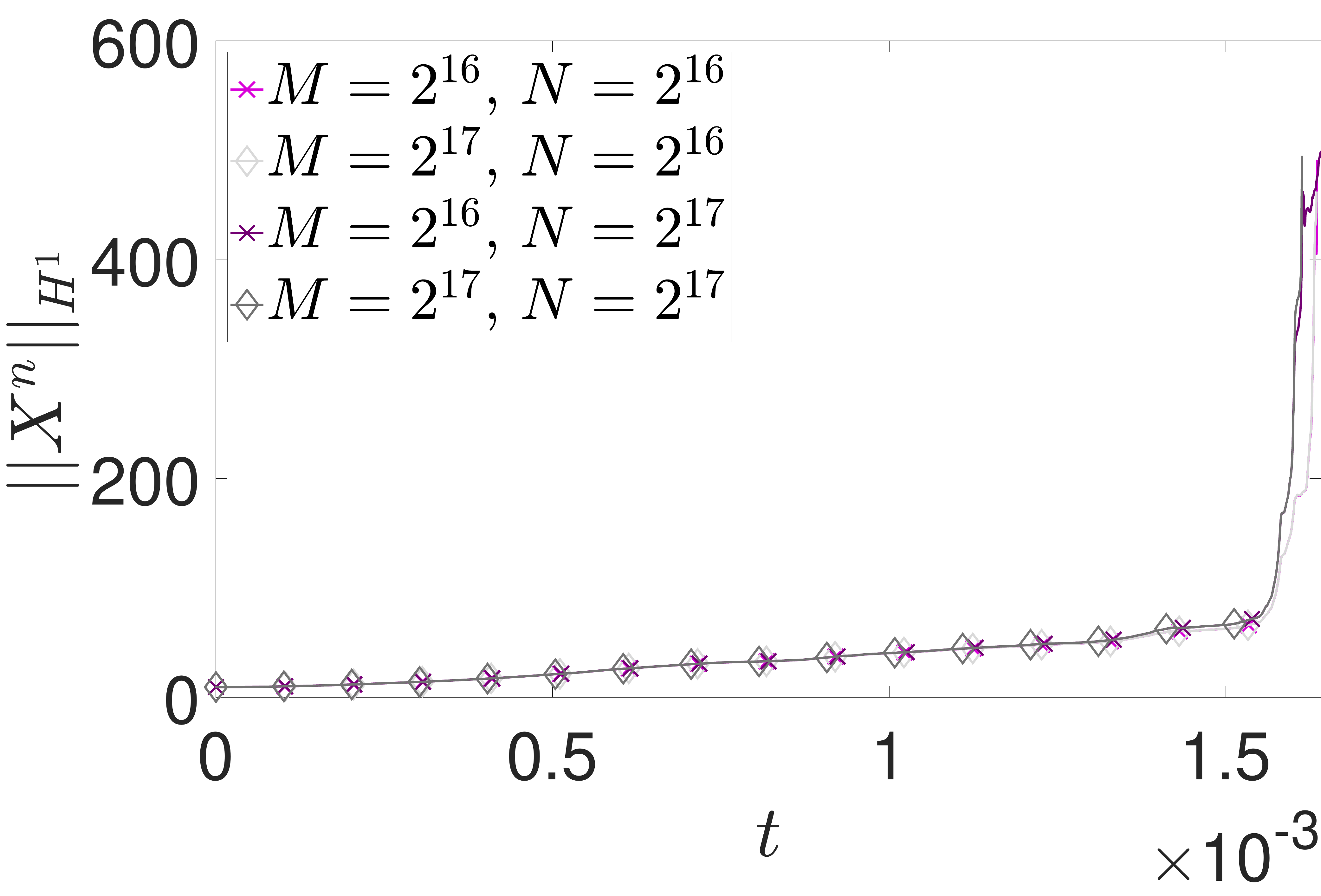}
			\end{subfigure}
		
			\begin{subfigure}[t]{0.35\textwidth}
				\centering
				\includegraphics*[width =\textwidth,keepaspectratio,clip]{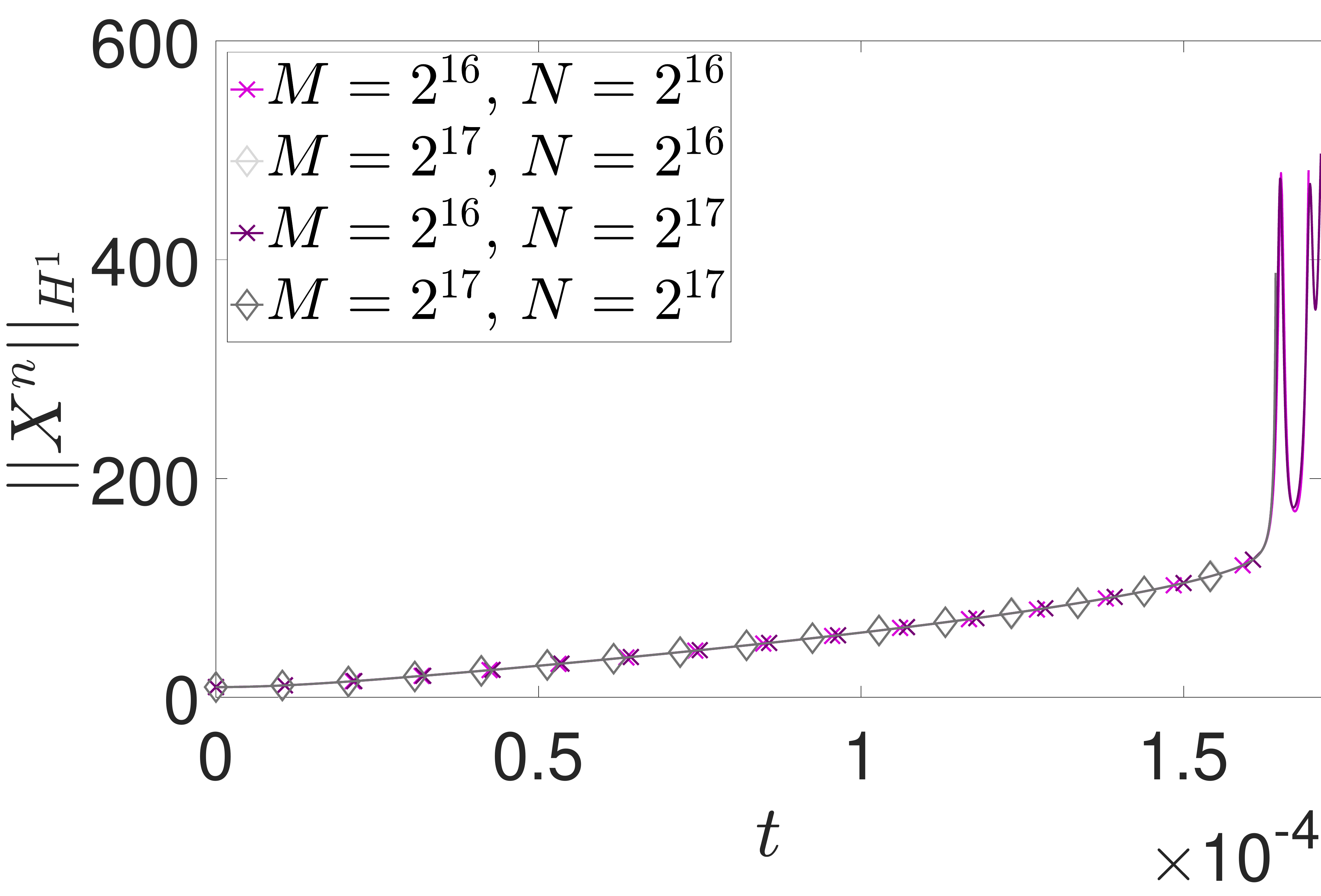}
			\end{subfigure}
			~ 
			\begin{subfigure}[t]{0.35\textwidth}
				\centering
				\includegraphics*[width =\textwidth,keepaspectratio,clip]{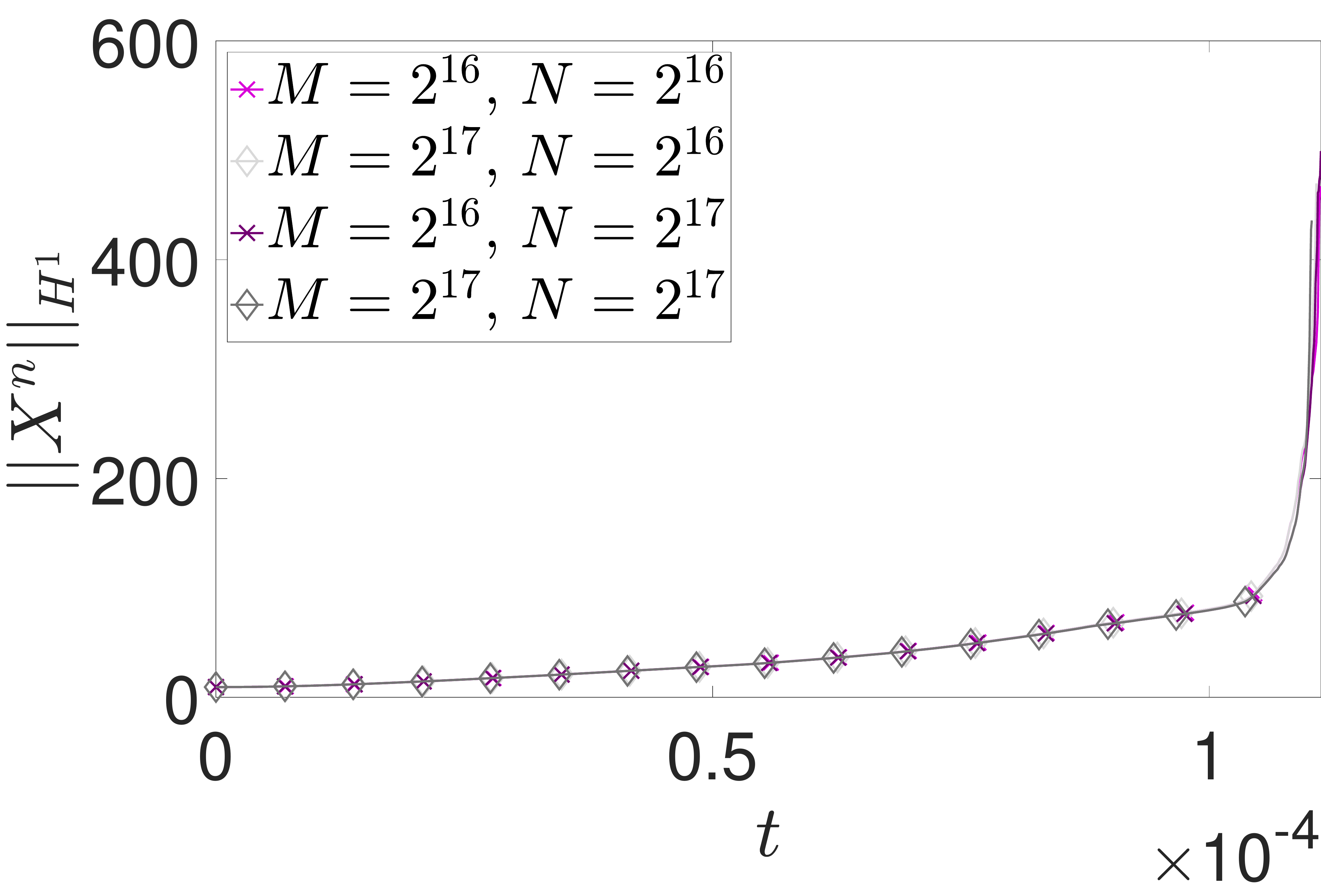}
			\end{subfigure}
			\caption{
				Evolution of $\H^1$-norm of \eqref{eq:ManakovCrit} using the initial value \eqref{IV2},
				$a = 20\pi$,
				$\gamma = 0$ (left) or $\gamma= 1$ (right),
				and $\sigma = 2, 3, 4$ (top, middle, bottom).
				Pink $\times$: $M = 2^{16}$, 
				grey $\diamond$: $M = 2^{17}$.
				Darker lines implies larger $N$.
				\label{fig:H1HigherSigma}
			}
		\end{figure}

		\begin{figure}[h!]
			\centering
			\begin{subfigure}[t]{0.35\textwidth}
				\centering
				\includegraphics*[width =\textwidth,keepaspectratio,clip]{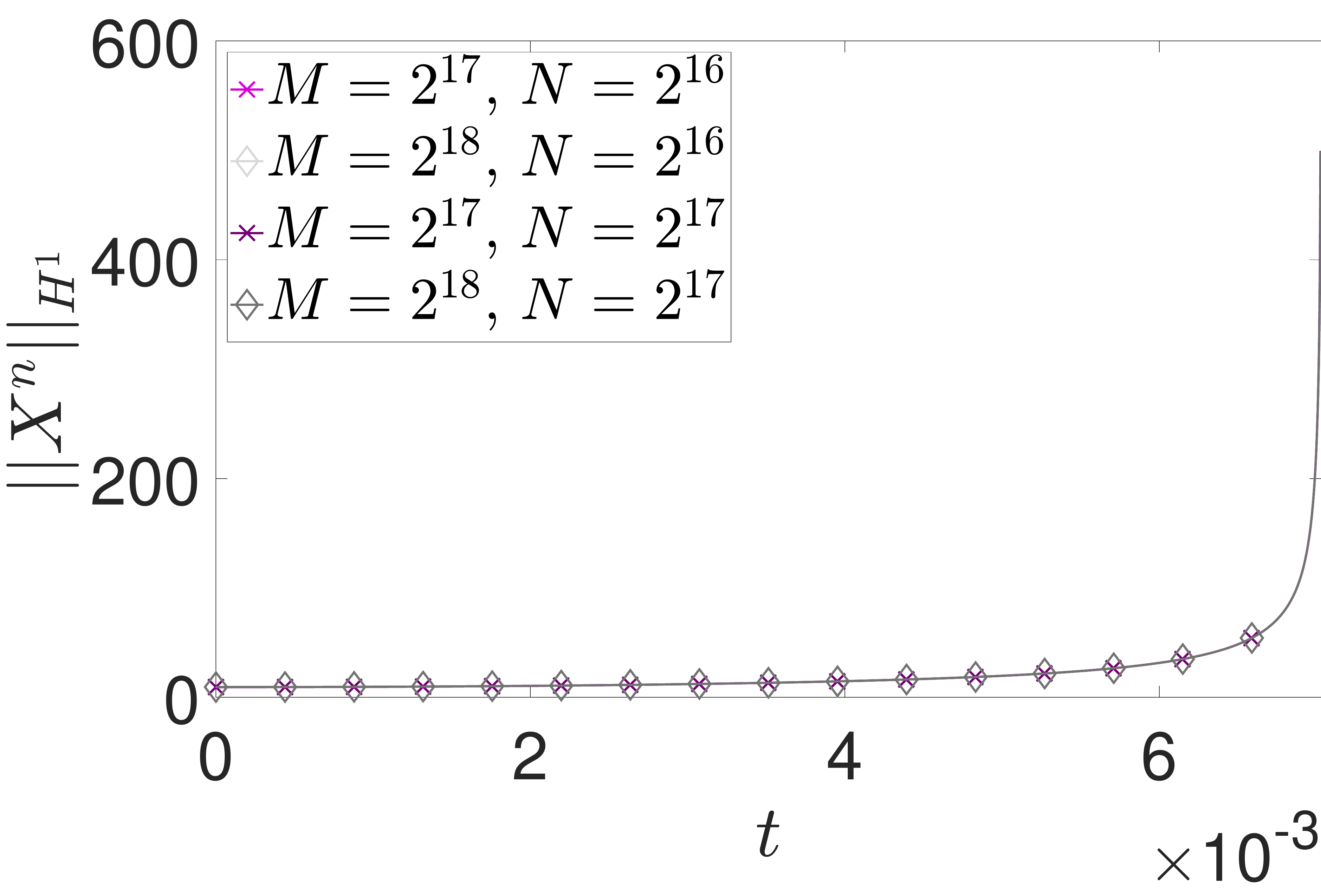}
			\end{subfigure}
			~ 
			\begin{subfigure}[t]{0.35\textwidth}
				\centering
				\includegraphics*[width =\textwidth,keepaspectratio,clip]{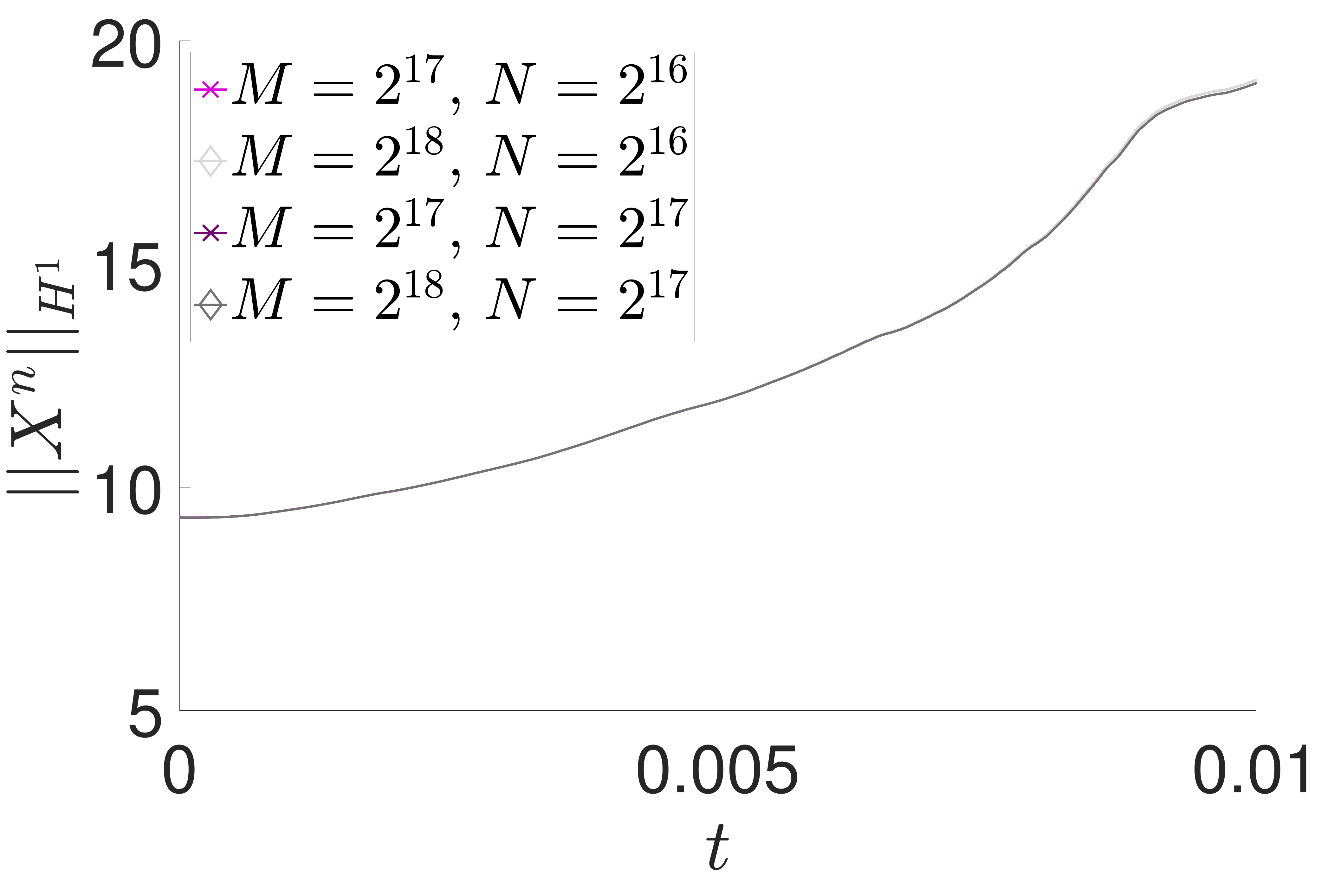}
			\end{subfigure}
			
			\begin{subfigure}[t]{0.35\textwidth}
				\centering
				\includegraphics*[width =\textwidth,keepaspectratio,clip]{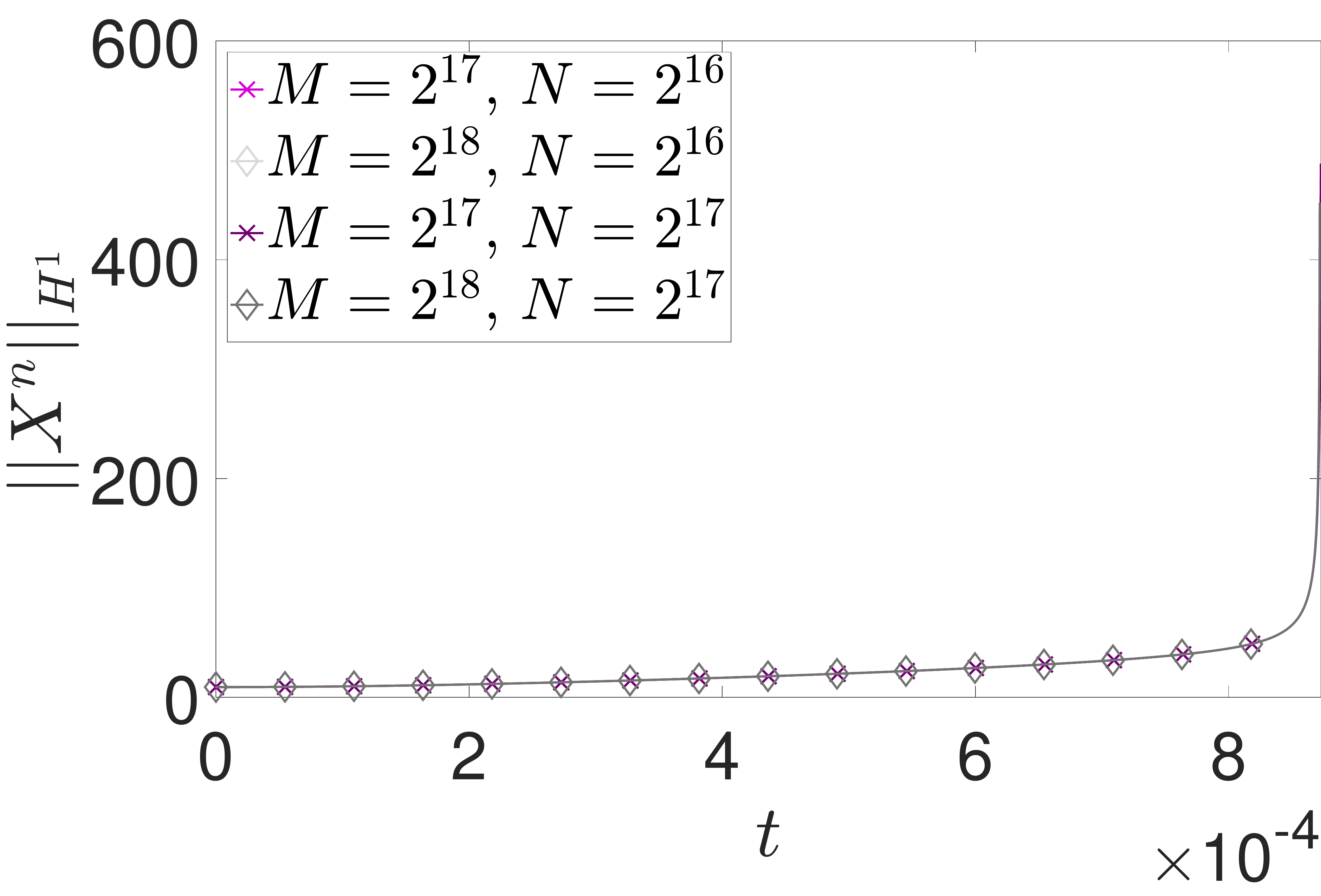}
			\end{subfigure}
			~ 
			\begin{subfigure}[t]{0.35\textwidth}
				\centering
				\includegraphics*[width =\textwidth,keepaspectratio,clip]{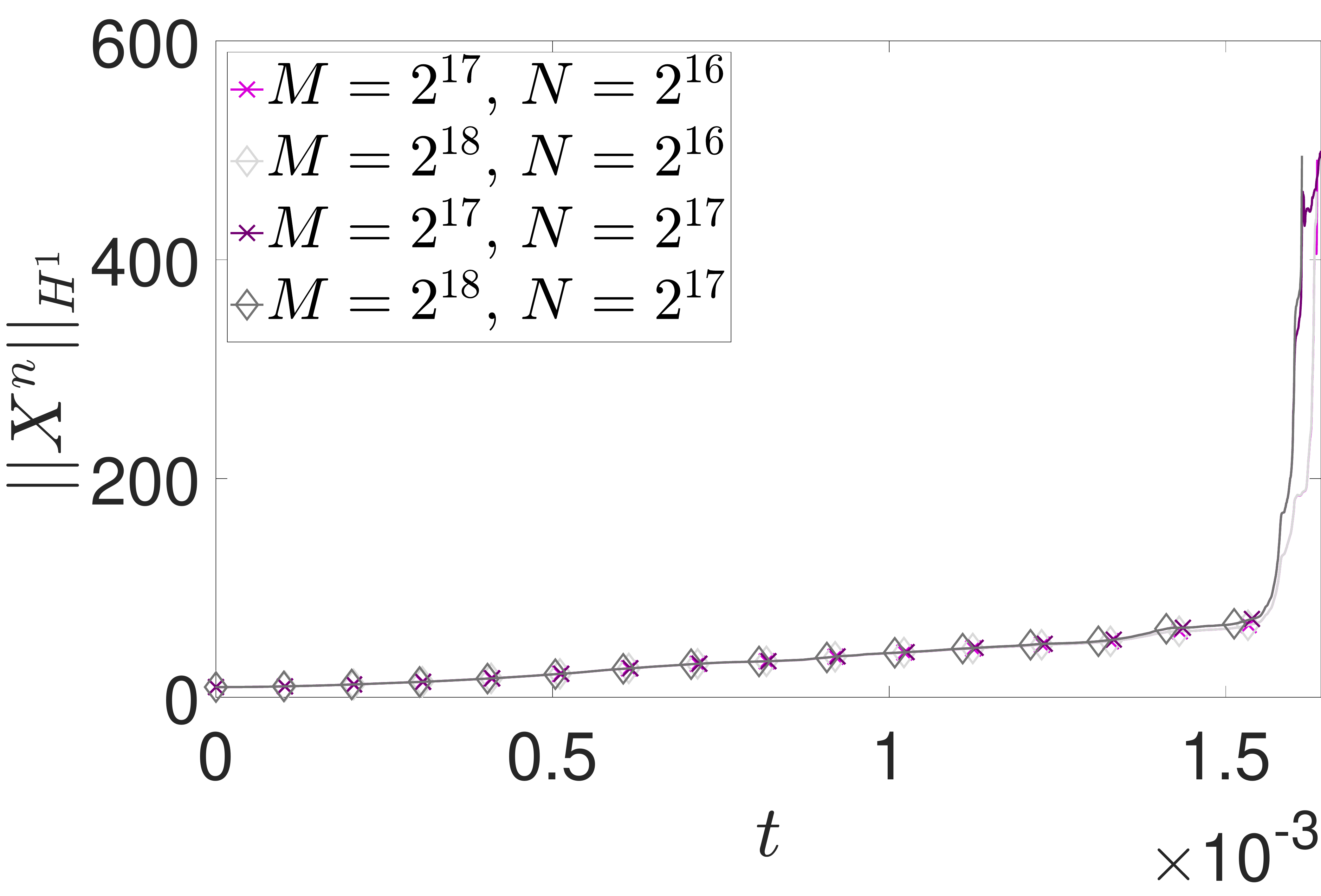}
			\end{subfigure}
			
			\begin{subfigure}[t]{0.35\textwidth}
				\centering
				\includegraphics*[width =\textwidth,keepaspectratio,clip]{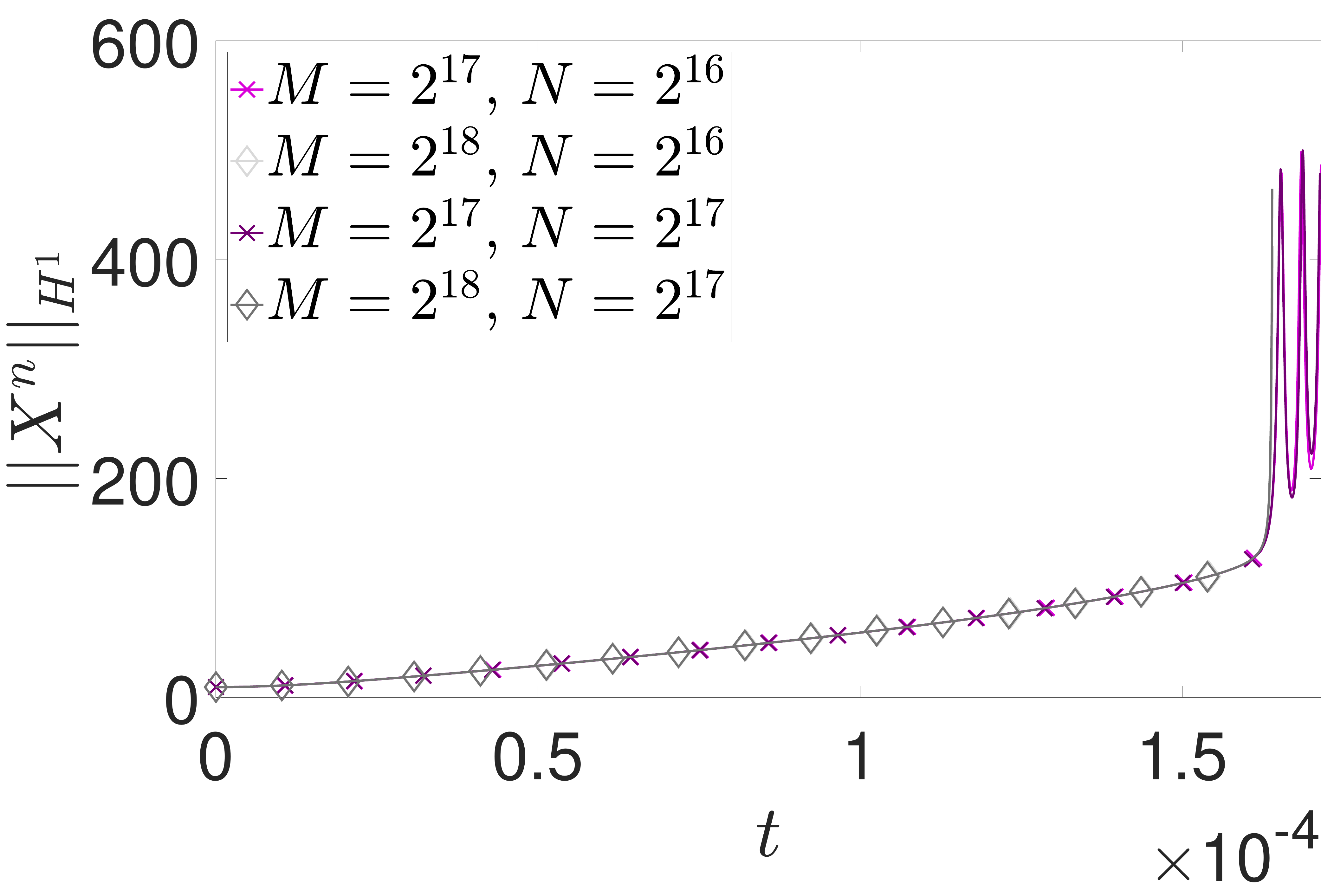}
			\end{subfigure}
			~ 
			\begin{subfigure}[t]{0.35\textwidth}
				\centering
				\includegraphics*[width =\textwidth,keepaspectratio,clip]{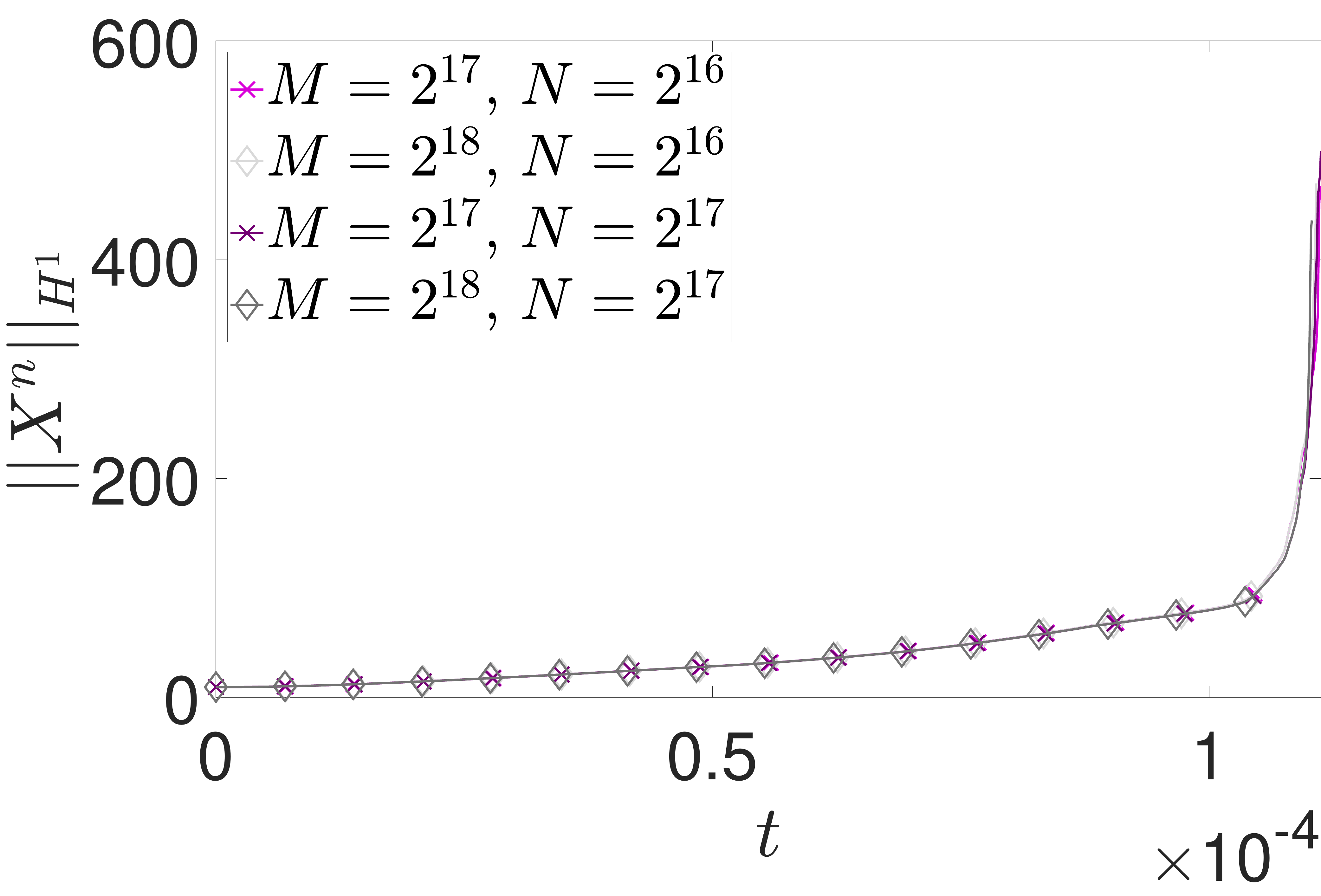}
			\end{subfigure}
			\caption{
				Evolution of $\H^1$-norm of \eqref{eq:ManakovCrit} using the initial value \eqref{IV2},
				$a = 40\pi$,
				$\gamma = 0$ (left) or $\gamma= 1$ (right),
				and $\sigma = 2, 3, 4$ (top, middle, bottom).
				Pink $\times$: $M = 2^{17}$, 
				grey $\diamond$: $M = 2^{18}$.
				Darker lines implies larger $N$.
				\label{fig:H1HigherSigma2L}
			}
		\end{figure}
	
		Having confirmed that blowup may occur, and that the discretization parameters of the previous experiment is sufficient to observe it, we now simulate $48$ samples with $\sigma=1$, $1.25$, $1.5$, $1.75$, $2.$, $2.25$, $2.5$, $2.9$, $3$, $3.5$, $4$ and the parameters
		$T = 0.01$, $N = 2^{17}$, $h = T/N$, $a = 20\pi$, and $M = 2^{17}$ Fourier modes.
		As in the two previous experiments we terminate calculations if $\norm{X^n}_{\H^1} > 500$.
		The results are presented in Figure~\ref{fig:H1new} and Figure~\ref{fig:H1HigherSigmaBatch}. 
		We observe that a rapid increase in the $\H^1$-norm is present for most samples. For $\sigma=2.5$, we suspect that the spatial discretisation may perhaps not be fine enough to reach blowup. 
		We also observe that, as expected, blowup for each sample is reached earlier for larger $\sigma$.

				\begin{figure}[h!]
			\centering
			\begin{subfigure}[t]{0.3\textwidth}
				\centering
				\includegraphics*[width =\textwidth,keepaspectratio,clip]{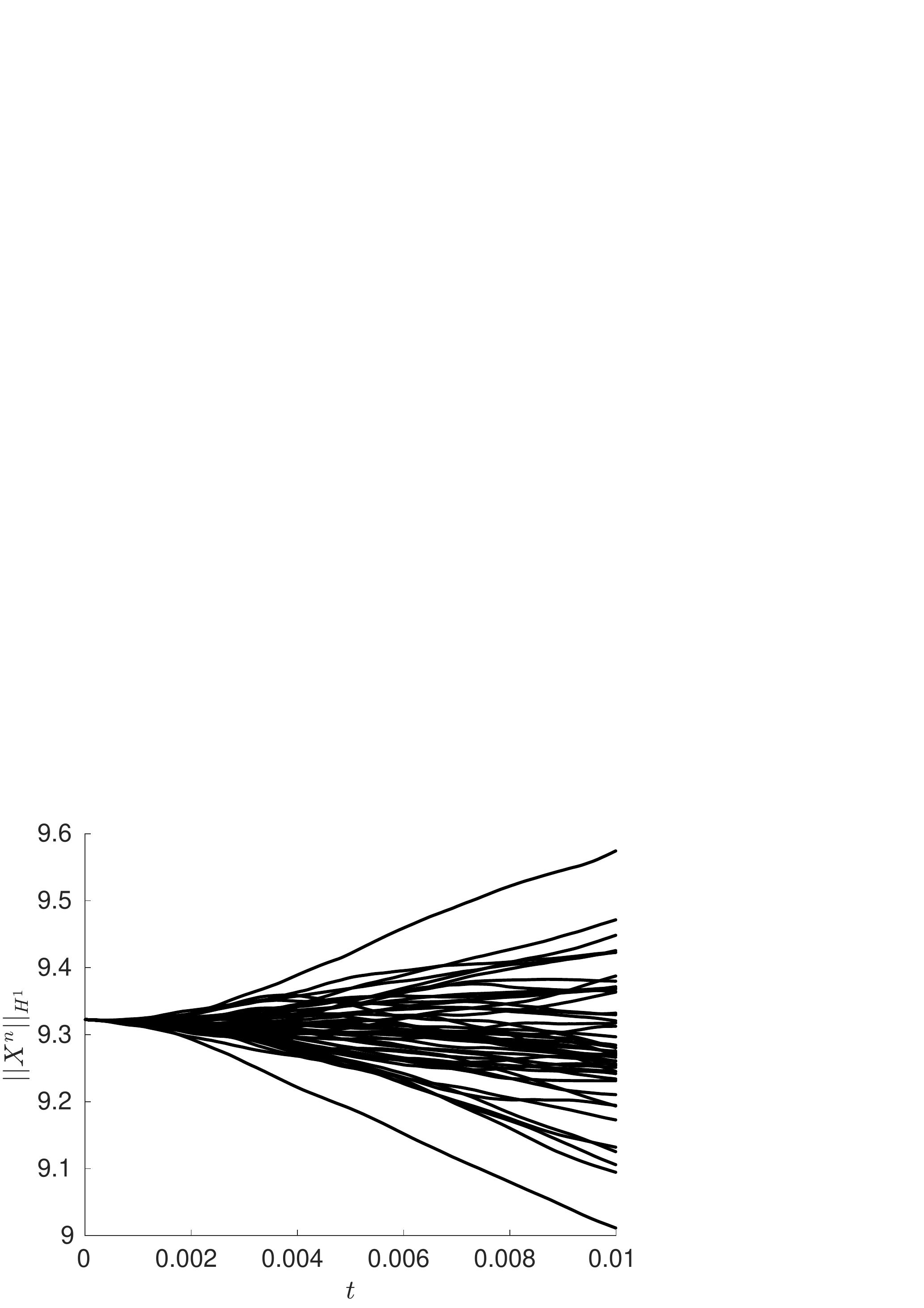}
			\end{subfigure}
			~ 
			\begin{subfigure}[t]{0.3\textwidth}
				\centering
				\includegraphics*[width =\textwidth,keepaspectratio,clip]{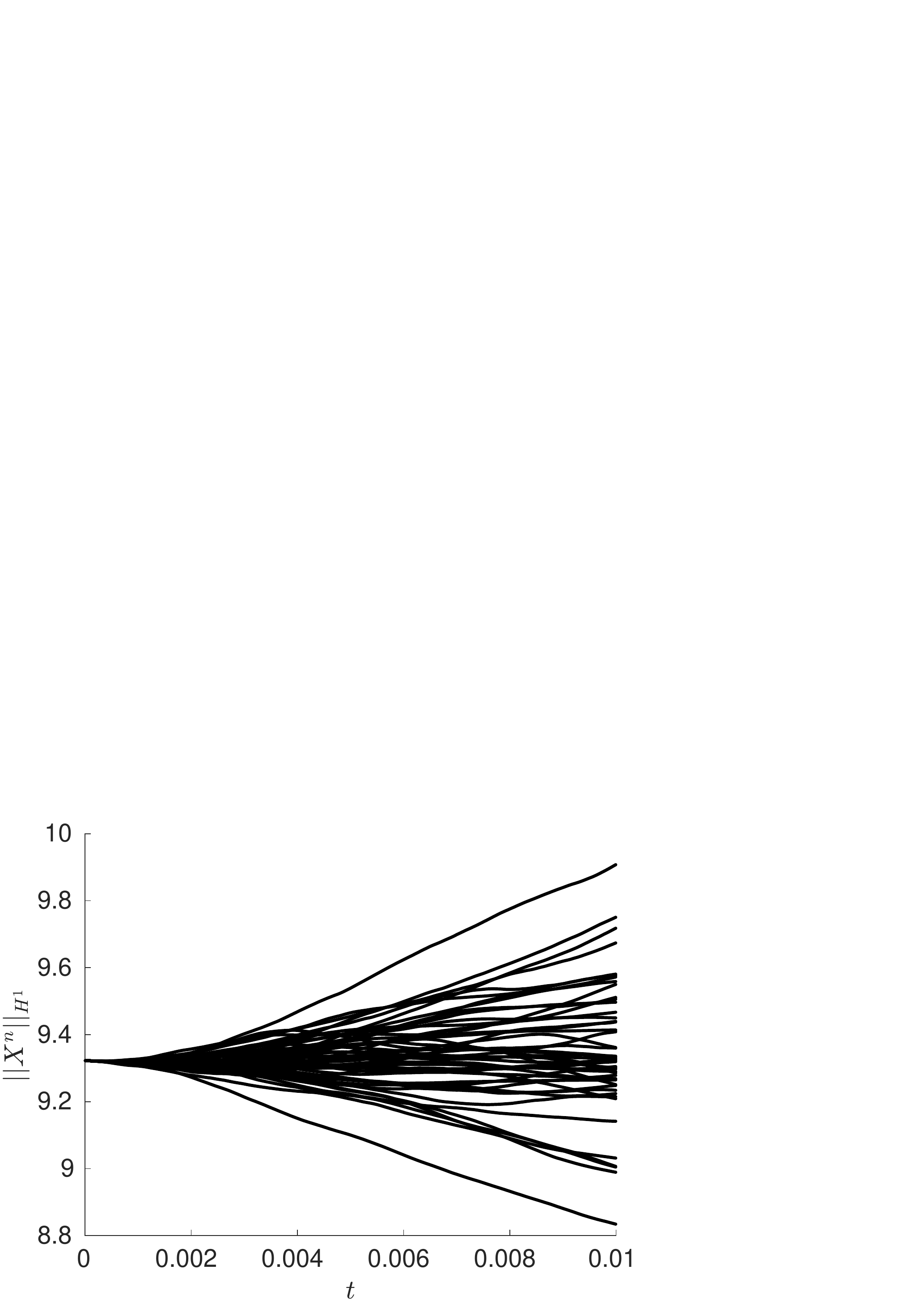}
			\end{subfigure}
			
			\begin{subfigure}[t]{0.3\textwidth}
				\centering
				\includegraphics*[width =\textwidth,keepaspectratio,clip]{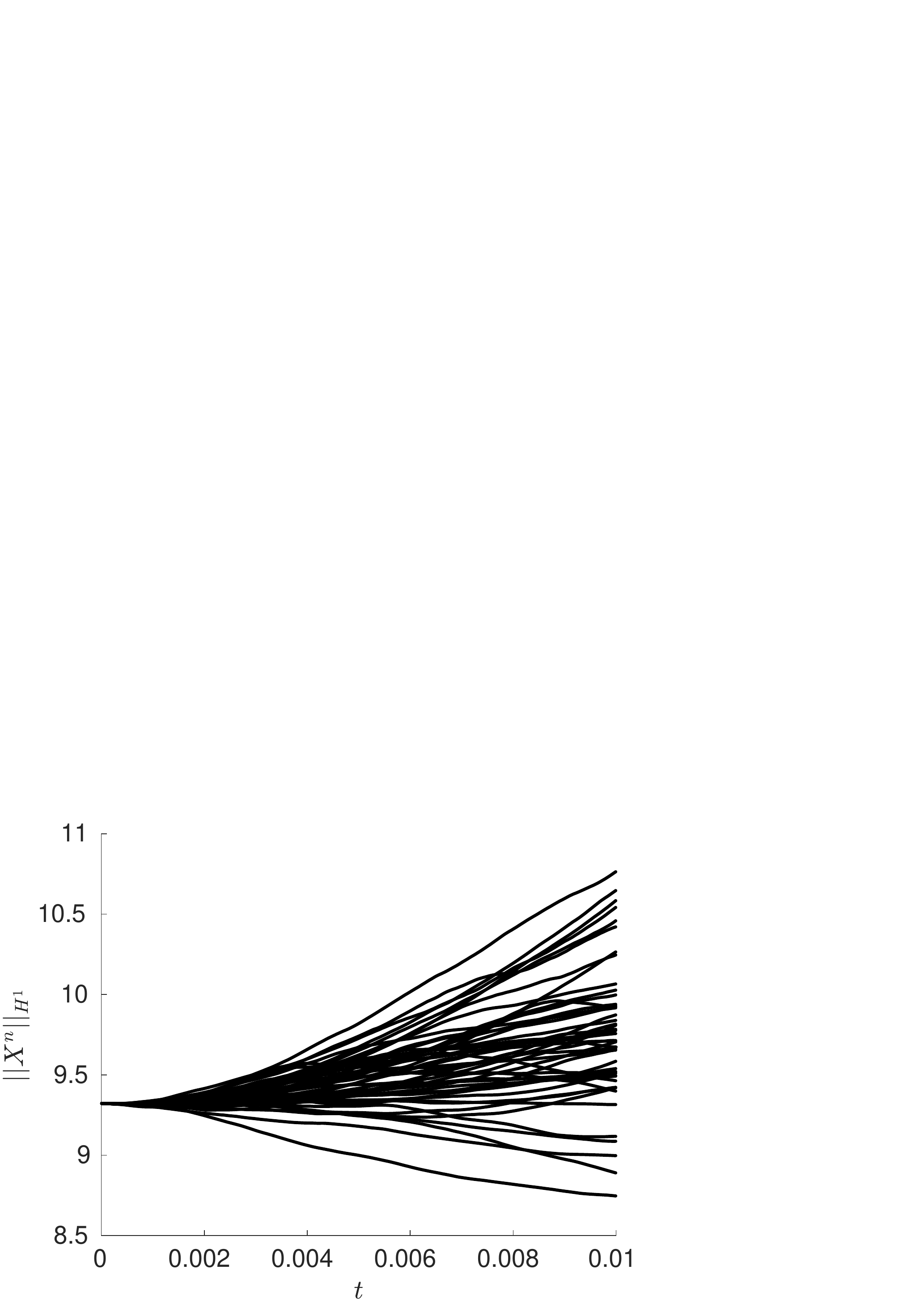}
			\end{subfigure}
			~ 
			\begin{subfigure}[t]{0.3\textwidth}
				\centering
				\includegraphics*[width =\textwidth,keepaspectratio,clip]{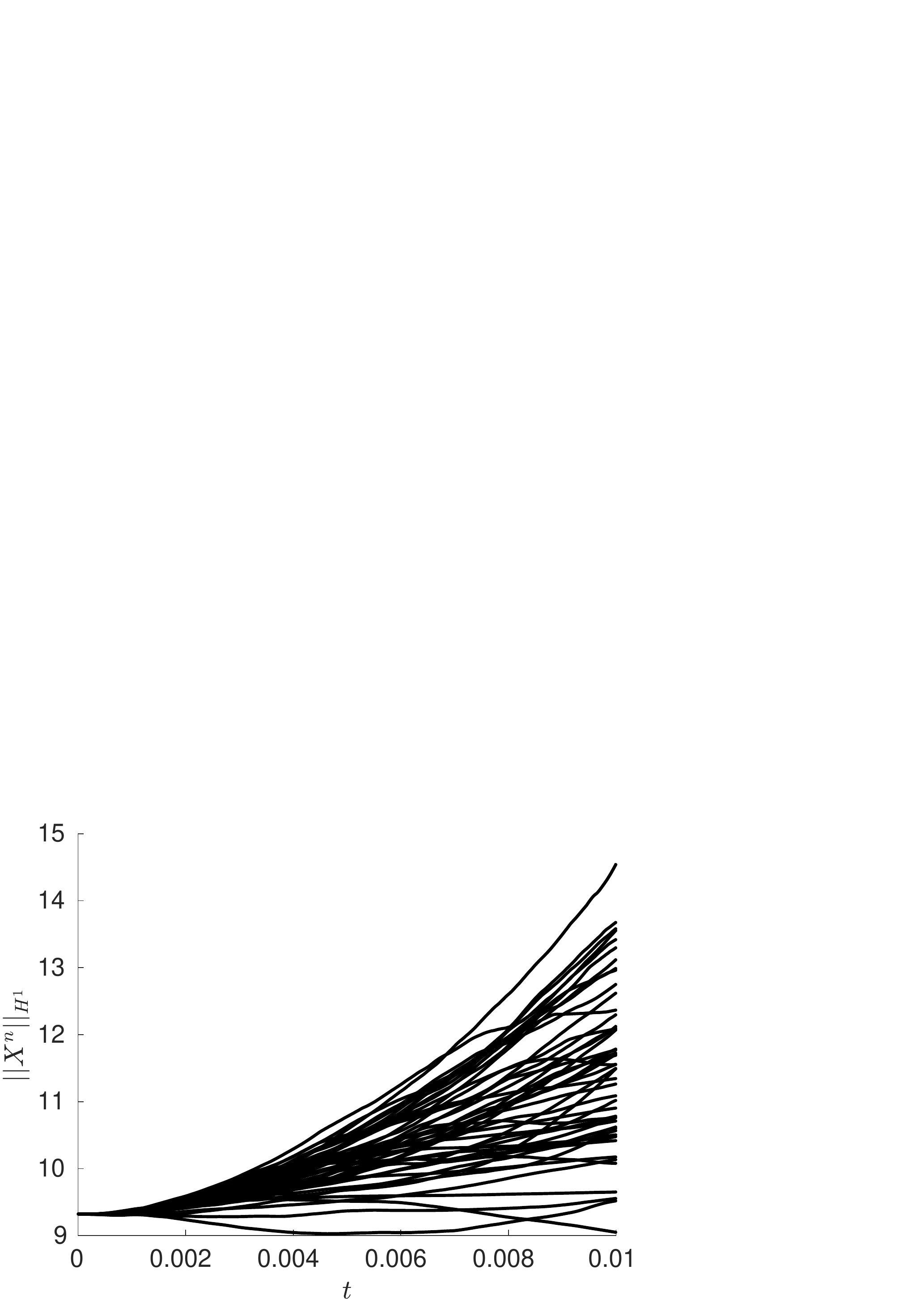}
			\end{subfigure}
			
			\begin{subfigure}[t]{0.3\textwidth}
				\centering
				\includegraphics*[width =\textwidth,keepaspectratio,clip]{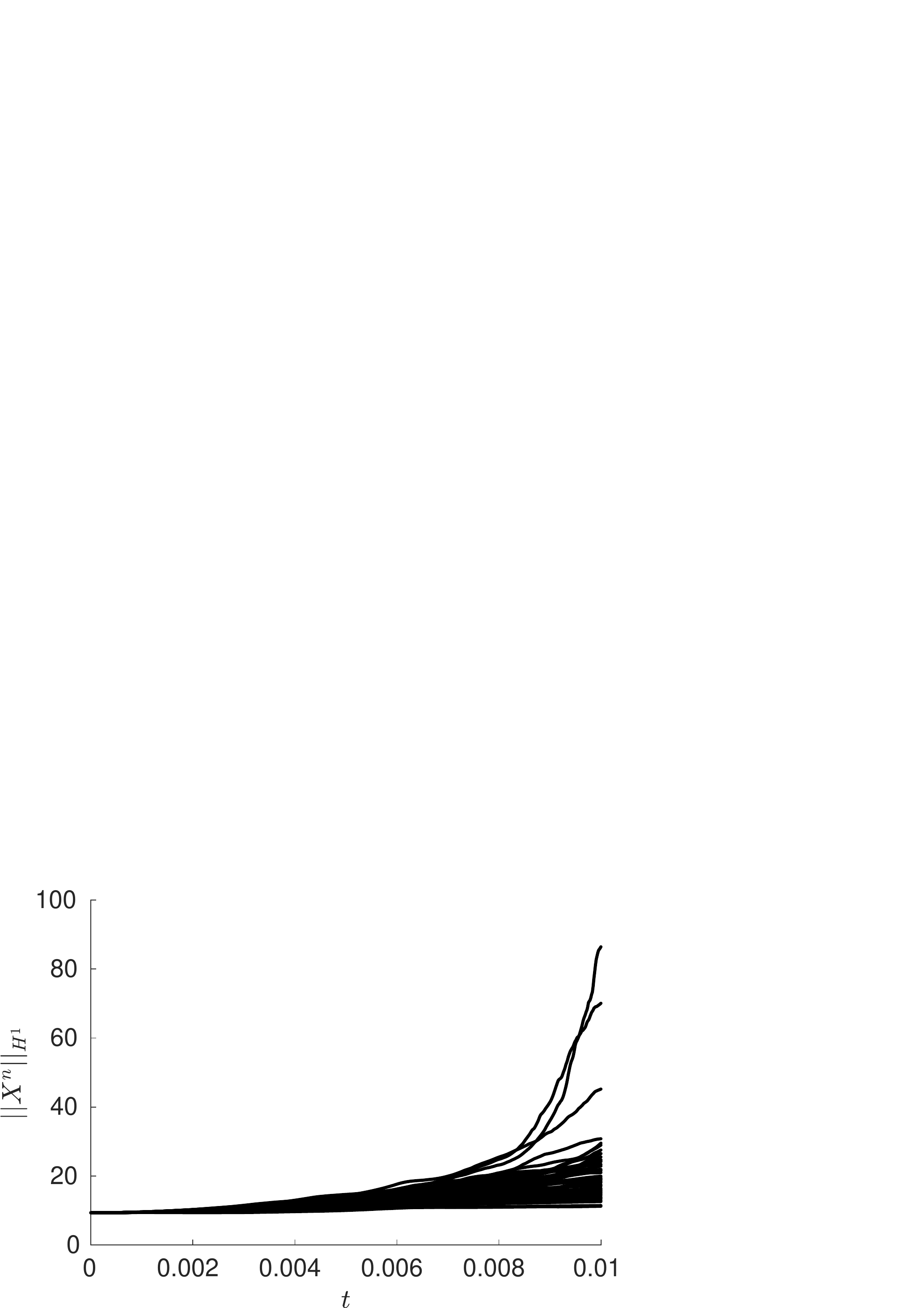}
			\end{subfigure}
			~ 
			\begin{subfigure}[t]{0.3\textwidth}
				\centering
				\includegraphics*[width =\textwidth,keepaspectratio,clip]{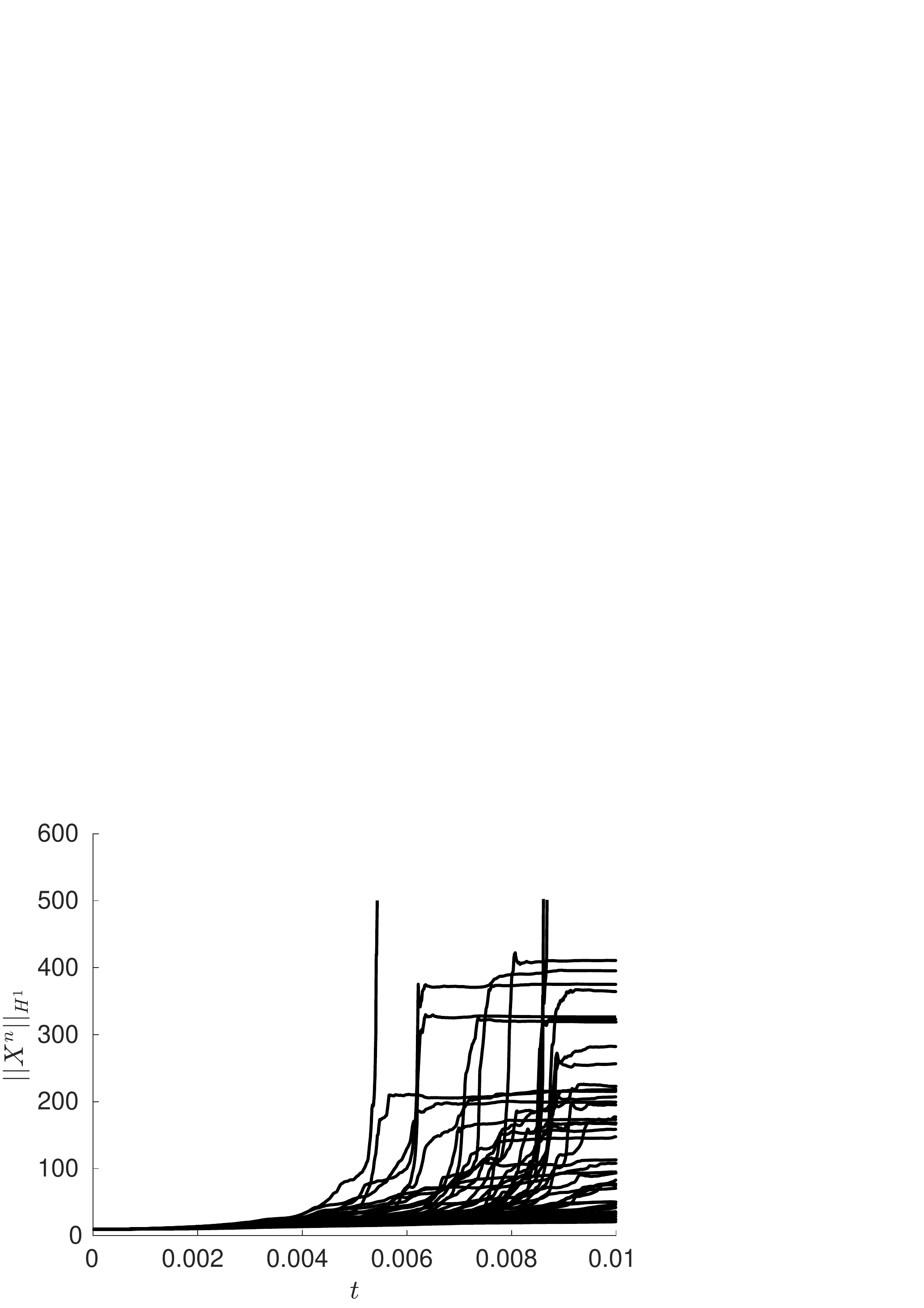}
			\end{subfigure}
			\caption{
				Evolution of $\H^1$-norms of \eqref{eq:ManakovCrit} with $\sigma = 1, 1.25, 1.5, 1.75, 2., 2.25$ (in order left to right, top to bottom) using the initial value \eqref{IV2}.
				\label{fig:H1new}
			}
		\end{figure}

				\begin{figure}[h!]
			\centering
			\begin{subfigure}[t]{0.35\textwidth}
				\centering
				\includegraphics*[width =\textwidth,keepaspectratio,clip]{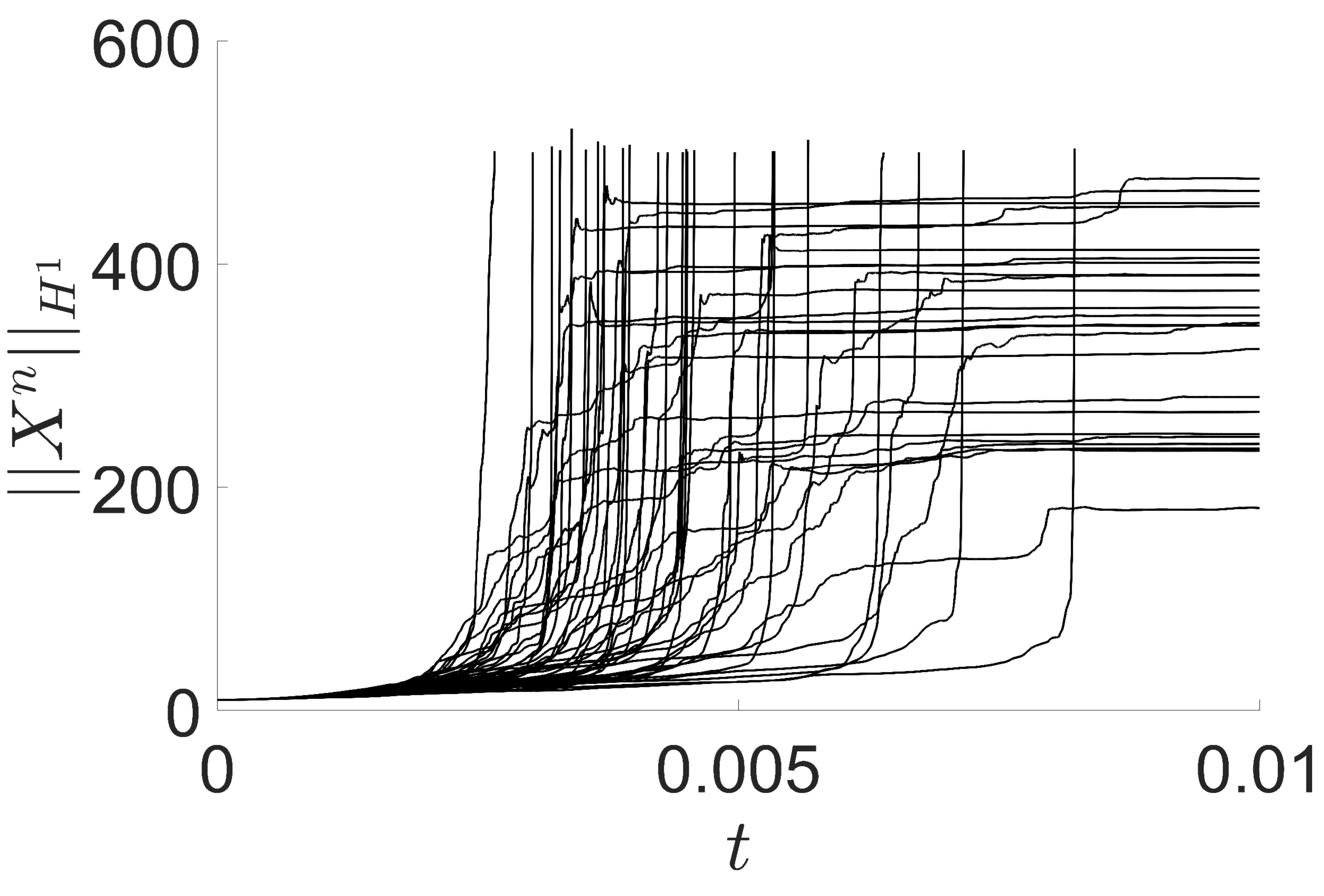}
			\end{subfigure}
			~ 
			\begin{subfigure}[t]{0.35\textwidth}
				\centering
				\includegraphics*[width =\textwidth,keepaspectratio,clip]{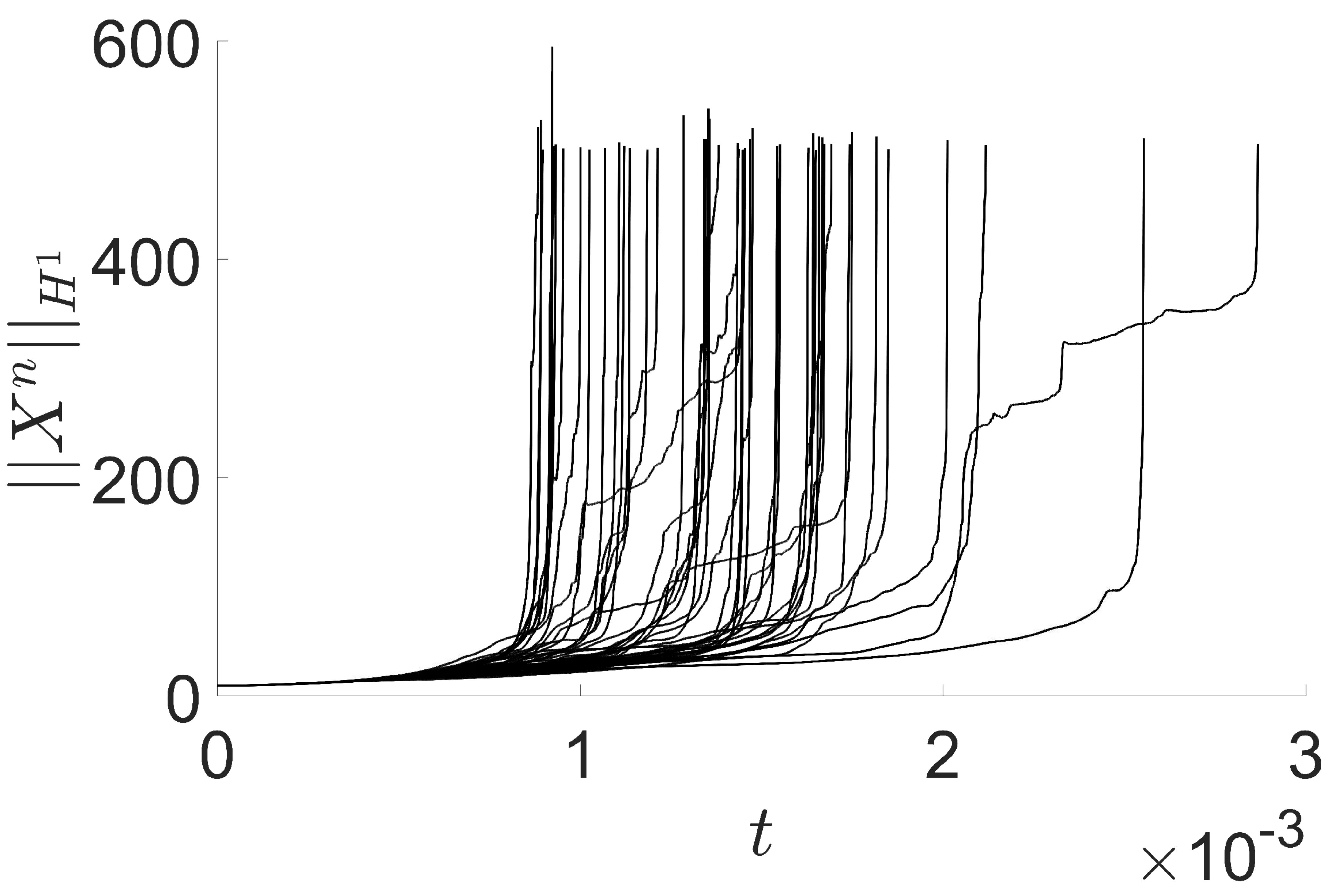}
			\end{subfigure}
			
			\begin{subfigure}[t]{0.35\textwidth}
				\centering
				\includegraphics*[width =\textwidth,keepaspectratio,clip]{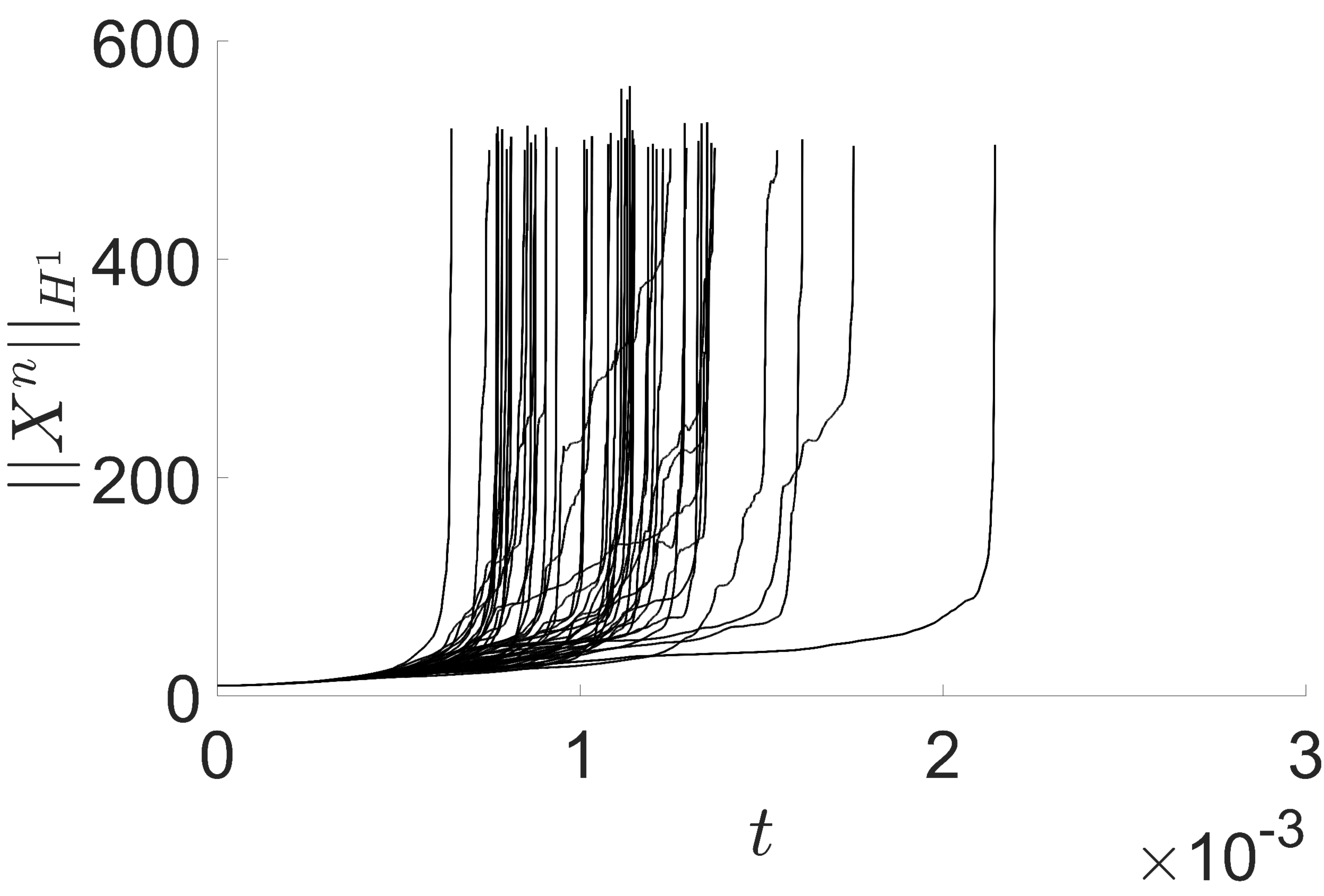}
			\end{subfigure}
			~ 
			\begin{subfigure}[t]{0.35\textwidth}
				\centering
				\includegraphics*[width =\textwidth,keepaspectratio,clip]{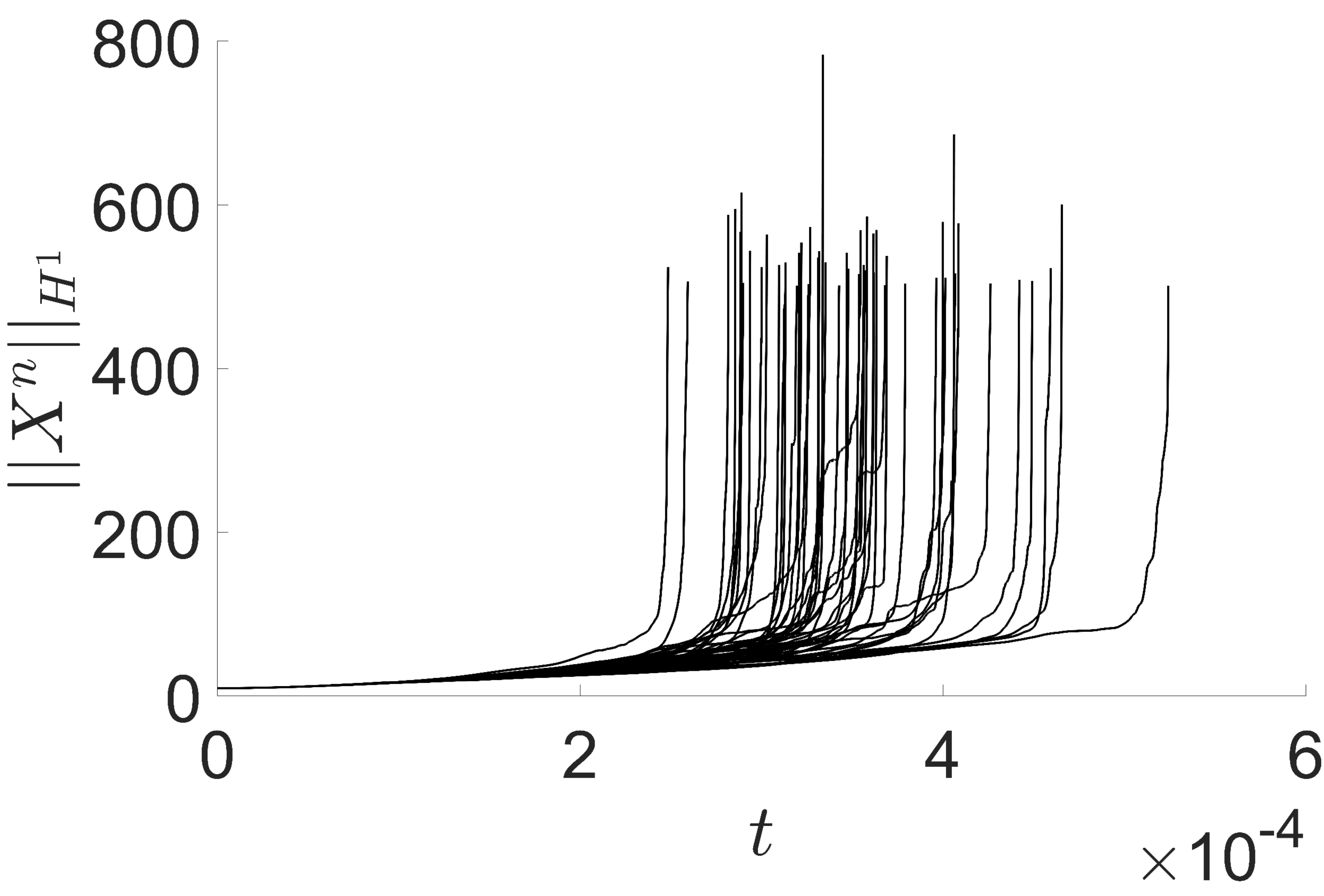}
			\end{subfigure}
			
			\begin{subfigure}[t]{0.35\textwidth}
				\centering
				\includegraphics*[width =\textwidth,keepaspectratio,clip]{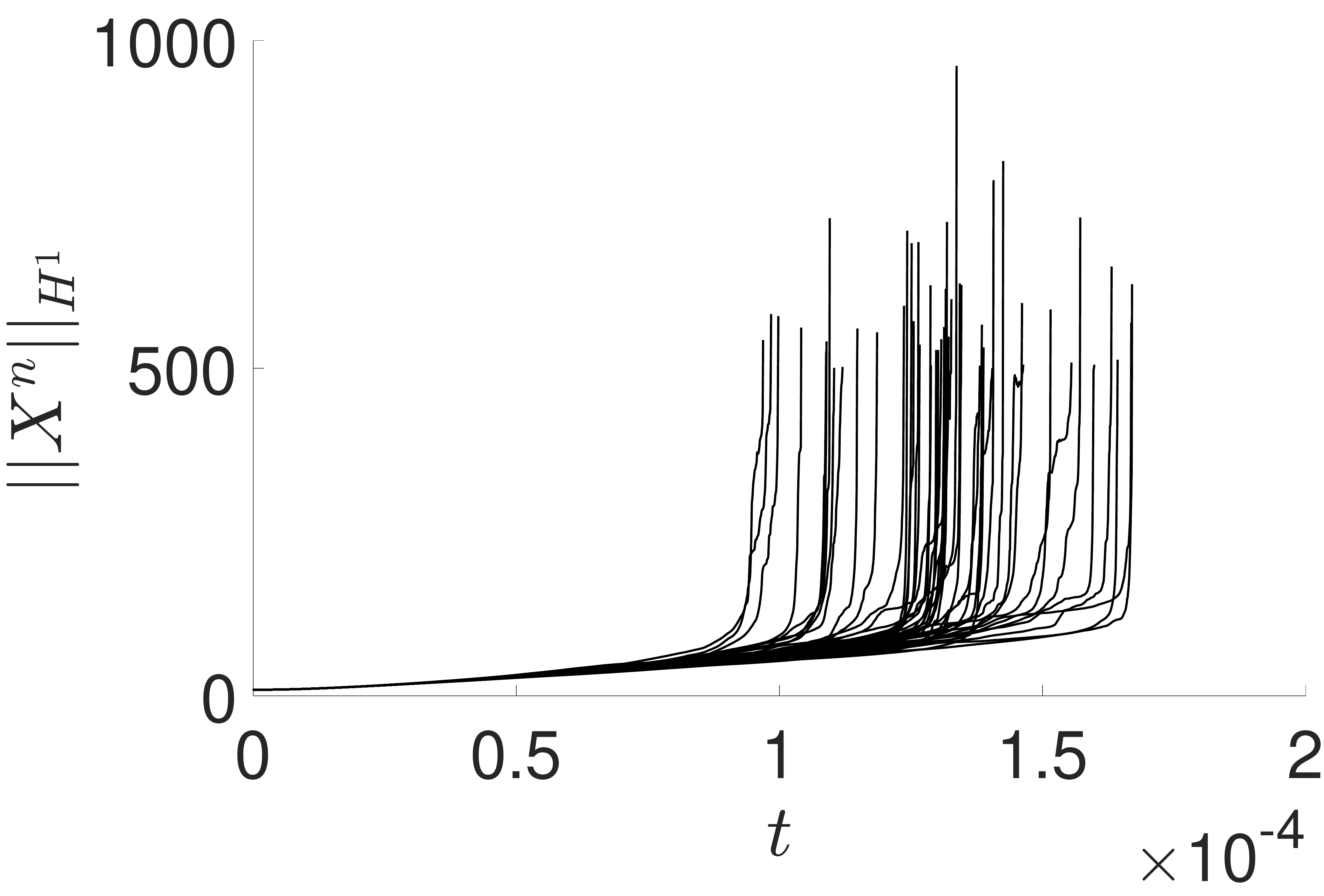}
			\end{subfigure}
			\caption{
				Evolution of $\H^1$-norms of \eqref{eq:ManakovCrit} with $\sigma = 2.5, 2.9, 3, 3.5, 4$ (in order left to right, top to bottom) using the initial value \eqref{IV2}.
				\label{fig:H1HigherSigmaBatch}
			}
		\end{figure}
		
		The results for the deterministic problem ($\gamma=0$) are presented in Figure~\ref{fig:detH1new} (with a focus on values 
		of $\sigma$ between $1$ and $2.25$).

            \begin{figure}[h!]
			\centering
			\begin{subfigure}[t]{0.3\textwidth}
				\centering
				\includegraphics*[width =\textwidth,keepaspectratio,clip]{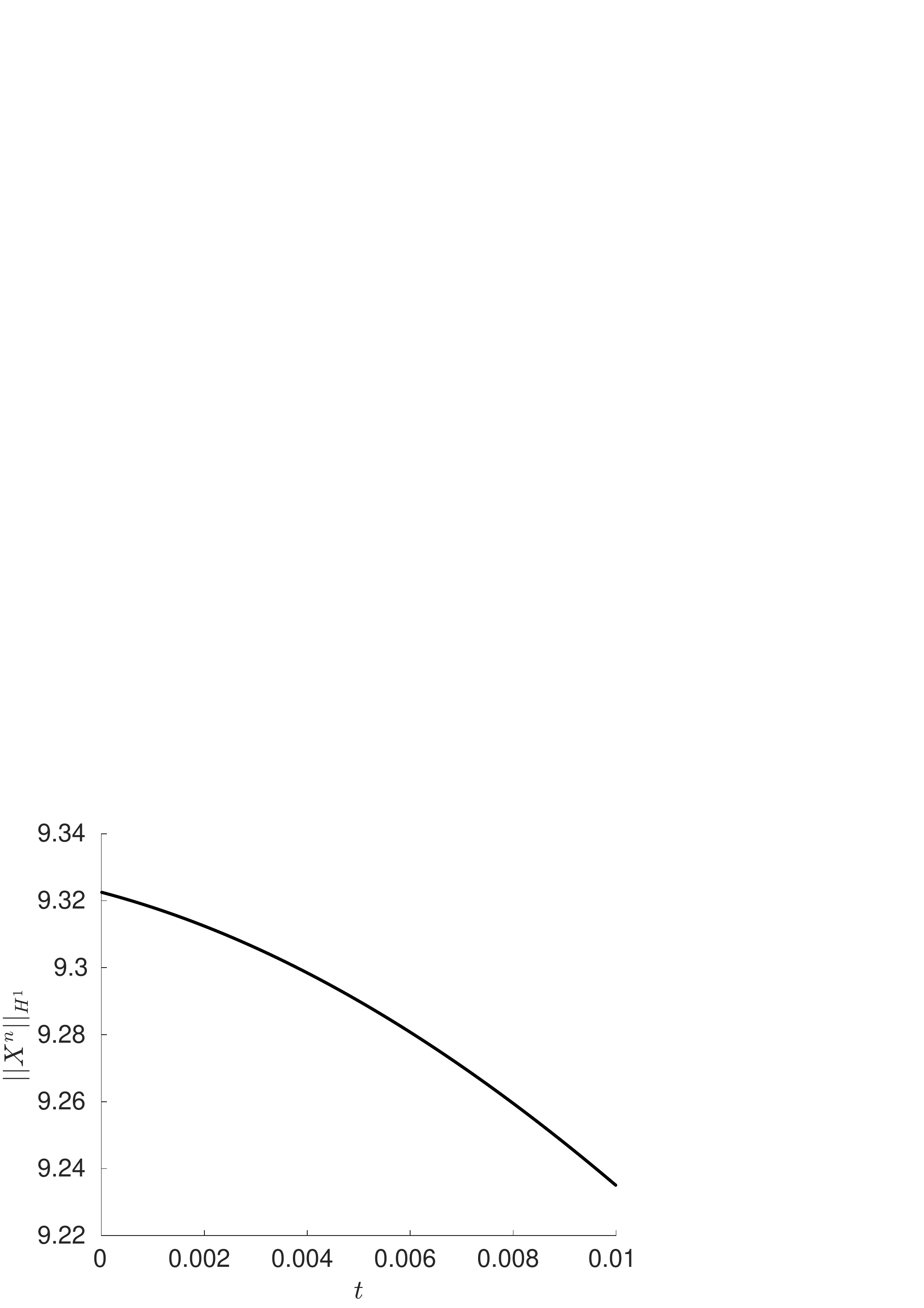}
			\end{subfigure}
			~ 
			\begin{subfigure}[t]{0.3\textwidth}
				\centering
				\includegraphics*[width =\textwidth,keepaspectratio,clip]{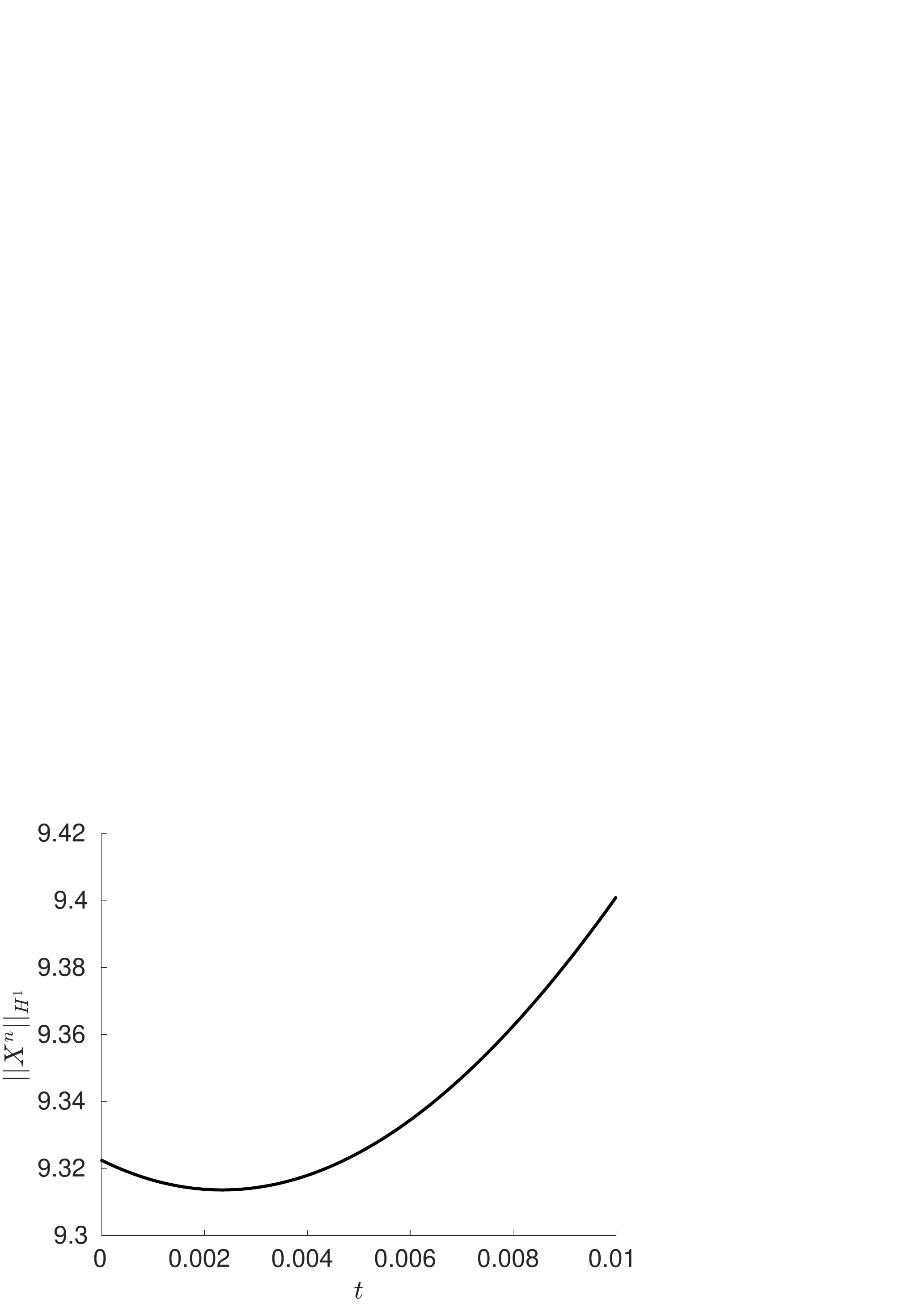}
			\end{subfigure}
			
			\begin{subfigure}[t]{0.3\textwidth}
				\centering
				\includegraphics*[width =\textwidth,keepaspectratio,clip]{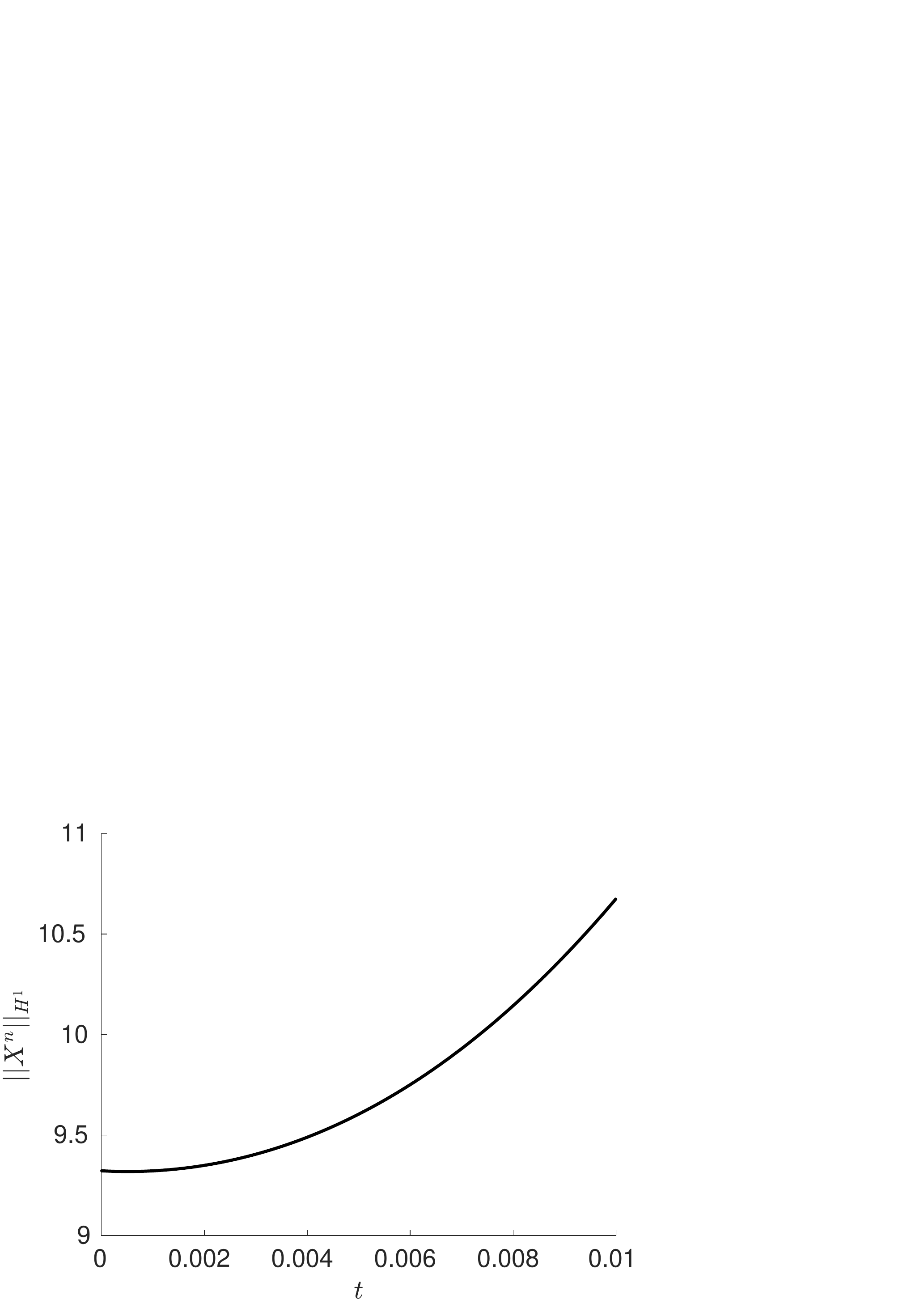}
			\end{subfigure}
			~ 
			\begin{subfigure}[t]{0.3\textwidth}
				\centering
				\includegraphics*[width =\textwidth,keepaspectratio,clip]{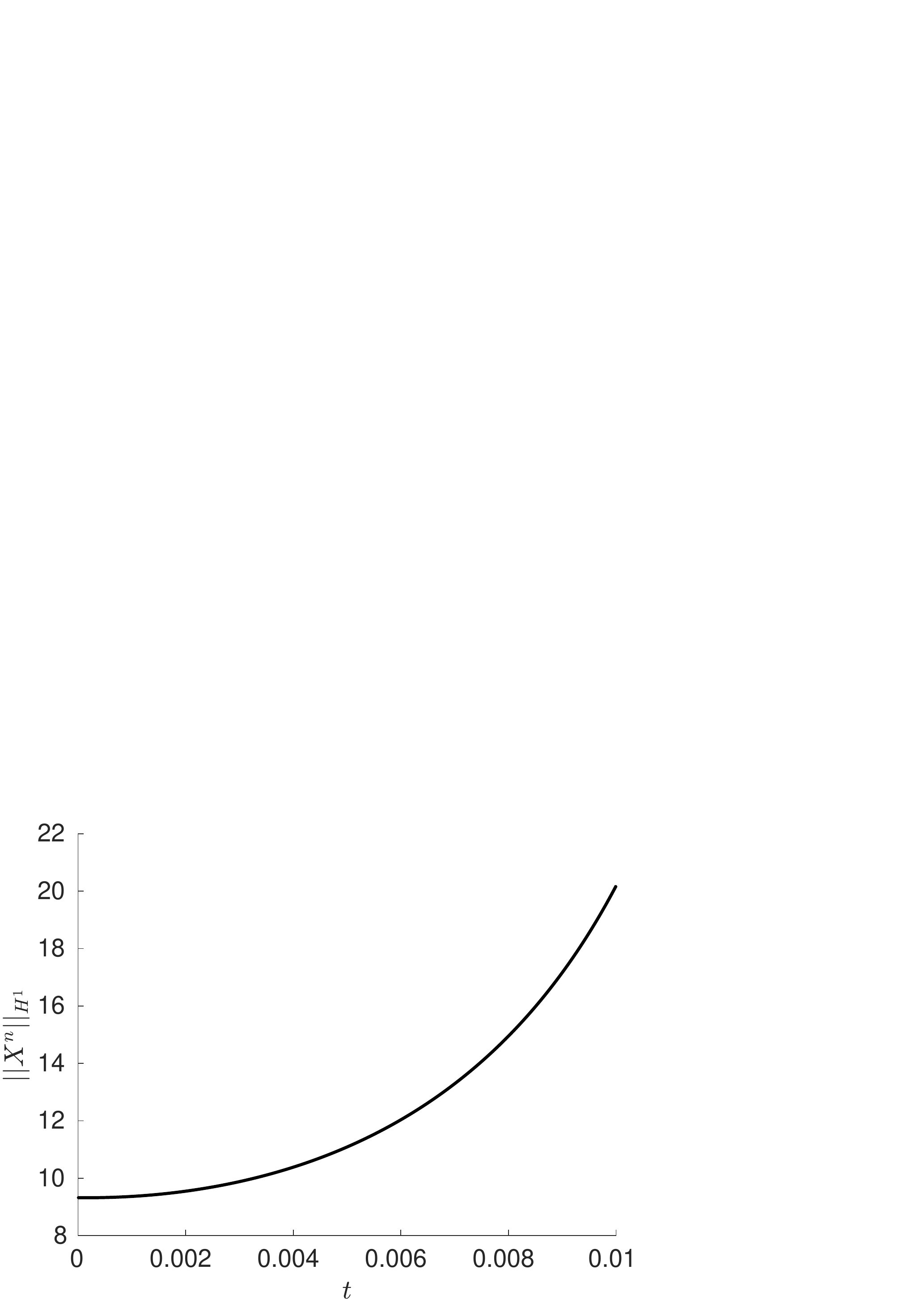}
			\end{subfigure}
			
			\begin{subfigure}[t]{0.3\textwidth}
				\centering
				\includegraphics*[width =\textwidth,keepaspectratio,clip]{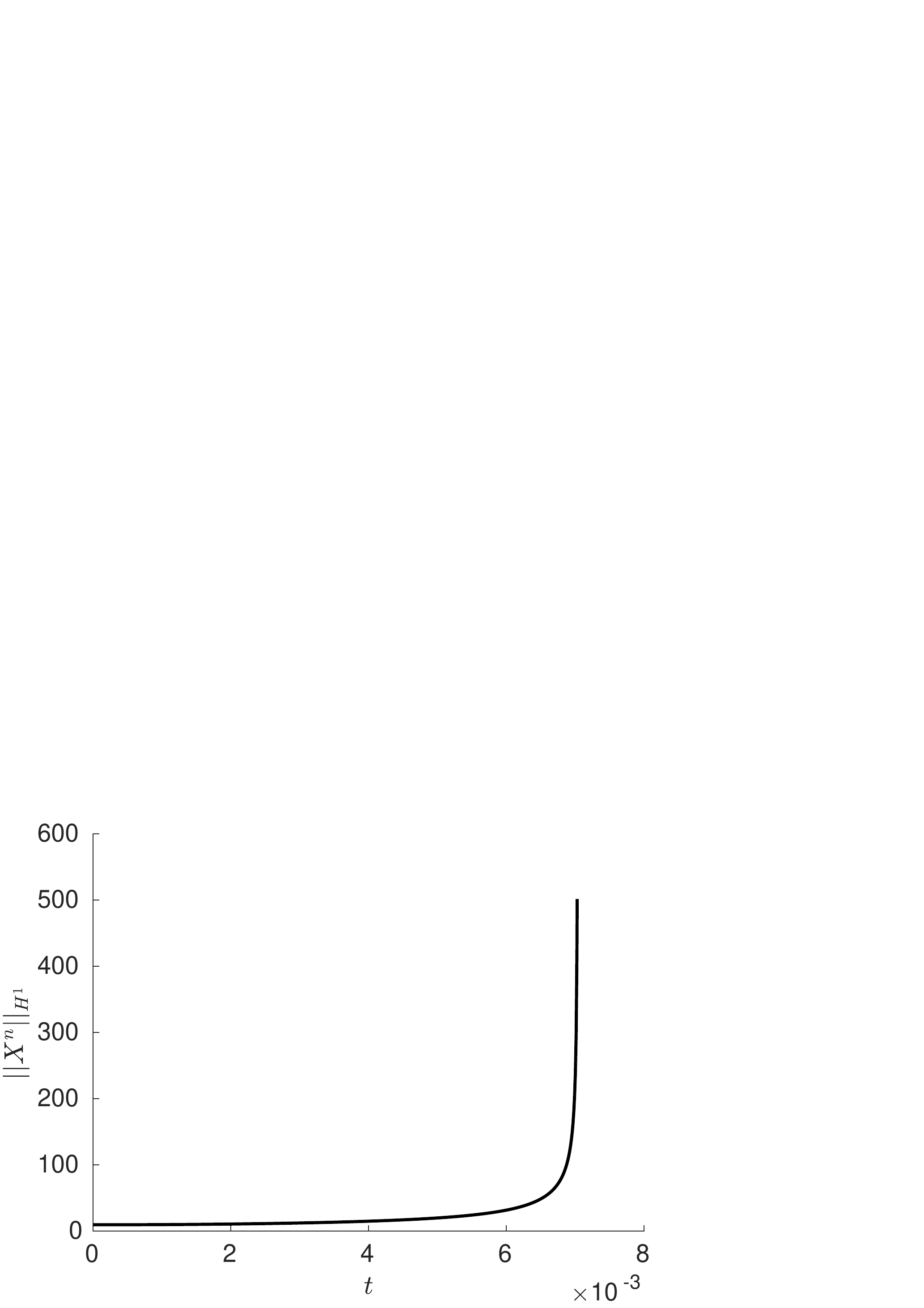}
			\end{subfigure}
			~ 
			\begin{subfigure}[t]{0.3\textwidth}
				\centering
				\includegraphics*[width =\textwidth,keepaspectratio,clip]{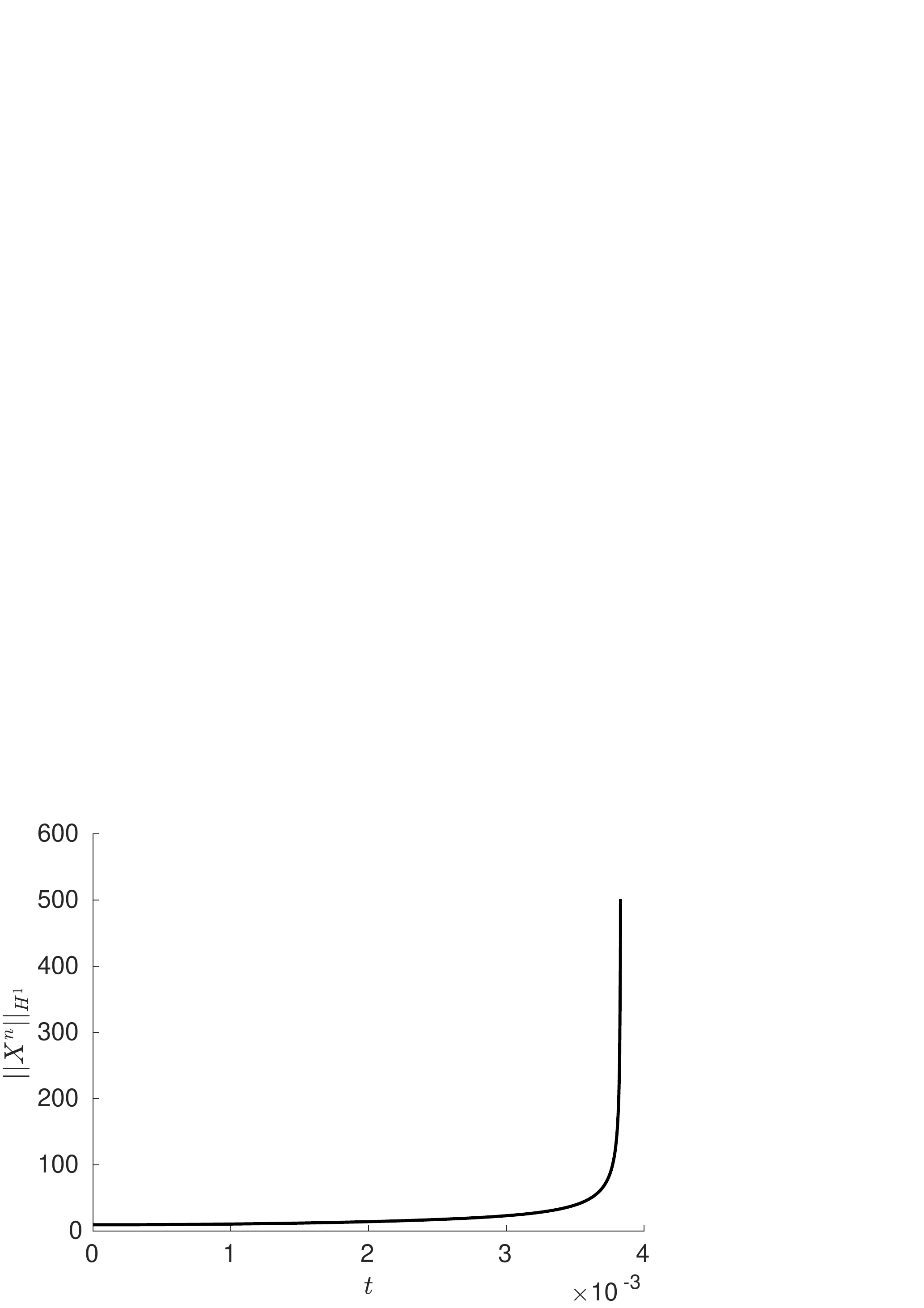}
			\end{subfigure}
			\caption{
				Evolution of $\H^1$-norms of \eqref{eq:ManakovCrit} with $\gamma=0$ and 
				$\sigma = 1, 1.25, 1.5, 1.75, 2., 2.25$ (in order left to right, top to bottom) using the initial value \eqref{IV2}.
				\label{fig:detH1new}
			}
		\end{figure}
		
		
		
		With these preliminary numerical experiments, we formulate the following two conjectures 
		regarding the Manakov equation \eqref{eq:halfManakov} in dimension $d=1$: 
		First, in the deterministic case ($\gamma=0$), blowup occurs when $\sigma\ge 2$, see left columns of 
		Figure~\ref{fig:H1HigherSigma}, Figure~\ref{fig:H1HigherSigma2L}, and Figure~\ref{fig:detH1new}. 
		Second, in the stochastic case ($\gamma>0$), blowup occurs when $\sigma>2$, see right columns 
		of Figure~\ref{fig:H1HigherSigma}, Figure~\ref{fig:H1HigherSigma2L}, and Figure~\ref{fig:H1new} 
		and Figure~\ref{fig:H1HigherSigmaBatch}.
		
		

		\section{Acknowledgement}
		This work was partially supported by the Swedish Research Council (VR) (project nr. $2018-04443$), 
		FR\"O the mobility programs of the French Embassy/Institut fran\c{c}ais de Su\`ede, and INRIA Lille Nord-Europe. 
        G. Dujardin was partially supported by the Labex CEMPI (ANR-11-LABX-0007-01).
		The computations were performed on resources provided by 
		the Swedish National Infrastructure for Computing (SNIC) 
		at HPC2N, Ume{\aa} University.
		
		\bibliographystyle{plain}
		\bibliography{labib}
		
	\end{document}